\newif\ifPDF
\newtheorem{thm}{Theorem}[section]
\newtheorem{cor}[thm]{Corollary}
\newtheorem{lem}[thm]{Lemma}
\newtheorem{prop}[thm]{Proposition}
\theoremstyle{definition}
\newtheorem{defn}[thm]{Definition}
\theoremstyle{remark}
\newtheorem{rem}[thm]{Remark}
\newtheorem{example}[thm]{Example}
\numberwithin{equation}{section}
\newcommand{\norm}[1]{\left\Vert#1\right\Vert}
\newcommand{\abs}[1]{\left\vert#1\right\vert}
\newcommand{\Real}{\mathbb R}
\newcommand{\Int}{\mathbb Z}
\newcommand{\Comp}{\mathbb C}
\newcommand{\eps}{\varepsilon}
\newcommand{\tr}{\mathrm{T}}
\begin{document}


\title{On the small boundary property, $\mathcal Z$-absorption, and Bauer simplexes}

\author{George A.~Elliott}
\address{Department of Mathematics, University of Toronto, Toronto, Ontario, Canada~\ M5S 2E4}
\email{elliott@math.toronto.edu}

\author{Zhuang Niu}
\address{Department of Mathematics and Statistics, University of Wyoming, Laramie, Wyoming 82071, USA}
\email{zniu@uwyo.edu}

\keywords{Classification of C*-Algebras, Small Boundary Property}
\subjclass{46L35, 37B02}


\begin{abstract}
Let $X$ be a metrizable compact space, and let $\Delta$ be a closed set of Borel probability measures on $X$. We study the small boundary property of the pair $(X, \Delta)$. In particular, it is shown that $(X, \Delta)$ has the small boundary property if it has a certain divisibility property. 

As an application, it is shown that, if $A$ is the crossed product C*-algebra $\mathrm{C}(X)\rtimes\mathbb Z^d$, where $(X, \mathbb Z^d)$ is a free minimal topological dynamical system, or if $A$ is an AH algebra with diagonal maps, then  $A$ is $\mathcal Z$-absorbing if the set of extreme tracial states is compact, regardless of its dimension.

\end{abstract}

\maketitle


\section{Introduction}

The small boundary property was introduced in \cite{Lindenstrauss-Weiss-MD} as a dynamical systems  analogue of the usual definition of zero dimensional space. It was shown to be equivalent to zero mean dimension (\cite{Lind-MD}; see \cite{GLT-Zk} for $\Int^d$-actions), and implies the $\mathcal Z$-stability of the crossed product C*-algebra $\mathrm{C}(X)\rtimes\Int$ (\cite{EN-MD0}). In this paper, we shall formulate the small boundary property for a pair $(X, \Delta)$ (see Definition \ref{defn-sbp}), where $X$ is a compact metrizable space and $\Delta$ is a closed set of Borel probability measures on $X$. This clearly includes the dynamical system case where $\Delta$ consists of the invariant probability measures, but also includes the examples where $\mathrm{C}(X)$ is a Cartan subalgebra of $A$ and $\Delta$ is the trace simplex of $A$. 

As suggested by the arguments of \cite{Lind-MD} and \cite{GLT-Zk},  the small boundary property can be characterized in the context of the uniform trace norm:
\begin{thm}[Theorem \ref{SBP-2-norm}]
 $(X, \Delta)$ has the (SBP) if, and only if, for any continuous real-valued function $f: X \to \Real$ and any $\eps>0$, there is a continuous real-valued function $g: X\to \Real$ such that
\begin{enumerate}
\item $\norm{f - g}_{2, \Delta} < \eps$, and 
\item $\mu(g^{-1}(0)) < \eps$ for all $\mu \in \Delta$. 
\end{enumerate} 
\end{thm}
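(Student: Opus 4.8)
The plan is to prove the two implications separately. In both directions the technical engine is a single uniformity statement, which I would isolate first: since $\Delta$ is weak*-closed and hence weak*-compact, and since $\mu \mapsto \mu(F)$ is upper semicontinuous on $\Delta$ for every closed set $F$ (portmanteau), a decreasing family of closed neighbourhoods shrinking to a $\Delta$-null closed set $B$ (one with $\mu(B)=0$ for all $\mu\in\Delta$) satisfies $\sup_{\mu\in\Delta}\mu(\,\cdot\,)\to 0$. This is a Dini-type argument run through the finite-intersection property on the compact set $\Delta$. I will use it repeatedly to convert the qualitative statement ``a boundary is $\Delta$-null'' into the quantitative statement ``a small closed collar around that boundary has uniformly small measure'', which is exactly what is needed to estimate $\norm{\cdot}_{2,\Delta}$.

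For the forward implication, assume the (SBP), so that $X$ has a basis of open sets with $\Delta$-null boundary. Given $f$ and $\eps$, I would first use this basis together with compactness of $X$ to extract a finite cover $V_1,\dots,V_n$ by open sets with $\Delta$-null boundaries on each of which $f$ oscillates by less than $\eps/3$. Disjointifying, $E_i=V_i\setminus\bigcup_{j<i}V_j$ gives a finite Borel partition whose total boundary $B=\bigcup_i\partial E_i\subseteq\bigcup_i\partial V_i$ is $\Delta$-null. I would then let $g$ be a nonzero constant $c_i$ on the part of $E_i$ lying outside a closed $\delta$-collar of $B$, choosing $c_i$ within $\eps/3$ of the values of $f$ on $E_i$ and bounded away from $0$ at the cost of a further error at most $\eps/3$, and interpolating continuously across the collar. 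By construction $g^{-1}(0)$ lies in the closed $\delta$-collar of $B$, so the Dini step makes $\sup_{\mu}\mu(g^{-1}(0))<\eps$ once $\delta$ is small, giving $(2)$; and $\norm{f-g}_{2,\Delta}$ splits into a part bounded by $\eps/3+\eps/3$ off the collar and a part bounded by $(2\norm{f}_\infty+\eps)\,(\sup_{\mu}\mu(\text{collar}))^{1/2}$ on the collar, the latter being $<\eps/3$ after shrinking $\delta$, giving $(1)$.

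For the reverse implication, assume the approximation property and fix a point $x$ with a neighbourhood $U$. I would take a test function $f$ equal to $-1$ on a neighbourhood of $x$ and to $+1$ off $U$, apply the hypothesis to obtain $g$ with $\norm{f-g}_{2,\Delta}<\eps$ and $\sup_{\mu}\mu(g^{-1}(0))<\eps$, and consider the sublevel set $V=\set{g<0}$, whose boundary lies in the $\Delta$-small closed set $g^{-1}(0)$. I expect the \emph{main obstacle} to appear exactly here: the estimate $\norm{f-g}_{2,\Delta}<\eps$ controls $g$ only in measure and not pointwise, so it does not by itself force $x\in V$ nor $V\subseteq U$, whereas the definition of the (SBP) demands a genuine topological neighbourhood. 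To surmount this I would not commit to the single level $0$, but instead select a good threshold $t$ near $0$: using a coarea-type bookkeeping for the pushforwards $g_*\mu$ together with the Dini uniformity one expects to locate $t$ for which $\set{g=t}$ is $\Delta$-null, and then to pin down the topological position by intersecting with $U$ and exploiting that the family of open sets with $\Delta$-null boundary is closed under finite intersections and unions, so that these sets can be assembled into an honest basis. The delicate point throughout is precisely this passage from the measure-theoretic smallness delivered by $\norm{\cdot}_{2,\Delta}$ to the topological requirement in the definition of the (SBP), and I anticipate that essentially all of the real work lies in making that step simultaneous with the $\Delta$-nullity of the chosen boundary.
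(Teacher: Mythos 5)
Your forward direction is essentially the paper's argument (a finite cover with $\Delta$-null boundaries on which $f$ oscillates little, a subordinate partition whose transition region has uniformly small measure, and the uniform-smallness lemma for shrinking closed neighbourhoods of a $\Delta$-null set, which is Lemma \ref{lem-small-nbhd}); it is correct. The gap is in the reverse direction, at exactly the obstacle you flag. Your proposed remedy --- selecting a good threshold $t$ and assembling null-boundary open sets into a basis --- does not work, for two reasons. First, no choice of level repositions $\{g<t\}$: the hypothesis $\norm{f-g}_{2,\Delta}<\eps$ permits $g$ to equal, say, $+1$ on an entire neighbourhood of $x$ provided that neighbourhood is $\Delta$-null (which it typically is), and then $\{g<t\}$ misses $x$ for every $t$ near $0$; the defect is in the function, not in the level. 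Second, a single $t$ with $\mu(g^{-1}(t))=0$ for \emph{all} $\mu\in\Delta$ simultaneously need not exist: for each fixed $\mu$ the atoms of $g_*\mu$ are countable, but the union over $\mu\in\Delta$ can exhaust the range of $g$ (e.g.\ when $\Delta$ contains many point masses), so the coarea bookkeeping does not close. Finally, the ``assemble into a basis'' step is circular: stability of null-boundary open sets under finite unions and intersections only helps once you already know every point has arbitrarily small such neighbourhoods, which is what is to be proved.

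What is actually needed is a repair of $g$ itself, and this is the essential content of the theorem beyond its sup-norm analogue. The paper does it in Lemma \ref{ta-na}: by Markov's inequality the set $E=\{|f-g|>\eps/2\}$ satisfies $\mu(E)\le 4\eps^{-2}\norm{f-g}_{2,\Delta}^{2}$, which is uniformly small over $\Delta$ if one starts from $\norm{f-g}_{2,\Delta}^{2}<\eps^{3}/8$; setting $g'=hg+(1-h)f$ for a cutoff $h$ equal to $1$ on $E^{c}$ and supported where $|f-g|<\eps$ gives $\norm{f-g'}_\infty<\eps$ while $(g')^{-1}(0)\subseteq g^{-1}(0)\cup E$ still has measure $<\eps$. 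This reduces the statement to the sup-norm version (Lemma \ref{pre-rr0}), where the remaining issue --- upgrading ``$\mu(g^{-1}(0))<\eps$ for all $\mu$'' to an exactly $\Delta$-null boundary --- is handled not by varying the level but by a Baire category argument in $\mathrm{C}_\Real(X)$: the sets $Z_c=\{g:\mu(g^{-1}(0))<c\ \text{for all}\ \mu\in\Delta\}$ are open (again by Lemma \ref{lem-small-nbhd}) and dense, so $\bigcap_n Z_{1/n}$ is dense, and for $g$ in this intersection sup-norm close to a test function separating $x$ from $X\setminus U$, the set $\{g\le 0\}$ is the desired closed neighbourhood with $\Delta$-null boundary. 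Both of these steps --- the Markov interpolation and the Baire upgrade --- are missing from your outline.
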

This property of $\mathrm{C}(X)$ (with respect to $\Delta$) is similar to the property of real rank zero of $\mathrm{C}(X)$, which, for an arbitrary C*-algebra, asserts that any self-adjoint element can be approximated by invertible self-adjoint elements. Indeed, the (SBP) can be characterized by zero real rank of the sequence algebra of $\mathrm{C}(X)$ modulo the trace kernel of $\Delta$ (Theorem \ref{RR0=SBP}; see also Theorem \ref{RR0=SBP-X}).

In order to apply this theorem, consider the following property, reminiscent of uniform property $\Gamma$ (Definition 2.1 of \cite{CETW-Gamma}). 
\begin{defn}[Definition \ref{Definition-WGamma}]
The pair $(X, \Delta)$ will be said to be approximately divisible if there is $K>0$ such that for each  $n \in \mathbb N$, there is a partition of unity (mutually orthogonal projections with sum $1$) $$p_1, p_2, ..., p_n \in \ell^\infty(D)/J_{2, \omega, \Delta}$$ such that
$$\tau(p_iap_i) \leq \frac{1}{n} K \tau(a),\quad a\in D^+,\ \tau\in \Delta_\omega,$$
where $D = \mathrm{C}(X)$.
\end{defn}
This property turns out to imply the (SBP): 
\begin{thm}[Theorem \ref{Gamma2SBP}]\label{thm13-int}
If $(X, \Delta)$ is approximately divisible, then $(X, \Delta)$ has the (SBP).
\end{thm}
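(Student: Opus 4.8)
The plan is to verify the uniform-trace-norm characterization of the (SBP) from Theorem~\ref{SBP-2-norm}: given a continuous real-valued $f$ and $\eps>0$, I must produce a continuous real-valued $g$ with $\norm{f-g}_{2,\Delta}<\eps$ and $\mu(g^{-1}(0))<\eps$ for every $\mu\in\Delta$. The geometric idea is that approximate divisibility supplies, for large $n$, a partition of unity whose pieces are \emph{equidistributed} against every measure in $\Delta$, so that each piece captures at most a $K/n$ fraction of the mass of any Borel set; shifting $f$ by a different small constant on each piece then splits every level set of $f$ into $n$ nearly equal parts, and the perturbed function can vanish on only one of those parts over any given level.

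First I would fix $n$ with $K/n<\eps$ and invoke approximate divisibility to get projections $p_1,\dots,p_n\in\ell^\infty(D)/J_{2,\omega,\Delta}$ with $\tau(p_iap_i)\le\frac{K}{n}\tau(a)$ for $a\in D^+$ and $\tau\in\Delta_\omega$. Since $D=\mathrm{C}(X)$ is commutative, $p_iap_i=p_ia$ in the quotient, so this is a genuine equidistribution statement. Lifting the $p_i$ to a representing sequence of continuous functions $h_i^{(k)}\ge 0$ that form an approximate partition of unity with approximately orthogonal summands, the defining inequalities translate, for a suitable index $k$, into uniform estimates over $\mu\in\Delta$: for every continuous $0\le a\le 1$ one has $\int a\,h_i^{(k)}\,d\mu\le\frac{K}{n}\int a\,d\mu+o(1)$, and, approximating indicator functions, $\mu(B\cap\{h_i^{(k)}\approx 1\})\le\frac{K}{n}\mu(B)+o(1)$ for Borel $B$, uniformly in $\mu$.

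Next I would choose the shifts. Let $c_1,\dots,c_n$ be equally spaced in $(-\eps,\eps)$ and set $\delta=\eps/(3n)$, so that the intervals $J_i=(-c_i-\tfrac{3\delta}{2},-c_i+\tfrac{3\delta}{2})$ are pairwise disjoint; put $g_k=f+\sum_i c_ih_i^{(k)}$, which is continuous and real-valued. For the norm estimate, approximate orthogonality together with $\int (h_i^{(k)})^2\,d\mu\le\int h_i^{(k)}\,d\mu$ and $\sum_i\int h_i^{(k)}\,d\mu\to 1$ gives $\norm{f-g_k}_{2,\mu}^2\approx\sum_i c_i^2\int (h_i^{(k)})^2\,d\mu\le\max_i c_i^2<\eps^2$ for $k$ large, uniformly in $\mu$. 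For the zero-set estimate, on the region where $h_i^{(k)}\approx 1$ one has $g_k\approx f+c_i$, so $\set{|g_k|<\delta}$ meets that region only inside $f^{-1}(J_i)$; summing over $i$, using equidistribution on $B=f^{-1}(J_i)$ and the disjointness of the $J_i$, yields $\mu(\set{|g_k|<\delta})\le\frac{K}{n}\sum_i\mu(f^{-1}(J_i))+o(1)\le\frac{K}{n}+o(1)$, uniformly in $\mu\in\Delta$. Since $g_k^{-1}(0)\subseteq\set{|g_k|<\delta}$, for $k$ large this forces $\mu(g_k^{-1}(0))\le K/n<\eps$, and $g=g_k$ is the desired function.

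I expect the main obstacle to be the passage from the abstract projections $p_i\in\ell^\infty(D)/J_{2,\omega,\Delta}$ to concrete continuous functions for which both estimates hold \emph{simultaneously and uniformly} over the compact set $\Delta$. One must select lifts that are at once an approximate partition of unity, approximately orthogonal in $\norm{\cdot}_{2,\Delta}$, and satisfy the equidistribution inequality for all $\mu\in\Delta$ together; making the heuristic ``$g_k\approx f+c_i$ on the $i$-th piece'' precise then requires controlling the cross terms from the overlaps of the $h_i^{(k)}$ and the fact that they are only approximately $\set{0,1}$-valued. All of these errors are $o(1)$ along the representing sequence and vanish in the $\omega$-limit, which is exactly why one works in the sequence algebra rather than in $\mathrm{C}(X)$ directly.
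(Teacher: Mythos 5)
Your argument is correct, and at the decisive step it takes a genuinely different route from the paper. Both proofs run through the $\norm{\cdot}_{2,\Delta}$-criterion of Theorem \ref{SBP-2-norm}, and both end up with an element of the form $\sum_i p_i\overline{(f_i)}p_i$ built from the divisibility partition; the difference is how the perturbations $f_i$ of $f$ are produced. The paper invokes Corollary \ref{mult-pert}, which rests on a nerve-complex perturbation lemma (Lemma 5.7 of \cite{GLT-Zk}) to arrange that at each $x$ at most an $\eps$-fraction of the $f_i$ are $\delta$-close to $0$. You instead take $f_i=f+c_i$ with distinct constants $c_i\in(-\eps,\eps)$ spaced farther apart than $3\delta$, so that the sets $f^{-1}(J_i)=\set{|f_i|<\tfrac{3\delta}{2}}$ are pairwise disjoint and at most \emph{one} $f_i$ can be small at any point --- an entirely elementary replacement that gives a cleaner count ($1/n$ instead of $\eps$) and avoids the simplicial machinery; the cost is that you carry out the bookkeeping at the level of representing sequences rather than in the quotient algebra, which is the source of the issues you flag at the end. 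Two of those deserve explicit care in a write-up. First, for ``$g_k\approx f+c_i$ where $h_i^{(k)}\approx 1$'' you need $h_j^{(k)}\approx 0$ for $j\neq i$ on that region; this is automatic if you normalize the positive lifts so that $\sum_j h_j^{(k)}\le 1$ pointwise (e.g.\ divide by $\max(1,\sum_j h_j^{(k)})$), and the exceptional set where no $h_i^{(k)}$ is within $\eta$ of $1$ is controlled uniformly in $\mu$ by $\sum_i\int\bigl(h_i^{(k)}-(h_i^{(k)})^2\bigr)\,d\mu\to 0$. Second, the statement ``$\mu(B\cap\set{h_i^{(k)}\approx 1})\le\frac{K}{n}\mu(B)+o(1)$ for Borel $B$'' is not available for arbitrary Borel $B$; it should be applied only to the open sets $f^{-1}(J_i)$, majorized by continuous functions supported in slightly larger, still pairwise disjoint sets $f^{-1}(J_i')$, so that the sum over $i$ is bounded by $\frac{K}{n}\sum_i\mu(f^{-1}(J_i'))\le\frac{K}{n}$. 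With these adjustments every error term is $o(1)$ along the ultrafilter, finitely many such conditions hold simultaneously for $k$ in a set belonging to $\omega$, and a single such $k$ yields the required $g=g_k$.
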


In the case that $\mathrm{C}(X)$ is the canonical subalgebra of $A=\mathrm{C}(X)\rtimes\Int^d$, where $\Int^d$ acts minimally and freely on $X$, or in the case that  $\mathrm{C}(X)$ is the diagonal subalgebra of $A$, where $A$ is an AH algebra with diagonal maps, if $\Delta = \mathrm{T}(A)$ is a Bauer simplex (i.e., $\partial \Delta$ is compact), then $(X, \Delta)$ is approximately divisible (see Lemma \ref{Bauer-case} below). Therefore by Theorem \ref{thm13-int}, it must have the (SBP). As we shall show, the C*-algebra $A$ in these cases must be $\mathcal Z$-absorbing  (Theorem \ref{Bauer-SBP} and Proposition \ref{AH-Z}). Compared to the literature where Bauer simplexes are used in the classification of C*-algebras (for instance, \cite{KR-CenSeq}, \cite{TWW-Z}, and \cite{Sato-CP}), we do not use W*-bundles, and we make no assumption on the topological dimension of $\partial \Delta$.

\begin{thm}[cf.~Theorems \ref{Bauer-SBP} and \ref{Bauer-SBP-AH}]
Let $A$ be a C*-algebra as above. If $\mathrm{T}(A)$ is a Bauer simplex, then $(X, \Delta)$ has the (SBP) and $A \cong A\otimes\mathcal Z$, where $\mathcal Z$ is the Jiang-Su algebra.
\end{thm}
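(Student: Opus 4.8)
The plan is to chain together the results already assembled in the introduction. The statement asserts two things for $A = \mathrm{C}(X)\rtimes\Int^d$ (free minimal) or $A$ an AH algebra with diagonal maps, under the hypothesis that $\mathrm{T}(A)$ is a Bauer simplex: first that $(X,\Delta)$ has the (SBP), and second that $A \cong A \otimes \mathcal Z$. The first conclusion is the engine that drives the second.

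First I would verify that the Bauer hypothesis feeds into the divisibility machinery. The excerpt tells us (Lemma \ref{Bauer-case}) that when $\Delta = \mathrm{T}(A)$ is a Bauer simplex, the pair $(X, \Delta)$ is approximately divisible in the sense of Definition \ref{Definition-WGamma}. The key point here is that Bauerness means $\partial\Delta$ is compact, so one has a genuine continuous field of extreme traces over a compact base; this is precisely what lets one build, for each $n$, the required partition of unity $p_1,\dots,p_n$ in $\ell^\infty(D)/J_{2,\omega,\Delta}$ with the uniform estimate $\tau(p_i a p_i) \le \frac{1}{n}K\tau(a)$. I would treat this approximate divisibility as established by \ref{Bauer-case}. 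Then Theorem \ref{thm13-int} (= Theorem \ref{Gamma2SBP}) applies verbatim: approximate divisibility implies the (SBP). This gives the first half of the conclusion with essentially no new work beyond invoking the two cited results.

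For the second half, I would pass from the (SBP) of $(X,\Delta)$ to $\mathcal Z$-stability of $A$ via the application theorems flagged in the introduction, namely Theorem \ref{Bauer-SBP} in the crossed-product case and Proposition \ref{AH-Z} (resp.\ Theorem \ref{Bauer-SBP-AH}) in the AH case. The conceptual content is that $\mathrm{C}(X)$ sits inside $A$ as a distinguished abelian subalgebra (the canonical subalgebra for the crossed product, the diagonal for the AH algebra) whose relative position controls the comparison/divisibility behavior of $A$ at the level of traces. The (SBP), by Theorem \ref{SBP-2-norm}, says that self-adjoint functions can be perturbed in uniform $2$-norm over $\Delta$ to have small-measure zero sets; transported into $A$ this yields the tracial smallness one needs to produce the order-zero maps from a matrix algebra (or the requisite central sequences) witnessing $\mathcal Z$-absorption. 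I would cite the relevant $\mathcal Z$-stability criterion — for the crossed-product case one expects to invoke the mean-dimension-zero / (SBP) route of \cite{EN-MD0}, upgraded to $\Int^d$, and in the AH case the diagonal structure directly.

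The main obstacle, and the step I would expect to require the most care, is the transfer from the (SBP) of the commutative pair $(X,\Delta)$ to genuine $\mathcal Z$-stability of the noncommutative algebra $A$ in the $d>1$ and higher-dimensional $\partial\Delta$ regimes, where the usual W*-bundle technology is unavailable. The delicate point is that one is deliberately making no assumption on the topological dimension of $\partial\Delta$, so the classical arguments that trivialize the trace bundle fiberwise cannot be used; instead the (SBP) must supply, uniformly over all of $\Delta$ at once, the tracial divisibility needed to embed a dimension-drop algebra with controlled central behavior. I would isolate this as the technical heart, handling the crossed-product and AH cases by their respective structural inputs, and otherwise regard the theorem as a corollary of \ref{Bauer-case}, \ref{thm13-int}, \ref{Bauer-SBP}, and \ref{AH-Z}.
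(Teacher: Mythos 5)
Your overall architecture matches the paper's: Bauer $\Rightarrow$ approximate divisibility $\Rightarrow$ (SBP) via Theorem \ref{Gamma2SBP}, and then (SBP) $\Rightarrow$ $\mathcal Z$-absorption via Theorem 4.8 of \cite{Niu-MD-Z-absorbing} in the crossed-product case and via Proposition \ref{AH-Z} (local slow dimension growth) in the AH case. But there is a concrete gap in your first step. You write that Bauerness ``is precisely what lets one build, for each $n$, the required partition of unity $p_1,\dots,p_n$.'' That is not what Theorem \ref{Bauer-case} (or \ref{Bauer-case-A-free}) says, and it is false as stated: the Bauer hypothesis does not produce the partition of unity. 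For example, if $\Delta=\{\delta_x\}$ is a single point mass, then $\partial\Delta$ is compact, yet $\ell^\infty(\mathrm{C}(X))/J_{2,\omega,\Delta}$ admits no nontrivial orthogonal projections of small trace, so $(X,\Delta)$ is not approximately divisible. The partition of unity with $\tau(p_i)=\tfrac1n$ and approximate centrality must come from the structure of $A$ itself: in the crossed-product case from the (URP) together with the F{\o}lner tilings of \cite{DHZ-tiling} and Lemma 7.1 of \cite{LN-sr1}; in the AH case from the fact that the matrix ranks $n_{i,j}$ of the building blocks are unbounded, so one can split the diagonal into $n$ blocks of nearly equal trace. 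You never mention either input, and without one of them the argument does not start.

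What the Bauer hypothesis actually buys --- and this is the point of Theorems \ref{Bauer-case-A-free} and \ref{Bauer-case} --- is the upgrade from the weak condition $\tau(p_i)=\tfrac1n$ to the divisibility condition $\tau(p_iap_i)=\tfrac1n\tau(a)$: compactness of $\partial\mathrm{T}(A)$ allows one to test against extreme traces uniformly, and for an extreme trace the factorization $\tau(ab_k)\approx\tau(a)\tau(b_k)$ (Lemma \ref{extreme-cond}; Proposition 3.1 of \cite{CETW-Gamma}) converts $\tau(p_i)=\tfrac1n$ into $\tau(p_iap_i)=\tfrac1n\tau(a)$. You have the roles of the two hypotheses interchanged. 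Your second half (from (SBP) to $\mathcal Z$-stability) is cited correctly in outline, though you identify it as the technical heart when in the paper it is handled by quoting \cite{Niu-MD-Z-absorbing} and by the slow-dimension-growth argument of Proposition \ref{AH-Z}; the genuinely new step is the one you skipped over.
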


\subsection*{Acknowledgements}
The results of this note were reported in the Operator Algebra Seminar at the Fields Institute in July, 2023 (\cite{Fields-OAS}). The authors thank the Fields Institute for support. The research of the first named author was supported by a Natural Sciences and Engineering Research Council of Canada (NSERC) Discovery Grant, and the research of the second named author was supported by a U.S.~National Science Foundation grant (DMS-1800882) and a Simons Foundation grant (MP-TSM-00002606).

\section{Small boundary property}

\subsection{Small boundary property}
%

\begin{lem}\label{SBP-conds}
Let $X$ be a compact metrizable space, and let $\Delta$ be a set of Borel probability measures.  The following properties are equivalent:
\begin{enumerate}
\item for any subsets $V \subseteq U$, where $V$ is closed and $U$ is open, there is a closed set $V'$ such that $V \subseteq V'\subseteq U$, and $\mu(\partial V') = 0$, for all $\mu\in\Delta$.

\item for any subsets $V \subseteq U$, where $V$ is closed and $U$ is open, there is an open set $V'$ such that $V \subseteq V'\subseteq U$, and $\mu(\partial V') = 0$, for all $\mu\in\Delta$.

\item for any $x \in X$ and any open set $U \ni x$, there is a closed neighbourhood $V \subseteq U$ of $x$ such that $\mu(\partial V) = 0$, for all $\mu\in\Delta$.

\item for any $x \in X$ and any open set $U \ni x$, there is an open neighbourhood $V \subseteq U$ of $x$ such that $\mu(\partial V) = 0$, for all $\mu\in\Delta$.

\end{enumerate}
\end{lem}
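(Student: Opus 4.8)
The plan is to exploit three elementary facts about boundaries in a normal space, together with the compactness of $X$, and to use these to pass freely between the closed and open formulations and between the local and global formulations. The facts are: first, that $\partial A = \partial(X \setminus A)$ for any set $A$, so that complementation interchanges a set and its complement without altering the boundary; second, that $\partial(\overline A) \subseteq \partial A$ and $\partial(\mathrm{int}\,A) \subseteq \partial A$, so that passing to the closure or the interior can only shrink the boundary (and in particular preserves the property of having $\mu$-measure zero for every $\mu \in \Delta$); and third, that $\partial\big(\bigcup_{i=1}^n A_i\big) \subseteq \bigcup_{i=1}^n \partial A_i$, so that a finite union of sets with null boundary again has null boundary. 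Throughout I will freely use that a compact metrizable space is normal and regular, so that given a point or a closed set inside an open set $U$ one can interpose an open set whose closure is still contained in $U$.

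For the equivalence $(1)\Leftrightarrow(2)$ I would argue by complementation. Given the closed--open pair $V \subseteq U$, apply the hypothesised version to the pair $X\setminus U \subseteq X\setminus V$ (closed inside open), obtaining a set $W$ of the complementary type with $X\setminus U \subseteq W \subseteq X\setminus V$ and $\mu(\partial W)=0$ for all $\mu\in\Delta$; then $V' := X\setminus W$ lies between $V$ and $U$, has the required type, and satisfies $\partial V' = \partial W$, hence $\mu(\partial V') = 0$. The same device yields the reverse implication. For $(3)\Leftrightarrow(4)$ I would use the closure and interior facts: from a closed neighbourhood $V$ of $x$ with null boundary, the interior $\mathrm{int}\,V$ is an open neighbourhood of $x$ contained in $U$ with null boundary; conversely, after first shrinking $U$ to an open set $U'$ with $x \in U' \subseteq \overline{U'} \subseteq U$ and producing an open neighbourhood $V \subseteq U'$, the closure $\overline V$ is a closed neighbourhood of $x$ with $\overline V \subseteq \overline{U'}\subseteq U$ and null boundary.

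It then remains to bridge the local and global conditions, for which I would prove $(1)\Leftrightarrow(3)$. The implication $(1)\Rightarrow(3)$ is immediate from regularity: given $x \in U$, choose an open set $W$ with $x \in W \subseteq \overline W \subseteq U$ and apply $(1)$ to the pair $\overline W \subseteq U$ to obtain a closed set $V'$ with $\overline W \subseteq V' \subseteq U$ and null boundary; since $W \subseteq V'$, the set $V'$ is a closed neighbourhood of $x$. The substantive direction is $(3)\Rightarrow(1)$, which is where the main work lies: given a closed set $V$ inside an open set $U$, I would apply $(3)$ at each point $x \in V$ to get a closed neighbourhood $V_x \subseteq U$ of $x$ with $\mu(\partial V_x)=0$, observe that the interiors $\mathrm{int}\,V_x$ form an open cover of the compact set $V$, extract a finite subcover indexed by $x_1,\dots,x_n$, and set $V' := \bigcup_{i=1}^n V_{x_i}$. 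Then $V'$ is closed, $V \subseteq V' \subseteq U$, and by the union bound on boundaries $\mu(\partial V') \leq \sum_{i=1}^n \mu(\partial V_{x_i}) = 0$ for every $\mu \in \Delta$. The only genuine obstacle is this compactness step---assembling the locally chosen neighbourhoods into a single closed set that separates $V$ from the complement of $U$ while keeping the total boundary null---and it is handled cleanly by the finite-union boundary estimate.
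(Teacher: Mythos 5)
Your proof is correct, and it uses the same essential ingredients as the paper's: normality of $X$ to interpose open sets whose closures stay inside $U$, compactness of $V$ to extract a finite subcover, monotonicity of the boundary under passing to closures and interiors, and subadditivity of the boundary under finite unions. The organization, however, is genuinely different. The paper runs the single cycle $(1)\Rightarrow(2)\Rightarrow(3)\Rightarrow(4)\Rightarrow(1)$, getting $(1)\Rightarrow(2)$ and $(3)\Rightarrow(4)$ in one line from $\partial(\mathrm{int}\,W)\subseteq\partial W$ for closed $W$; you instead establish the closed/open equivalences $(1)\Leftrightarrow(2)$ and $(3)\Leftrightarrow(4)$ separately and then bridge the global and local forms via $(1)\Leftrightarrow(3)$. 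Your complementation device for $(1)\Leftrightarrow(2)$, based on $\partial A=\partial(X\setminus A)$, is a slightly more robust argument than the interior-of-closure route: it produces a set of the complementary type that automatically still contains $V$, whereas deducing $(1)\Rightarrow(2)$ from $V'\mapsto\mathrm{int}(V')$ requires first thickening $V$ to an open set with closure in $U$ so that $V\subseteq\mathrm{int}(V')$ is guaranteed --- a step the paper leaves implicit. Your compactness step $(3)\Rightarrow(1)$ is essentially the paper's $(4)\Rightarrow(1)$, except that you cover $V$ by the interiors of closed neighbourhoods rather than by open sets whose closures you then take, which also lets you dispense with the preliminary shrinking of $U$ to $U'$. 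Both routes are complete and of comparable length; yours trades the paper's economical cycle for implications that are individually easier to check.
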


\begin{proof}
Since $\overline{\mathrm{int}(W)}\setminus \mathrm{int}(W) \subseteq \partial W=W \setminus \mathrm{int}(W)$ for any closed set $W$, (1)$\Rightarrow$(2) and (3)$\Rightarrow$(4) are immediate. 

For (2)$\Rightarrow$(3), choose an open set $U'$ with $\overline{U'} \subseteq U$. Since $\{x\}$ is closed, by $(2)$, there is an open set $V \ni x$ such that $V \subseteq U'$ and $\mu(\partial V) = 0$ for all $\mu \in \Delta$. Then the closed neighbourhood  $\overline{V}$ of $x$ satisfies (3).

For (4)$\Rightarrow$(1), choose an open set $U'$ with $V \subseteq U'$ and $\overline{U'} \subseteq U$. Since $V$ is compact, by (4), there is an open cover $\{V_1, V_2, ..., V_n\}$ of $V$ such that $V_i\subseteq U'$, $i=1, ..., n$ and $\mu(\partial V_i) = 0$, $i=1, ..., n$, and $\mu\in\Delta$. Then $V':=\overline{V_1} \cup\cdots \cup \overline{V_n}$ satisfies (1).
\end{proof}

\begin{defn}\label{defn-sbp}
Let $X$ be a metrizable compact space, fixed for the rest of the paper, and let $\Delta$ be a closed set of Borel probability measures on $X$. Then the pair $(X, \Delta)$ (or the pair $(\mathrm{C}(X), \Delta)$, where $\Delta$ is regarded as as closed set of tracial state of $\mathrm{C}(X)$) is said to have the Small Boundary Property (SBP) if the equivalent conditions  of Lemma \ref{SBP-conds} are satisfied.
\end{defn}

\begin{lem}\label{lem-small-nbhd}
Let $\Delta$ be a closed set of Borel probability measures on $X$. Let $c\geq 0$, and let $V\subseteq X$ be a closed set with $\mu(V) < c$ for all $\mu\in\Delta$. Then there is an open set $U\supseteq V$ such that $\mu(U) < c$ for all $\mu\in\Delta$. 
\end{lem}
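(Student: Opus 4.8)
The plan is to exploit the weak-$*$ compactness of $\Delta$ in order to upgrade the pointwise estimate $\mu(V) < c$ to a uniform estimate on a single fixed open neighbourhood of $V$. Since $X$ is compact metrizable, fix a compatible metric $d$, and for $\delta > 0$ write $V_\delta = \{x \in X : d(x,V) < \delta\}$ for the open $\delta$-neighbourhood of $V$ and $\overline{V}_\delta = \{x : d(x,V) \leq \delta\}$ for its closed counterpart. Because $V$ is closed, $\bigcap_{\delta > 0}\overline{V}_\delta = V$, so continuity from above of each $\mu$ along the decreasing sequence $(\overline{V}_{1/n})_n$ gives $\mu(\overline{V}_{1/n}) \to \mu(V)$ as $n \to \infty$.

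First I would interpolate with continuous functions so that the dependence on $\mu$ becomes weak-$*$ continuous rather than merely semicontinuous. For each $n$ set
\[
g_n(x) = \max\bigl(0,\ \min(1,\ 2 - n\,d(x,V))\bigr),
\]
a continuous function on $X$ satisfying $\mathbf{1}_{\overline{V}_{1/n}} \leq g_n \leq \mathbf{1}_{\overline{V}_{2/n}}$ and non-increasing in $n$. Then $\mu \mapsto \int g_n\,d\mu$ is weak-$*$ continuous on $\Delta$, and the sandwiching together with $\mu(\overline{V}_{2/n}) \to \mu(V)$ forces $\int g_n\,d\mu \to \mu(V) < c$ for every $\mu \in \Delta$.

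Next I would package this as an open covering argument. Put $O_n = \{\mu \in \Delta : \int g_n\,d\mu < c\}$; each $O_n$ is open in $\Delta$ by continuity, and since $g_n$ is non-increasing in $n$ the sets $O_n$ are increasing. The convergence above shows $\bigcup_n O_n = \Delta$. Here is where compactness enters: $\Delta$, being a weak-$*$ closed subset of the weak-$*$ compact space of Borel probability measures on $X$, is itself weak-$*$ compact, so the increasing open cover $\{O_n\}$ must already cover $\Delta$ at a single stage, i.e.\ $O_N = \Delta$ for some $N$. Thus $\int g_N\,d\mu < c$ for all $\mu \in \Delta$.

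Finally I would read off the desired neighbourhood. Take $U = V_{1/N}$, which is open and contains $V$. Since $U \subseteq \overline{V}_{1/N}$ and $g_N = 1$ on $\overline{V}_{1/N}$, we have $\mathbf{1}_U \leq g_N$ pointwise, whence $\mu(U) \leq \int g_N\,d\mu < c$ for every $\mu \in \Delta$, as required. The main obstacle is precisely the passage from the given pointwise bound to a uniform one over all of $\Delta$; the continuous approximants $g_n$ are chosen so that the level sets $O_n$ are genuinely open, which is what lets the compactness of $\Delta$ do its work. One could alternatively dispense with the $g_n$ and argue directly that $\mu \mapsto \mu(\overline{V}_{1/n})$ is upper semicontinuous (by the Portmanteau theorem), so that $\{\mu : \mu(\overline{V}_{1/n}) < c\}$ is open, but the continuous version keeps everything at the level of ordinary weak-$*$ continuity.
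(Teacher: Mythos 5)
Your proof is correct, and it reaches the conclusion by a genuinely different route from the paper's. The paper argues by contradiction: it takes a decreasing sequence of open sets $U_i$ with $\overline{U_{i+1}}\subseteq U_i$ and $\bigcap_i U_i=V$, extracts measures $\mu_i\in\Delta$ with $\mu_i(U_i)\geq c$, passes to a weak-$*$ convergent subsequence $\mu_i\to\mu_\infty\in\Delta$, and then uses the portmanteau-type upper semicontinuity $\mu_\infty(\overline{U_{i+1}})\geq\limsup_k\mu_k(\overline{U_{i+1}})$ together with continuity from above to force $\mu_\infty(V)\geq c$, a contradiction. You instead argue directly: the continuous interpolants $g_n$ sandwiched between $\mathbf{1}_{\overline{V}_{1/n}}$ and $\mathbf{1}_{\overline{V}_{2/n}}$ make $\mu\mapsto\int g_n\,d\mu$ genuinely weak-$*$ continuous, so the sets $O_n$ form an increasing open cover of the compact set $\Delta$, which must stabilize at some finite stage $N$, and $U=V_{1/N}$ then works. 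Both proofs rest on the same two pillars --- compactness of $\Delta$ and continuity from above along shrinking neighbourhoods of $V$ --- but your version trades sequential compactness plus semicontinuity of $\mu\mapsto\mu(F)$ for an open-cover argument with continuous test functions; this keeps everything at the level of ordinary weak-$*$ continuity (no portmanteau input needed) and produces the neighbourhood constructively rather than by contradiction. The details all check out: $g_n=1$ on $\overline{V}_{1/n}$ and $g_n=0$ off $\overline{V}_{2/n}$, monotonicity of $g_n$ in $n$ makes the $O_n$ nested, and $\mathbf{1}_{V_{1/N}}\leq g_N$ closes the loop.
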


\begin{proof}
Assume the statement were not true. Then there would exist a decreasing sequence  of open sets $(U_i)$ with $\overline{U_{i+1}} \subseteq U_i$ and $\bigcap_{i=1}^\infty U_i = V$ and a sequence $(\mu_i)$ in $\Delta$ such that 
$\mu_i(U_i) \geq c $ for all $i=1, 2, ...$ (and hence $\mu_k(U_i) \geq c$ for all $k\geq i$, $k$, $i=1, 2, ...$). Since $\Delta$ is closed (and so compact), passing to a subsequence, one may assume that $(\mu_i)$ converges to a measure $\mu_\infty\in\Delta$. Then, since $$\mu_\infty(U_i) \geq \mu_\infty(\overline{U_{i+1}})\geq \limsup_{k\to\infty}\mu_k(\overline{U_{i+1}}) \geq \limsup_{k\to\infty}\mu_k(U_{i+1})  \geq c,$$
and $\mu_\infty \in \Delta$, one has $$c> \mu_\infty(V) = \mu_\infty(\bigcap_{i=1}^\infty U_i) = \lim_{i\to\infty} \mu_\infty(U_i)\geq c,$$ which is absurd.
\end{proof}

The following lemma is Proposition 5.3 of \cite{Lindenstrauss-Weiss-MD}, which was stated for dynamical systems in terms of orbit capacity. For the reader's convenience,  we include the proof here. 
\begin{lem}[Proposition 5.3 of \cite{Lindenstrauss-Weiss-MD}]\label{unity-sbp}
If $(X, \Delta)$ has the (SBP), then, for every open cover $\alpha$ of $X$ and every $\eps>0$, there is a subordinate partition of unity $\phi_i: X \to [0, 1]$, $i=1, 2, ..., \abs{\alpha}$, such that
\begin{enumerate}
\item $\sum_{i=1}^{\abs{\alpha}} \phi_i(x) = 1$, $x\in X$,
\item $\mathrm{supp}(\phi_i) \subseteq U$ for some $U\in \alpha$, and $i=1, 2, ..., \abs{\alpha}$, and
\item $\mu(\bigcup_{i=1}^{\abs{\alpha}}\phi_i^{-1}(0, 1))<\eps$ for all $\mu\in\Delta$.
\end{enumerate}
\end{lem}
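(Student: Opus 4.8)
The plan is to reduce the construction of the partition of unity to a combinatorial problem about Borel sets, and then to \emph{round off} the resulting indicator functions into genuine continuous functions whose fractional region is confined to a set of small measure. Write $\alpha = \{U_1, \dots, U_m\}$ with $m = \abs{\alpha}$. I first aim to produce a finite Borel partition $A_1, \dots, A_m$ of $X$ (a disjoint family with $\bigcup_i A_i = X$) that refines $\alpha$ in the strong sense that $\overline{A_i} \subseteq U_i$, and such that each boundary is $\Delta$-null, i.e.\ $\mu(\partial A_i) = 0$ for all $\mu \in \Delta$.

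To build the $A_i$, I would invoke the (SBP) in the form of condition (3) of Lemma \ref{SBP-conds}: for each $x \in X$, choose some $U_{i} \in \alpha$ containing $x$ and a closed neighbourhood $V_x \subseteq U_{i}$ of $x$ with $\mu(\partial V_x) = 0$ for all $\mu \in \Delta$. The open sets $\mathrm{int}(V_x)$ cover $X$, so by compactness finitely many closed sets $W_1, \dots, W_N$ remain, with $\bigcup_j \mathrm{int}(W_j) = X$, each $W_j$ contained in some $U_{i(j)} \in \alpha$, and $\mu(\partial W_j) = 0$. Disjointifying, set $E_j = W_j \setminus (W_1 \cup \cdots \cup W_{j-1})$ and regroup by index, $A_i = \bigcup_{j :\, i(j) = i} E_j$. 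Since $\partial(A\cup B), \partial(A\setminus B) \subseteq \partial A \cup \partial B$, each $\mu(\partial A_i) = 0$; since $A_i \subseteq \bigcup_{j:\,i(j)=i} W_j$, a finite union of closed subsets of $U_i$, one gets $\overline{A_i} \subseteq U_i$; and the $A_i$ partition $X$ because the $E_j$ are disjoint with $\bigcup_j E_j = \bigcup_j W_j \supseteq \bigcup_j \mathrm{int}(W_j) = X$.

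Now I would round off. The set $B := \bigcup_{i=1}^m \partial A_i$ is closed with $\mu(B) = 0 < \eps$ for all $\mu \in \Delta$, so Lemma \ref{lem-small-nbhd} furnishes an open set $O \supseteq B$ with $\mu(O) < \eps$ for all $\mu \in \Delta$. Take any standard partition of unity $\rho_1, \dots, \rho_m$ subordinate to $\alpha$ (so $\rho_i \geq 0$, $\sum_i \rho_i = 1$, $\mathrm{supp}(\rho_i) \subseteq U_i$), available since $X$ is compact metric, and take a Urysohn function $\lambda \colon X \to [0,1]$ with $\lambda \equiv 1$ on $X \setminus O$ and $\lambda \equiv 0$ on $B$. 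I then define $\phi_i = \lambda \chi_{A_i} + (1 - \lambda)\rho_i$, where $\chi_{A_i}$ is the indicator of $A_i$. The key point is that $\lambda \chi_{A_i}$ is continuous: on $\mathrm{int}(A_i)$ and on $X \setminus \overline{A_i}$ it agrees with $\lambda$ and with $0$ respectively, while at a boundary point $x_0 \in \partial A_i \subseteq B$ one has $\lambda(x_0) = 0$ and $\abs{\lambda \chi_{A_i}} \leq \lambda \to 0$, so it is continuous there too. Hence each $\phi_i$ is continuous. One checks $\phi_i \in [0,1]$ and $\sum_i \phi_i = \lambda \sum_i \chi_{A_i} + (1-\lambda)\sum_i \rho_i = 1$ using $\sum_i \chi_{A_i} = 1$; that $\mathrm{supp}(\phi_i) \subseteq \overline{A_i} \cup \mathrm{supp}(\rho_i) \subseteq U_i$, giving (1) and (2); and finally, for $x \notin O$ one has $\lambda(x) = 1$, so $\phi_i(x) = \chi_{A_i}(x) \in \{0,1\}$, whence $\bigcup_i \phi_i^{-1}(0,1) \subseteq O$ and $\mu\big(\bigcup_i \phi_i^{-1}(0,1)\big) \leq \mu(O) < \eps$, which is (3).

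I expect the main obstacle to be exactly the tension in the last step: one must confine the fractional region $\bigcup_i \phi_i^{-1}(0,1)$ to the small set $O$ while simultaneously keeping each $\phi_i$ continuous and supported in $U_i$ and keeping $\sum_i \phi_i = 1$. The device that resolves it is the interpolation by $\lambda$, which equals $1$ off $O$ (forcing the $\phi_i$ to be honest indicators there) yet vanishes on the boundaries $\partial A_i$ (salvaging continuity precisely where $\chi_{A_i}$ jumps); that $\partial A_i$ is $\Delta$-null, coming from the (SBP), is what makes $O$ available with small measure via Lemma \ref{lem-small-nbhd}. A secondary point requiring care is arranging $\overline{A_i} \subseteq U_i$ rather than merely $A_i \subseteq U_i$, which is why the disjointification must be carried out with the closed sets $W_j$.
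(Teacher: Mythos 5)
Your argument is correct, and it builds the partition of unity by a genuinely different device than the paper, although both proofs turn on the same two ingredients: $\Delta$-null boundaries supplied by the (SBP), and Lemma \ref{lem-small-nbhd} to cage those boundaries in an open set of uniformly small measure. The paper shrinks $\alpha$ to an open cover $\{U_i'\}$ with $\mu(\partial U_i')=0$, thickens each boundary to a $\delta$-collar of small measure, forms bump functions $\psi_i$ equal to $1$ on $U_i'$ and decaying linearly across the collar, and normalizes by the recursion $\phi_1=\psi_1$, $\phi_k=\min(\psi_k,\,1-\phi_1-\cdots-\phi_{k-1})$, so that the fractional region sits inside $\bigcup_i\psi_i^{-1}((0,1))$. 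You instead manufacture an exact Borel partition $A_1,\dots,A_m$ with $\mu(\partial A_i)=0$ and $\overline{A_i}\subseteq U_i$ by disjointifying finitely many null-boundary closed neighbourhoods from Lemma \ref{SBP-conds}(3), apply Lemma \ref{lem-small-nbhd} once to the single closed null set $B=\bigcup_i\partial A_i$, and interpolate, $\phi_i=\lambda\chi_{A_i}+(1-\lambda)\rho_i$, between the indicators and an arbitrary subordinate partition of unity. Your route makes condition (3) completely transparent---off the open set $O$ the $\phi_i$ are literally $\{0,1\}$-valued---at the cost of the continuity check for $\lambda\chi_{A_i}$, which you carry out correctly (the Urysohn function vanishes exactly where $\chi_{A_i}$ jumps); the paper's route never leaves the continuous functions but must track the fractional region through the $\min$ recursion. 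The points you flag as delicate---securing $\overline{A_i}\subseteq U_i$ rather than merely $A_i\subseteq U_i$ by disjointifying closed sets, and the inclusions $\partial(A\cup B),\,\partial(A\setminus B)\subseteq\partial A\cup\partial B$---are indeed the places where care is needed, and they are handled properly.
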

\begin{proof}
List $\alpha = \{U_1, U_2, ..., U_{\abs{\alpha}}\}$.
Using the (SBP), for each $i = 1, ..., \abs{\alpha}$, one finds an open set $U'_i\subseteq U_i$ such that $U'_i$, $i=1, 2, ..., \abs{\alpha}$, still form an open cover of $X$, and $\mu(\partial U'_i) = 0$ for all $\mu\in\Delta$. By Lemma \ref{lem-small-nbhd}, there is $\delta>0$ such that
$$\mu((\partial U'_i)_\delta) < \eps/\abs{\alpha},$$
where $(\partial U'_i)_\delta$ is the $\delta$-neighbourhood of $\partial U'$. One may assume that $\delta$ is small enough that $(\partial U'_i)_\delta \subseteq U_i$ for each $i=1, ..., \abs{\alpha}$.

Define
$$ \psi_i = \left\{ \begin{array}{ll} 
1, & x\in U'_i, \\
\max(0, 1-\delta^{-1}\mathrm{dist}(x, \partial U'_i)), & \textrm{otherwise}.
\end{array}\right.$$
Then, define
$$
\left\{
\begin{array}{rcl}
\phi_1(x) & = & \psi_1(x), \\
\phi_2(x) & = & \min(\psi_2, 1-\phi_1(x)), \\
\phi_3(x) & = & \min(\psi_3, 1 - \phi_1(x) - \phi_2(x)), \\
& \vdots & \\
\phi_{\abs{\alpha}}(x) & = & \min(\psi_{\abs{\alpha}}, 1 - \phi_1 - \cdots - \phi_{\abs{\alpha} - 1}(x)).
\end{array}
\right.
$$
Then, $\mathrm{supp}(\phi_i) \subseteq U_i$, $i=1, ..., \abs{\alpha}$, and 
$$\bigcup_{i=1}^{\abs{\alpha}} \phi_i^{-1}(0, 1) \subseteq \bigcup_{i=1}^{\abs{\alpha}}\psi_i^{-1}(0, 1).$$
\end{proof}

The following lemma is contained in the proof of Theorem 6.2 of \cite{Lind-MD} and also in the proof of Corollary 5.4 of \cite{GLT-Zk}. We state and prove it for the convenience of the reader.

\begin{lem}[Section 6 of \cite{Lind-MD}; Section 5 of \cite{GLT-Zk}]\label{pre-rr0}
$(X, \Delta)$ has the (SBP) if, and only if, for any continuous real-valued function $f: X \to \Real$ and any $\eps>0$, there is a continuous real-valued function $g: X\to \Real$ such that
\begin{enumerate}
\item $\norm{f - g}_\infty < \eps$,
\item $\mu(g^{-1}(0)) < \eps$ for all $\mu \in \Delta$.
\end{enumerate} 
\end{lem}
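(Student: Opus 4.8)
The plan is to prove the two implications separately, using Lemma \ref{unity-sbp} for the forward direction and an iterated application of the approximation property for the converse.

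For the forward direction, assume $(X,\Delta)$ has the (SBP). Given $f$ and $\eps$, since $f$ is uniformly continuous on the compact space $X$, I would choose a finite open cover $\alpha$ on each member of which $f$ oscillates by less than $\eps/2$. Applying Lemma \ref{unity-sbp} yields a subordinate partition of unity $\phi_1,\dots,\phi_{\abs{\alpha}}$ with $\mu(\bigcup_i \phi_i^{-1}(0,1)) < \eps$ for all $\mu\in\Delta$. For each $i$ pick a \emph{nonzero} real number $c_i$ within $\eps/2$ of the values of $f$ on the member of $\alpha$ containing $\mathrm{supp}(\phi_i)$ (possible since the nonzero reals are dense), and set $g = \sum_i c_i\phi_i$. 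As $g(x)$ is a convex combination of those $c_i$ with $\phi_i(x)>0$, one gets $\norm{f-g}_\infty < \eps$. The key point is that at any $x\notin\bigcup_i\phi_i^{-1}(0,1)$ exactly one $\phi_i(x)$ equals $1$ and the rest vanish, so $g(x)=c_i\neq 0$; hence $g^{-1}(0)\subseteq\bigcup_i\phi_i^{-1}(0,1)$ and $\mu(g^{-1}(0))<\eps$ for all $\mu$.

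For the converse, assume the approximation property and verify condition (4) of Lemma \ref{SBP-conds}. Fix $x\in X$ and an open $U\ni x$, and choose by Urysohn's lemma a continuous $h:X\to[0,1]$ with $h(x)=0$ and $h\equiv 1$ on $X\setminus U$. It suffices to produce $\tilde h$ uniformly within $1/4$ of $h$ such that $\mu(\tilde h^{-1}(1/2))=0$ for all $\mu\in\Delta$: then $V=\{\tilde h < 1/2\}$ is an open neighbourhood of $x$ with $\overline V\setminus V\subseteq\tilde h^{-1}(1/2)$, so that $x\in V\subseteq U$ and $\mu(\partial V)=0$. Equivalently, writing $f=h-1/2$, I must perturb $f$ to a function whose zero set is $\mu$-null for every $\mu\in\Delta$.

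The main obstacle is that a single application of the approximation property only makes the relevant zero set have measure $<\eps$, not measure zero, so I would pass to a limit. I would build functions $g_1,g_2,\dots$ as follows: apply the approximation property to $f$ to get $g_1$ with $\mu(g_1^{-1}(0))$ small, use Lemma \ref{lem-small-nbhd} to enclose the closed zero set $g_1^{-1}(0)$ in an open set $W_1$ of small measure, and note that $\abs{g_1}$ is bounded below by some $\delta_1>0$ on the compact set $X\setminus W_1$. At each step apply the approximation property to $g_n$ with a parameter smaller than $\delta_n$, so that no new zeros are created outside $W_n$; this traps the new zero set inside $W_n$, and shrinking via Lemma \ref{lem-small-nbhd} produces a decreasing sequence of open sets $W_n\supseteq g_n^{-1}(0)$ with $\mu(W_n)\to 0$. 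The one thing requiring care is the bookkeeping of the perturbation sizes: choosing them small enough (against the previously fixed $\delta_1,\dots,\delta_n$) that the total future perturbation on each $X\setminus W_n$ stays below $\delta_n$ guarantees both that $g_n$ converges uniformly to some $g_\infty$ and that $g_\infty$ still does not vanish on any $X\setminus W_n$. Consequently $g_\infty^{-1}(0)\subseteq\bigcap_n W_n$ has measure zero for every $\mu\in\Delta$, and $\tilde h=g_\infty+1/2$ is the desired perturbation of $h$.
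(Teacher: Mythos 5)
Your proposal is correct. The forward direction is exactly the paper's argument: Lemma \ref{unity-sbp} applied to a cover on which $f$ oscillates by less than $\eps/2$, nonzero coefficients $y_i$, and the observation that $g^{-1}(0)\subseteq\bigcup_i\phi_i^{-1}((0,1))$. For the converse the paper takes a slightly different, more economical route: it shows that each set $Z_c=\{f:\mu(f^{-1}(0))<c \text{ for all } \mu\in\Delta\}$ is dense (by hypothesis) and open (by Lemma \ref{lem-small-nbhd}, which is exactly your stabilization step), so that $\bigcap_n Z_{1/n}$ --- the functions with $\mu$-null zero set for every $\mu\in\Delta$ --- is a dense $G_\delta$ by Baire category; it then perturbs a Urysohn-type function into this set and takes $V=g^{-1}([-1,0])$, whose boundary lies in $g^{-1}(0)$. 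Your iterative construction of $g_\infty$ with $g_\infty^{-1}(0)\subseteq\bigcap_n W_n$ is precisely the Baire argument unrolled by hand for this particular intersection, with the same two ingredients (density from the hypothesis, openness/stability from Lemma \ref{lem-small-nbhd}); the bookkeeping you describe (requiring $\sum_{k>n}\norm{g_k-g_{k+1}}_\infty<\delta_n$, which is arrangeable since $\delta_n$ is fixed before $g_{n+1}$ is chosen) does close the one genuine gap in such a scheme. What the paper's formulation buys is the slightly stronger and reusable conclusion that the good functions form a dense $G_\delta$; what yours buys is a self-contained construction that never names Baire's theorem. Either is acceptable.
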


\begin{proof}
Assume the properties of the lemma. In order to show that $(X, \Delta)$ has the (SBP), it is enough to show that the set $$\{f \in\mathrm{C}_{\Real}(X): \mu(f^{-1}(0)) = 0,\  \mu\in\Delta \}$$ is dense in $\mathrm{C}_\Real(X)$ (it is always a $G_\delta$ set; see Lemma 6.4 of \cite{Lind-MD}). Indeed, let $x \in X$ and let $U$ be an open neighbourhood of $x$, and pick a continuous function $f: X \to [-1, 1]$ such that $f(x) = -1$ and $f(X\setminus U) = \{1\}$. Since the set above is dense, there is $g$ such that $\norm{g - f} < 1/4$ and $\mu(g^{-1}(0)) = 0$ for all $\mu \in\Delta$. Consider the set $V=g^{-1}([-1, 0]) \subseteq U$. It is a closed neighbourhood of $x$ with $\partial V = g^{-1}(0)$, and hence $\mu(\partial V) = 0$ for all $\mu\in\Delta$. So $(X, \Delta)$ has the (SBP).

For each $c>0$, consider the set
$$Z_c:=\{f \in\mathrm{C}_{\Real}(X): \mu(f^{-1}(0)) < c,\ \mu\in\Delta \}.$$
Note that the properties of the lemma imply that $Z_{c}$ is dense in $\mathrm{C}_\Real(X)$.

Let us show that $Z_c$ is also open in $\mathrm{C}_\Real(X)$. Indeed, let $f: X \to \Real$ be a continuous function such that $\mu(f^{-1}(0)) < c$ for all $\mu\in\Delta$, and consider the closed set $V:=f^{-1}(0)$. By Lemma \ref{lem-small-nbhd}, there is an open set $U\supseteq V$ such that $\mu(U) < c$ for all $\mu\in\Delta$. 

Therefore, the set 
\begin{eqnarray*}
&& \{f \in\mathrm{C}_{\Real}(X): \mu(f^{-1}(0)) = 0,\ \mu\in\Delta \} \\
&=& \bigcap_{n=1}^\infty \{f \in\mathrm{C}_{\Real}(X): \mu(f^{-1}(0)) < 1/n,\  \mu\in\Delta \} \\
& = & \bigcap_{n=1}^\infty Z_{1/n} 
\end{eqnarray*}
is dense in $\mathrm{C}_\Real(X)$ (a dense $G_\delta$, in fact), and $(X, \Delta)$ has the (SBP).

For the converse, assume that $(X, \Delta)$ has the (SBP), and let $f: X \to \Real$ and any $\eps>0$ be given. By Lemma \ref{unity-sbp}, there exists a partition of unity $\phi_i: X \to [0, 1]$, $i=1, 2, ..., N$, such that 
\begin{equation}\label{small-shrink}
\mu(\bigcup_{i=1}^N \phi_i^{-1}((0, 1)) < \eps,\quad \mu\in\Delta,
\end{equation}
and
$$\abs{f(x) - f(y)} < \eps/2,\quad x, y\in \phi_i^{-1}(0, 1]),\ i=1, ..., N.$$
For each $i=1, ..., N$, pick $x_i\in \phi_i^{-1}(0, 1]$ and a real number $y_i$ such that $$y_i\neq 0\quad \textrm{and}\quad \abs{y_i - f(x_i)}<\eps/2.$$
Define
$$g= \sum_{i=1}^N y_i\phi_i.$$
Note that for each $x \in \bigcup_{i=1}^N \phi_i^{-1}(1)$, one has that $g(x) \neq 0$, and hence
$$ g^{-1}(0) \subseteq X \setminus \bigcup_{i=1}^N \phi_i^{-1}(1) =  \bigcup_{i=1}^N \phi_i^{-1}((0, 1)).$$ 
By \eqref{small-shrink}, direct calculations show that
\begin{enumerate}
\item $\norm{f - g}_\infty <\eps$, and
\item $\mu(g^{-1}(0)) < \eps$ for all $\mu\in\Delta$,
\end{enumerate}
as desired.
\end{proof}

\begin{defn}
Let $\Delta$ be a closed subset of probability Borel measures on $X$. Then define
$$\norm{f}_{2, \Delta} = \sup\{(\int \abs{f}^2 d\mu)^{\frac{1}{2}}: \mu\in \Delta\},\quad f\in \mathrm{C}(X).$$
\end{defn}

\begin{lem}[Markov's inequality]
$$ \mu(E_\eps) \leq \frac{1}{\eps^2}\int \abs{f}^2d\mu,$$
where $E_\eps = \{x\in X: \abs{f(x)} > \eps\}$.
\end{lem}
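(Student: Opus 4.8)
The plan is to bound $\int \abs{f}^2\,d\mu$ from below by discarding the contribution away from $E_\eps$ and then invoking the defining lower estimate on $\abs{f}$ over $E_\eps$. First I would note that, since $f$ is continuous, $E_\eps = \{x \in X : \abs{f(x)} > \eps\}$ is the preimage of an open half-line under $\abs{f}$ and hence is a Borel (in fact open) set, so $\mu(E_\eps)$ and the restricted integral $\int_{E_\eps} \abs{f}^2\,d\mu$ are well defined. By monotonicity of the integral of the nonnegative function $\abs{f}^2$,
$$\int \abs{f}^2\,d\mu \;\geq\; \int_{E_\eps} \abs{f}^2\,d\mu.$$
On $E_\eps$ the integrand satisfies $\abs{f(x)}^2 > \eps^2$ pointwise, so
$$\int_{E_\eps} \abs{f}^2\,d\mu \;\geq\; \eps^2\,\mu(E_\eps).$$
Combining these two inequalities and dividing by $\eps^2$ (with $\eps>0$ understood, the case $\eps=0$ being vacuous) yields the asserted bound $\mu(E_\eps) \leq \eps^{-2}\int \abs{f}^2\,d\mu$.

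There is essentially no obstacle here: this is the classical Chebyshev--Markov estimate, and the only points that merit a word are the measurability of $E_\eps$ and the monotonicity of the integral, both immediate in the present setting of a Borel probability measure on a compact metric space. I expect the entire proof to occupy two short displays, and its role in the paper is purely to convert an $L^2$-bound (control of $\norm{f}_{2,\Delta}$) into a measure bound of the form appearing in the (SBP) characterization of Theorem~\ref{SBP-2-norm}, so no sharper constant than $\eps^{-2}$ is needed.
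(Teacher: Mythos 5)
Your proof is correct and is essentially the same as the paper's: both restrict the integral of $\abs{f}^2$ to $E_\eps$ and use the pointwise bound $\abs{f}^2 \geq \eps^2$ there (the paper merely writes the same chain of inequalities starting from $\eps^2\mu(E_\eps)$ rather than from $\int\abs{f}^2\,d\mu$). No issues.
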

\begin{proof}
$$\eps^2\mu(E_\eps) = \int_{E_\eps} \eps^2 d\mu  \leq \int_{E_\eps} \abs{f}^2 d\mu + \int_{E_\eps^c} \abs{f}^2 d\mu = \int \abs{f}^2 d\mu.$$
\end{proof}

\begin{lem}\label{ta-na}
Let $f, g$ be real-valued continuous functions on $X$ satisfying $$\norm{f - g}_{2, \Delta}^2 < \frac{\eps^3}{8}$$ and $$\mu(g^{-1}(0)) < \frac{\eps}{2},\quad \mu\in\Delta.$$ Then, there is a real-valued continuous function $g'$ on $X$ such that
\begin{enumerate}
\item $\norm{f - g'}_\infty < \eps$, and
\item $\mu((g')^{-1}(0)) < \eps$ for all $\mu\in\Delta$.
\end{enumerate}
\end{lem}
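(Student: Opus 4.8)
The plan is to convert the $L^2$-type estimate on $f-g$ into a genuine sup-norm approximation by truncating the difference, while keeping the (already small) zero set under control by tracking exactly where the modification is non-trivial.

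First I would use Markov's inequality to locate the set where $g$ fails to approximate $f$ well. Set $B = \{x \in X : \abs{f(x) - g(x)} > \eps/2\}$. Since $\norm{f-g}_{2,\Delta}^2 < \eps^3/8$, Markov's inequality applied to $f-g$ at threshold $\eps/2$ gives, for every $\mu \in \Delta$,
$$\mu(B) \leq \frac{1}{(\eps/2)^2}\int \abs{f-g}^2\,d\mu \leq \frac{4}{\eps^2}\norm{f-g}_{2,\Delta}^2 < \frac{4}{\eps^2}\cdot\frac{\eps^3}{8} = \frac{\eps}{2}.$$

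Next I would define $g'$ by truncating the difference $f-g$ at level $\eps/2$. Let $\phi(t) = \max(-\eps/2, \min(t, \eps/2))$, a continuous function on $\Real$, and put $g' = f - \phi(f-g)$. This is continuous, and since $\abs{\phi} \leq \eps/2$ everywhere, we obtain $\norm{f - g'}_\infty = \norm{\phi(f-g)}_\infty \leq \eps/2 < \eps$, which is property (1). For property (2), the key observation is that $\phi$ acts as the identity precisely off $B$: on the complement of $B$ one has $\phi(f-g) = f-g$, hence $g' = g$ there, while on $B$ one has $g' = f \mp \eps/2$. Consequently every zero of $g'$ lies either in $g^{-1}(0)$ (from the region where $g'=g$) or in $B$, so that $(g')^{-1}(0) \subseteq g^{-1}(0) \cup B$. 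Adding the two estimates $\mu(g^{-1}(0)) < \eps/2$ and $\mu(B) < \eps/2$ then yields $\mu((g')^{-1}(0)) < \eps$ for all $\mu\in\Delta$, as required.

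I do not expect a serious obstacle; the only delicate point is the bookkeeping of the zero set of $g'$, specifically the observation that replacing $g$ by $f \mp \eps/2$ on $B$ can only create new zeros inside $B$, whose measure is already controlled by Markov's inequality. The threshold $\eps/2$ and the exponent in the hypothesis $\norm{f-g}_{2,\Delta}^2 < \eps^3/8$ are calibrated exactly so that the contribution of $g^{-1}(0)$ and the contribution of $B$ each fall under $\eps/2$, and the sup-norm defect stays under $\eps/2$.
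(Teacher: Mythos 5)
Your proof is correct and follows essentially the same route as the paper's: both apply Markov's inequality to the set $B=\{\abs{f-g}>\eps/2\}$ and then alter $g$ only on $B$ so that the new zero set is contained in $g^{-1}(0)\cup B$. The only (cosmetic) difference is the patching device: you truncate $f-g$ at level $\eps/2$, whereas the paper interpolates between $g$ and $f$ with a Urysohn function supported near $B$; your version even gives the slightly stronger bound $\norm{f-g'}_\infty\le\eps/2$.
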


\begin{proof}
Define
$$E_\eps = \{x\in X: \abs{f(x) - g(x)} > \eps/2\}.$$
Then, by Markov's inequality,
$$ \mu(E_\eps) \leq \frac{4}{\eps^2}\norm{f-g}_{2, \Delta}^2 < \frac{\eps}{2},\quad \mu\in\Delta.$$

Pick an open neighbourhood $U$ of $E_\eps^c$ such that 
$$\abs{f(x) - g(x)} < \eps,\quad x\in U,$$
and pick a continuous function $h: X \to [0, 1]$ such that $$h|_{E_\eps^c} = 1 \quad\mathrm{and}\quad h|_{U^c} = 0.$$

Define
$$g'(x) = h(x)g(x) + (1-h(x))f(x).$$
Then 
$$\abs{f(x) - g'(x)} <\eps,\quad x\in X.$$
Moreover, since
$$(g')^{-1}(0) \subseteq g^{-1}(0) \cup E_\eps,$$
one has $$\mu((g')^{-1}(0)) \leq \mu(g^{-1}(0)) + \mu(E_\eps)< \eps,$$
as desired.
\end{proof}

Using this together with Lemma \ref{pre-rr0}, we have the following criterion for the (SBP).
\begin{thm}\label{SBP-2-norm}
 $(X, \Delta)$ has the (SBP) if, and only if, for any continuous real-valued function $f: X \to \Real$ and any $\eps>0$, there is a continuous real-valued function $g: X\to \Real$ such that
\begin{enumerate}
\item $\norm{f - g}_{2, \Delta} < \eps$, and 
\item $\mu(g^{-1}(0)) < \eps$ for all $\mu \in \Delta$; this is equivalent to the existence of $\delta>0$ such that $\tau_\mu(\chi_{\delta}(g)) < \eps$, $\mu \in\Delta$, where 
$$\chi_\delta(t) = 
\left\{
\begin{array}{ll}
1, & |t| < \delta/2, \\
0, & |t| > \delta, \\
\textrm{linear}, & \textrm{otherwise}.
\end{array}
\right.
$$
\end{enumerate} 
\end{thm}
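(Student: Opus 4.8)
The plan is to reduce both implications to the already-established $\infty$-norm criterion of Lemma \ref{pre-rr0}, using the comparison $\norm{\cdot}_{2,\Delta} \le \norm{\cdot}_\infty$ in one direction and the ``upgrading'' Lemma \ref{ta-na} in the other; the reformulation of condition (2) in terms of $\chi_\delta$ I would handle separately by a compactness argument resting on Lemma \ref{lem-small-nbhd}.

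For the forward implication, suppose $(X,\Delta)$ has the (SBP). Given $f$ and $\eps>0$, Lemma \ref{pre-rr0} produces $g$ with $\norm{f-g}_\infty < \eps$ and $\mu(g^{-1}(0)) < \eps$ for all $\mu \in \Delta$. Since each $\mu$ is a probability measure, $\int \abs{f-g}^2 \, d\mu \le \norm{f-g}_\infty^2$, so $\norm{f-g}_{2,\Delta} \le \norm{f-g}_\infty < \eps$, giving condition (1); condition (2) is identical to the conclusion of Lemma \ref{pre-rr0}. This direction is immediate.

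For the converse, assume the $2$-norm approximation property holds, and let $f$ and $\eps > 0$ be given. I would apply the hypothesis with the tolerance $\eps$ replaced by $\eta := \min\{\sqrt{\eps^3/8},\, \eps/2\}$, obtaining $g$ with $\norm{f-g}_{2,\Delta} < \eta$ and $\mu(g^{-1}(0)) < \eta$ for all $\mu$. Then $\norm{f-g}_{2,\Delta}^2 < \eps^3/8$ and $\mu(g^{-1}(0)) < \eps/2$, which are exactly the hypotheses of Lemma \ref{ta-na}. That lemma yields $g'$ with $\norm{f-g'}_\infty < \eps$ and $\mu((g')^{-1}(0)) < \eps$ for all $\mu$, which is the $\infty$-norm criterion of Lemma \ref{pre-rr0}; hence $(X,\Delta)$ has the (SBP). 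The only delicate point here is the bookkeeping of constants, and this is precisely what Lemma \ref{ta-na} absorbs, so no new analytic work is required.

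It remains to verify the equivalence asserted within condition (2). For any $\delta > 0$ one has $\chi_\delta(0) = 1$, hence $\chi_\delta(g) \ge \chi_{g^{-1}(0)}$ pointwise and therefore $\mu(g^{-1}(0)) \le \tau_\mu(\chi_\delta(g))$; thus the existence of a $\delta$ with $\tau_\mu(\chi_\delta(g)) < \eps$ for all $\mu$ forces $\mu(g^{-1}(0)) < \eps$ for all $\mu$. Conversely, assume $\mu(g^{-1}(0)) < \eps$ for all $\mu \in \Delta$. Since $g^{-1}(0)$ is closed, Lemma \ref{lem-small-nbhd} furnishes an open set $U \supseteq g^{-1}(0)$ with $\mu(U) < \eps$ for all $\mu$. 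The closed sets $\{x : \abs{g(x)} \le \delta\}$ decrease, as $\delta \downarrow 0$, to $g^{-1}(0) \subseteq U$, so the compact sets $\{\abs{g}\le\delta\}\setminus U$ form a decreasing family with empty intersection; by the finite intersection property there is $\delta > 0$ with $\{\abs{g}\le\delta\} \subseteq U$. As $\chi_\delta(g)$ is supported in $\{\abs{g}\le\delta\}$ and bounded by $1$, we get $\tau_\mu(\chi_\delta(g)) \le \mu(U) < \eps$ for all $\mu$, completing the equivalence. The main obstacle across the whole argument is conceptual rather than technical: recognizing that the $2$-norm condition, though a priori weaker than the $\infty$-norm one, can be upgraded because a small $L^2$-perturbation need only be corrected on the small set where $f$ and $g$ differ significantly---the fact packaged in Lemma \ref{ta-na}.
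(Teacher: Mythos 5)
Your proof is correct and follows essentially the same route as the paper: the forward direction via $\norm{\cdot}_{2,\Delta}\le\norm{\cdot}_\infty$ together with Lemma \ref{pre-rr0}, and the converse by feeding a suitably shrunken tolerance into the hypothesis and invoking Lemma \ref{ta-na} (your choice $\eta=\min\{\sqrt{\eps^3/8},\eps/2\}$ is, if anything, slightly cleaner bookkeeping than the paper's $\min\{\eps^3/8,\eps/2\}$). Your verification of the $\chi_\delta$ reformulation in condition (2), via Lemma \ref{lem-small-nbhd} and a finite-intersection argument, is also correct; the paper asserts that equivalence without proof.
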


\begin{proof}
By Lemma \ref{pre-rr0} and the fact that $ \norm{a}_{2, \Delta} \leq \norm{a}$, we only need to show that if  $(X, \Delta)$ has the property that for any continuous real-valued function $f: X \to \Real$ and any $\eps>0$, there is a continuous real-valued function $g: X\to \Real$ such that
\begin{enumerate}
\item $\norm{f - g}_{2, \Delta} < \eps$, and 
\item $\mu(g^{-1}(0)) < \eps$ for all $\mu \in \Delta$,
\end{enumerate}
then $(X, \Delta)$ has the (SBP). By Lemma \ref{pre-rr0} again, we only need to show that $(X, \Delta)$ has the property that for any continuous real-valued function $f: X \to \Real$ and any $\eps>0$, there is a continuous real-valued function $g: X\to \Real$ such that
\begin{enumerate}
\item $\norm{f - g}_{\infty} < \eps$, and 
\item $\mu(g^{-1}(0)) < \eps$ for all $\mu \in \Delta$.
\end{enumerate}

With the given $\eps>0$, applying the assumption with $f$ and $\min\{\eps^3/8, \eps/2\}$, there is a continuous real-valued function $g: X\to \Real$ such that
\begin{enumerate}
\item $\norm{f - g}_{2, \Delta} <\eps^3/8$, and 
\item $\mu(g^{-1}(0)) < \eps/2$ for all $\mu \in \Delta$.
\end{enumerate}
Then, by Lemma \ref{ta-na}, there is a real-valued continuous function $g'$ on $X$ such that
\begin{enumerate}
\item $\norm{f - g'}_\infty < \eps$, and
\item $\mu((g')^{-1}(0)) < \eps$ for all $\mu\in\Delta$,
\end{enumerate}
as desired.
\end{proof}

We would like to point out the following corollary, which might be interesting on its own.
\begin{cor}
Consider $(X, \Delta_1)$ and $(Y, \Delta_2)$, and assume there is an embedding $\phi: \mathrm{C}(X) \to \mathrm{C}(Y)$ such that 
\begin{enumerate}
\item $\Delta_1 \subseteq \phi^*(\Delta_2)$,
\item $\phi(\mathrm{C}(X))$ is dense in $\mathrm{C}(Y)$ with respect to $\norm{\cdot}_{2, \Delta_2}$.
\end{enumerate}
If $(Y, \Delta_2)$ has the (SBP), then $(X, \Delta_1)$ also has the (SBP).
\end{cor}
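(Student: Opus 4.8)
\section{Proof proposal}

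The plan is to verify the $2$-norm criterion of Theorem~\ref{SBP-2-norm} for $(X,\Delta_1)$ by transporting, through $\phi$, the corresponding property of $(Y,\Delta_2)$. Two elementary facts drive everything. First, by hypothesis (1), every $\mu\in\Delta_1$ is of the form $\mu=\phi^*(\nu)$ for some $\nu\in\Delta_2$, so that $\int_X\psi\,d\mu=\int_Y\phi(\psi)\,d\nu$ for every $\psi\in\mathrm{C}(X)$; in particular $\norm{\psi}_{2,\mu}=\norm{\phi(\psi)}_{2,\nu}$. Second, since $\phi$ is a (unital) $*$-homomorphism, it commutes with continuous functional calculus, so $\phi(\chi_\delta(g))=\chi_\delta(\phi(g))$ for any self-adjoint $g$ and any $\delta>0$. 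These let me move both the approximation estimate and the zero-set estimate back and forth between $X$ and $Y$.

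So fix $f\in\mathrm{C}_\Real(X)$ and $\eps>0$. Applying Theorem~\ref{SBP-2-norm} to $(Y,\Delta_2)$ with the function $\phi(f)$ and parameter $\eps/2$, I obtain $h\in\mathrm{C}_\Real(Y)$ and $\delta_0>0$ with $\norm{\phi(f)-h}_{2,\Delta_2}<\eps/2$ and $\nu(\{\,\abs{h}\le\delta_0\,\})<\eps/2$ for all $\nu\in\Delta_2$ (the latter coming from the $\chi_\delta$ form of condition (2), since $\mathbf{1}_{\{\abs{h}\le\delta_0\}}\le\chi_\delta(h)$). By hypothesis (2), $\phi(\mathrm{C}(X))$ is $\norm{\cdot}_{2,\Delta_2}$-dense in $\mathrm{C}(Y)$, so I can choose $g_0\in\mathrm{C}(X)$ with $\norm{\phi(g_0)-h}_{2,\Delta_2}<\eta$, where $\eta$ is to be specified; replacing $g_0$ by its self-adjoint (real) part, which maps under $\phi$ to the real part of $\phi(g_0)$ and only improves the estimate since $h$ is real, I may assume $g_0$ is real-valued. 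I claim $g=g_0$ witnesses the criterion for $(X,\Delta_1)$.

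Condition (1) is immediate: for $\mu=\phi^*(\nu)\in\Delta_1$ the isometry identity gives $\norm{f-g_0}_{2,\mu}=\norm{\phi(f)-\phi(g_0)}_{2,\nu}\le\norm{\phi(f)-h}_{2,\Delta_2}+\eta<\eps/2+\eta$, so $\norm{f-g_0}_{2,\Delta_1}<\eps$ once $\eta<\eps/2$. The substance is condition (2), the transfer of the small-zero-set property, since a $2$-norm bound does not by itself control a zero set. Here I argue on $Y$: if $\abs{\phi(g_0)}\le\delta_0/2$ while $\abs{h}>\delta_0$, then $\abs{\phi(g_0)-h}>\delta_0/2$, whence
$$\{\abs{\phi(g_0)}\le\delta_0/2\}\subseteq\{\abs{h}\le\delta_0\}\cup\{\abs{\phi(g_0)-h}>\delta_0/2\}.$$
Markov's inequality bounds the last set by $(4/\delta_0^2)\norm{\phi(g_0)-h}_{2,\nu}^2\le(4/\delta_0^2)\eta^2$, so choosing $\eta$ small enough (in terms of $\delta_0$ and $\eps$) gives $\nu(\{\abs{\phi(g_0)}\le\delta_0/2\})<\eps$ for every $\nu\in\Delta_2$. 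Finally I pull this back: for $\mu=\phi^*(\nu)$, using $\mathbf{1}_{g_0^{-1}(0)}\le\chi_{\delta_0/2}(g_0)$ and the functional-calculus identity,
$$\mu(g_0^{-1}(0))\le\int_X\chi_{\delta_0/2}(g_0)\,d\mu=\int_Y\chi_{\delta_0/2}(\phi(g_0))\,d\nu\le\nu(\{\abs{\phi(g_0)}\le\delta_0/2\})<\eps.$$

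The only real obstacle is condition (2), and it is exactly the point where the two hypotheses interact: the functional-calculus identity together with $\mu=\phi^*(\nu)$ converts a Borel sublevel estimate on $Y$ into one on $X$, while Markov's inequality is what upgrades the $\norm{\cdot}_{2,\Delta_2}$-density of hypothesis (2) into genuine control of $\{\phi(g_0)=0\}$. (I read ``embedding'' as unital, which is what makes $\phi^*$ carry probability measures to probability measures and $\chi_\delta$ commute with $\phi$; in the non-unital case one would restrict to functions vanishing at a basepoint.) With both conditions of Theorem~\ref{SBP-2-norm} verified for $(X,\Delta_1)$, that theorem yields the (SBP) for $(X,\Delta_1)$.
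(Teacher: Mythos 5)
Your proof is correct and follows essentially the same route as the paper's: apply the $\norm{\cdot}_{2,\Delta_2}$-criterion of Theorem~\ref{SBP-2-norm} on $Y$, fix a uniform $\delta$ controlling $\tau(\chi_\delta(\cdot))$ over the compact set $\Delta_2$, approximate by an element of $\phi(\mathrm{C}(X))$ in the $2$-norm, and pull back through $\phi^*$. The only difference is that you make explicit, via Markov's inequality and the inclusion of sublevel sets, the step the paper compresses into ``$\phi(g)$ sufficiently close to $g'$ that $\tau(\chi_\delta(g))<\eps$,'' which is a welcome clarification rather than a deviation.
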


\begin{proof}
Identify $\mathrm{C}(X)$ with a sub-C*-algebra of $\mathrm{C}(Y)$, and identify $\Delta_1$ with a subset of $\Delta_2$.

Let $f: X \to \Real$ be continuous, and let $\eps>0$ be arbitrary. Since $(Y, \Delta_2)$ has the (SBP), by Theorem \ref{SBP-2-norm}, there is $g': Y\to \Real$ such that
\begin{enumerate}
\item $\norm{\phi(f) - g'}_{2, \Delta_2} < \eps$,
\item $\mu((g')^{-1}(0)) < \eps$ for all $\mu \in \Delta_2$.
\end{enumerate} 
Since $\Delta_2$ and $(g')^{-1}(0)$ are closed, by Lemma \ref{lem-small-nbhd}, there is an open neighbourhood of $(g')^{-1}(0)$ with measure uniformly smaller than $\eps$ with respect to $\Delta_2$. It then follows that there is $\delta>0$ such that
$$\tau(\chi_\delta(g')) < \eps,\quad \tau \in \Delta_2.$$

Since $\phi(\mathrm{C}(X))$ is dense inside $\mathrm{C}(Y)$ with respect to $\norm{\cdot}_{2, \Delta_2}$, there is a self-adjoint element $g \in \mathrm{C}(X)$ with $\phi(g)$ sufficiently close to $g'$ (with respect to $\norm{\cdot}_{2, \Delta_2}$) that 
$$\norm{\phi(f) - \phi(g)}_{2, \Delta_2} < \eps \quad \mathrm{and} \quad \tau(\chi_\delta(g))  < \eps,\quad \mu \in \Delta_2.$$
In particular, $\mu((g)^{-1}(0)) < \eps$ for all $\mu \in \Delta_1$. By Theorem \ref{SBP-2-norm} again, $(X, \Delta_1)$ has the (SBP).
\end{proof}

\subsection{Real rank zero}\label{rr0-section}

It is useful to characterize the (SBP) using the real rank zero property of a certain sequence algebra. Let us start with the following lemma:

\begin{lem}\label{SBP-approx}
Assume that $(X, \Delta)$ has the (SBP). Let $f\in\mathrm{C}_\Real(X)$, and let $\eps>0$. Then there are sequences $(g_n), (h_n)$ in $ \mathrm{C}_\Real(X)$ such that for each $n=1, 2, ... $, 
\begin{enumerate}
\item $\norm{f-g_n}_\infty < \eps$,
\item $\tau_\mu(\abs{g_nh_n - 1}^2) < 1/2^n$ for all $\mu\in\Delta$, 
\item $\norm{h_n}_\infty < 4/\eps$, 
\item $\norm{g_n - g_{n+1}}_{2, \Delta} < {1}/{2^n}$ and $\norm{h_n - h_{n+1}}_{2, \Delta} < {1}/{2^n}$.
\end{enumerate}
\end{lem}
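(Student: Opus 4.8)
The plan is to reduce the whole statement to the choice of a single well-behaved approximant $g_0$ of $f$ and then to produce both sequences $(g_n)$ and $(h_n)$ by applying fixed one-variable continuous functions to $g_0$, with a parameter $\delta_n\downarrow 0$. The point is that if $g_0$ is chosen so that its zero set is genuinely $\mu$-null for \emph{every} $\mu\in\Delta$, then all four conditions follow from elementary functional calculus. Concretely, since $(X,\Delta)$ has the (SBP), I would invoke Lemma~\ref{pre-rr0} (specifically the density assertion established in its proof, namely that $\{g:\mu(g^{-1}(0))=0,\ \mu\in\Delta\}$ is a dense $G_\delta$ in $\mathrm{C}_{\Real}(X)$) to pick $g_0\in\mathrm{C}_{\Real}(X)$ with $\norm{f-g_0}_\infty<\eps/2$ and $\mu(g_0^{-1}(0))=0$ for all $\mu\in\Delta$. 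Applying Lemma~\ref{lem-small-nbhd} to the closed set $V=g_0^{-1}(0)$ (on which every $\mu$ vanishes) and using that the closed sets $\{|g_0|\le\delta\}$ decrease to $g_0^{-1}(0)$, I would then obtain the crucial \emph{uniform} smallness $\sup_{\mu\in\Delta}\mu(\{|g_0|\le\delta\})\to 0$ as $\delta\to 0^+$. Fix $\delta_1>\delta_2>\cdots\to 0$ with $\sup_{\mu}\mu(\{|g_0|\le\delta_n\})<c_n$, where $c_n$ is a threshold to be specified.

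Next I would set $g_n=\beta_{\delta_n}\circ g_0$, where $\beta_\delta(t)=t+(\eps/4)\max(-1,\min(1,t/\delta))$ spreads values away from $0$. Since $\norm{\beta_{\delta_n}-\mathrm{id}}_\infty\le\eps/4$, one gets $\norm{f-g_n}_\infty<\eps/2+\eps/4<\eps$, which is (1); and for $|g_0(x)|\ge\delta_n$ one has $|g_n(x)|=|g_0(x)|+\eps/4>\eps/4$, so that $\{|g_n|\le\eps/4\}\subseteq\{|g_0|<\delta_n\}$. Define $h_n=\Phi\circ g_n$ with $\Phi(s)=s/\max(s^2,(\eps/4)^2)$; then $\norm{h_n}_\infty\le 4/\eps$, giving (3), and $\Phi$ is Lipschitz with constant $16/\eps^2$. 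On $\{|g_n|\ge\eps/4\}$ one has $g_nh_n=1$, while on its complement $\abs{g_nh_n-1}\le 1$, so $\tau_\mu(\abs{g_nh_n-1}^2)\le\mu(\{|g_0|<\delta_n\})<c_n$, which yields (2) as soon as $c_n\le 1/2^n$.

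For (4) I would exploit that the functions only differ on a small set: $\beta_{\delta_n}$ and $\beta_{\delta_{n+1}}$ agree on $\{|t|\ge\delta_n\}$, so $g_n-g_{n+1}$ is supported in $\{|g_0|<\delta_n\}$ and bounded by $\eps/2$, whence $\norm{g_n-g_{n+1}}_{2,\Delta}^2\le(\eps/2)^2 c_n$. Because $h_n=h_{n+1}$ wherever $g_n=g_{n+1}$, and $\Phi$ is Lipschitz, $\norm{h_n-h_{n+1}}_{2,\Delta}\le(16/\eps^2)\norm{g_n-g_{n+1}}_{2,\Delta}$; choosing $c_n$ small enough (e.g.\ $c_n\le (\eps^4/256)\,4^{-n}$) makes both quantities $<1/2^n$. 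Fixing $c_n$ to meet the demands of (2) and (4) simultaneously then fixes the sequence $(\delta_n)$ and completes the construction.

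The hard part is the first step, and in particular the passage from ``small measure'' to ``null measure.'' The entire construction rests on having a single $g_0$ whose zero set is exactly $\mu$-null for all $\mu\in\Delta$ at once, since this is precisely what forces $\sup_{\mu}\mu(\{|g_0|\le\delta_n\})\to 0$ and thereby makes the spreadings $g_n=\beta_{\delta_n}(g_0)$ a genuine $\norm{\cdot}_{2,\Delta}$-Cauchy sequence. If $g_0$ only had a zero set of small but positive measure, consecutive spreadings would differ on a set whose measure is bounded away from $0$, and condition (4) would fail; one cannot instead take the $g_n$ to be $\norm{\cdot}_{2,\Delta}$-close to $f$, because that would force the ``bad set'' $\{|g_n|\le\eps/4\}$ to be large wherever $f$ is small. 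The uniformity over $\Delta$ of the convergence $\mu(\{|g_0|\le\delta\})\to 0$, which comes from the compactness of $\Delta$ via Lemma~\ref{lem-small-nbhd}, is what ties these opposing requirements together.
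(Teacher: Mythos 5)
Your proof is correct, but it takes a genuinely different route from the paper's. The paper fixes an open cover on which $f$ oscillates by less than $\eps/2$ and, by repeated application of Lemma \ref{unity-sbp}, builds a nested sequence of subordinate partitions of unity $\phi_i^{(n)}$ whose transition regions $\bigcup_i(\phi_i^{(n)})^{-1}((0,1))$ shrink geometrically in measure; then $g_n=\sum_i y_i\phi_i^{(n)}$ and $h_n=\sum_i y_i^{-1}\phi_i^{(n)}$, and the Cauchy estimates in (4) come from the telescoping containments of those transition regions. You instead invoke a Baire category argument (the dense-$G_\delta$ assertion inside the proof of Lemma \ref{pre-rr0}, which is indeed available under the (SBP) by combining the two directions of that lemma) to produce a single $g_0$ with $\norm{f-g_0}_\infty<\eps/2$ whose zero set is $\mu$-null for \emph{every} $\mu\in\Delta$ simultaneously, upgrade this to the uniform statement $\sup_{\mu\in\Delta}\mu(\set{\abs{g_0}\le\delta})\to0$ via Lemma \ref{lem-small-nbhd} and compactness of $X\setminus U$, and then obtain both sequences by applying fixed one-variable continuous functions to $g_0$. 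This makes condition (4) essentially automatic --- consecutive terms differ only on $\set{\abs{g_0}<\delta_n}$ and $h_n$ is a Lipschitz function of $g_n$ --- at the cost of a nonconstructive appeal to Baire; the paper's proof is longer but fully constructive and keeps the partition-of-unity structure explicit. Your estimates check out; the only blemish is that your $\Phi$ yields $\norm{h_n}_\infty\le 4/\eps$ rather than the strict inequality in (3), which is repaired by replacing the threshold $\eps/4$ by, say, $\eps/3.9$ in both $\beta_\delta$ and $\Phi$ (this still gives $\norm{f-g_n}_\infty<\eps/2+\eps/3.9<\eps$).
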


\begin{proof}
Without loss of generality, one may assume that $\eps<1$ and $\norm{f} = 1$.

Choose an open cover $\{U_1, ..., U_N\}$ of $X$ such that  
$$\abs{f(x) - f(y)} < \eps/2,\quad x, y\in U_i,\ i=1, ..., N.$$

By Lemma \ref{unity-sbp}, there is a partition of unity $\phi^{(1)}_i: X \to [0, 1]$, subordinate to $U_1, U_2, ..., U_N,$ such that 
\begin{equation}\label{small-step-1}
\mu(\bigcup_{i=1}^N (\phi^{(1)}_i)^{-1}((0, 1))) < \eps/2N{ ({4}/{\eps} +5)^2 },\quad \mu\in\Delta.
\end{equation}
Consider the collection of open sets 
\begin{equation}\label{refine}
U^{(1)}_i = (\phi_i^{(1)})^{-1}((0, 1]),\quad i=1, 2, ..., N,
\end{equation} 
which is an open cover of $X$ satisfying 
\begin{equation*}
U_i^{(1)} \subseteq U_i ,\quad i=1, ..., N.
\end{equation*}
Using Lemma \ref{unity-sbp} again, one obtains 
a partition of unity $\phi^{(2)}_i: X \to [0, 1]$, subordinate to $U^{(1)}_1, U^{(1)}_2, ..., U^{(1)}_N$, such that 
\begin{equation*}
\mu(\bigcup_{i=1}^N (\phi^{(2)}_i)^{-1}((0, 1))) < \eps/2^2{ N ({4}/{\eps} +5)^2 },\quad \mu\in\Delta.
\end{equation*}
Noting that, for each $i=1, ..., N$, by \eqref{refine},  $\phi^{(2)}_i(x) = 1$ for all $x$ satisfying $\phi^{(1)}_i(x) = 1$ (so $x \notin U_j^{(1)}$, $j\neq i$), one then has 
$$(\phi_i^{(2)})^{-1}((0, 1)) \subseteq (\phi^{(1)}_i)^{-1}((0, 1)) \subseteq \bigcup_{i=1}^N (\phi^{(1)}_i)^{-1}((0, 1)) ,$$
and therefore, by \eqref{small-step-1},
$$\norm{\phi_i^{(1)} - \phi_i^{(2)} }_{2, \Delta} < \eps/2N{ ({4}/{\eps} +5)^2 } < \eps/2 N.$$

Repeating this process, one obtains partitions of unity $\phi^{(n)}_i: X \to [0, 1]$, $i=1, 2, ..., N$, subordinate to $U_1, U_2, ..., U_N$, such that 
\begin{equation}\label{small-nbhds-2}
\mu(\bigcup_{i=1}^N (\phi^{(n)}_i)^{-1}((0, 1))) < \eps/N2^n{ ({4}/{\eps} +5)^2 },\quad \mu\in\Delta,
\end{equation}
and
\begin{equation}\label{small-step-n}
\norm{\phi_i^{(n)} - \phi_i^{(n+1)} }_{2, \Delta} < {\eps}/{2^n} N .
\end{equation}

For each $i=1, ..., N$, pick $x_i\in U_i$ and then pick a real number $y_i$ such that $$\abs{y_i} > \eps/4 \quad \textrm{and}\quad \abs{y_i - f(x_i)}<\eps/2.$$
Define
$$g_n = \sum_{i=1}^N y_i\phi^{(n)}_i \quad\mathrm{and}\quad h_n = \sum_{i=1}^N \frac{1}{y_i}\phi^{(n)}_i.$$ 
Then
$$\norm{f - g_n}_\infty < \eps$$
and
$$\abs{h_n(x)} \leq \sum_{i=1}^N \frac{\phi^{(n)}_i(x)}{\abs{y_i}} < \frac{4}{\eps},\quad x\in X.$$

Note that
$(g_nh_n)(x) = 1$ whenever $x\in \bigsqcup_{i=1}^N (\phi^{(n)}_i)^{-1}(1)$, and then, together with \eqref{small-nbhds-2}, for all $\mu\in\Delta$, 
$$\tau_\mu(\abs{g_nh_n - 1}^2) < (\frac{4\norm{g_n}}{\eps} +1)^2 \cdot \frac{\eps}{2^nN(\frac{4}{\eps} +5)^2} < (\frac{4(1 + \eps)}{\eps} +1)^2 \frac{1}{2^n(\frac{4}{\eps} +5)^2}  = \frac{1}{2^n}.$$

By \eqref{small-step-n},
$$
\norm{g_n - g_{n+1}}_{2, \Delta}  =  \norm{ \sum_{i=1}^N y_i (\phi_i^{(n)} - \phi_i^{(n+1)} )}_{2, \Delta} \leq \sum_{i=1}^N \abs{y_i} \norm{ \phi_i^{(n)} - \phi_i^{(n+1)} }_{2, \Delta} 
 \leq 1/2^n
$$
and
$$
\norm{h_n - h_{n+1}}_{2, \Delta}  =  \norm{ \sum_{i=1}^N \frac{1}{y_i} (\phi_i^{(n)} - \phi_i^{(n+1)} )}_{2, \Delta} \leq \sum_{i=1}^N \abs{\frac{1}{y_i}} \norm{ \phi_i^{(n)} - \phi_i^{(n+1)} }_{2, \Delta} 
 \leq 1/2^n,
$$
as desired.
\end{proof}

Consider $l^\infty(\mathrm{C}(X))$ and consider the ideal $$J_{2, \omega, \Delta}:=\{(f_1, f_2, ...) \in l^\infty(\mathrm{C}(X)): \lim_{n\to\omega} (\sup\{\tau(\abs{f_n}^2): \tau\in\Delta\}) = 0\}.$$
In general, if $A$ is a C*-algebra and $\Delta$ is a closed set of tracial states of $A$, one also considers $l^\infty(A)$ and the ideal $$J_{2, \omega, \Delta}:=\{(a_1, a_2, ...) \in l^\infty(A): \lim_{n\to\omega} (\sup\{\tau(\abs{a_n}^2): \tau\in\Delta\}) = 0\}.$$

\begin{thm}\label{RR0=SBP}
The C*-algebra $l^\infty(\mathrm{C}(X))/J_{2, \omega, \Delta}$ has real rank zero if, and only if, $(X, \Delta)$ has the (SBP).
\end{thm}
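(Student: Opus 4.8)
The plan is to treat the two implications separately, using throughout that $B:=\ell^\infty(\mathrm{C}(X))/J_{2,\omega,\Delta}$ is a \emph{commutative} C*-algebra: $\ell^\infty(\mathrm{C}(X))$ is commutative and $J_{2,\omega,\Delta}$ is a norm-closed ideal, so $B$ is commutative, and in particular any one-sided inverse in $B$ is automatically two-sided. I record the two facts about $B$ that I will use. First, the quotient norm satisfies $\norm{[(x_n)]}_B\leq \sup_n\norm{x_n}_\infty$, and every self-adjoint class lifts to a bounded sequence of real-valued functions (average any lift with its adjoint). Second, a self-adjoint class $b=[(g_n)]$ is invertible in $B$ if and only if there is a bounded sequence $(h_n)$ in $\mathrm{C}(X)$ with $(g_nh_n-1)\in J_{2,\omega,\Delta}$, i.e.\ $\lim_{n\to\omega}\sup_{\mu\in\Delta}\tau_\mu(\abs{g_nh_n-1}^2)=0$. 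Real rank zero of $B$ means exactly that the invertible self-adjoint elements are dense in the self-adjoint part of $B$.

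For the implication that the (SBP) gives real rank zero, I would start with an arbitrary self-adjoint $a=[(f_n)]\in B$, lifted so that each $f_n\in\mathrm{C}_\Real(X)$, and an $\eps>0$. For each fixed $n$, apply Lemma \ref{SBP-approx} to $f_n$ with tolerance $\eps/2$; this produces sequences $(g^{(n)}_m)_m$ and $(h^{(n)}_m)_m$, and I would pass to the diagonal, setting $g_n:=g^{(n)}_n$ and $h_n:=h^{(n)}_n$. Conclusions (1) and (3) of that lemma then give $\norm{f_n-g_n}_\infty<\eps/2$ and $\norm{h_n}_\infty<8/\eps$, so $(h_n)$ is bounded, while conclusion (2) gives $\sup_{\mu\in\Delta}\tau_\mu(\abs{g_nh_n-1}^2)<1/2^n\to 0$. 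Hence $b:=[(g_n)]$ is self-adjoint with $(g_nh_n-1)\in J_{2,\omega,\Delta}$, so $b$ is invertible in $B$ with inverse $[(h_n)]$, and $\norm{a-b}_B\leq\sup_n\norm{f_n-g_n}_\infty\leq \eps/2<\eps$. This exhibits an invertible self-adjoint element within $\eps$ of $a$, so $B$ has real rank zero.

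For the converse, I would appeal to the uniform-norm criterion of Lemma \ref{pre-rr0}: it suffices to approximate a given $f\in\mathrm{C}_\Real(X)$, to within $\eps$ in $\norm{\cdot}_\infty$, by some $g\in\mathrm{C}_\Real(X)$ with $\mu(g^{-1}(0))<\eps$ for all $\mu\in\Delta$. View $f$ as the constant class $a=[(f,f,\dots)]\in B$. Real rank zero supplies an invertible self-adjoint $b\in B$ with $\norm{a-b}_B<\eps$; choosing a self-adjoint (hence real) representative of $a-b$ of norm $<\eps$ and subtracting it from $f$, I obtain a real lift $(g_n)$ of $b$ with $\sup_n\norm{f-g_n}_\infty<\eps$. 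Invertibility of $b$ gives a bounded $(h_n)$, say $\norm{h_n}_\infty\leq R$, with $\lim_{n\to\omega}\sup_{\mu\in\Delta}\tau_\mu(\abs{g_nh_n-1}^2)=0$. Setting $\delta:=1/(2R)$, on the open set $\{\,\abs{g_n}<\delta\,\}$ one has $\abs{g_nh_n}<1/2$, whence $\abs{g_nh_n-1}^2>1/4$ there, and therefore $\mu(\{\abs{g_n}<\delta\})\leq 4\,\tau_\mu(\abs{g_nh_n-1}^2)$ for every $\mu\in\Delta$. Choosing (as I may, along $\omega$) an index $n$ with $\sup_{\mu}\tau_\mu(\abs{g_nh_n-1}^2)<\eps/4$, the function $g:=g_n$ satisfies $\norm{f-g}_\infty<\eps$ and $\mu(g^{-1}(0))\leq\mu(\{\abs{g}<\delta\})<\eps$ for all $\mu\in\Delta$, which is exactly what Lemma \ref{pre-rr0} requires.

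I expect the only real friction to be the bookkeeping around the quotient $B$ in the two preliminary facts: identifying algebraic invertibility in $B$ with the uniform tracial estimate $\lim_{n\to\omega}\sup_\mu\tau_\mu(\abs{g_nh_n-1}^2)=0$ \emph{together} with a uniform bound on $(h_n)$, and arranging real, appropriately normalised lifts of self-adjoint classes. Once these are in place, the forward direction is a one-line diagonalisation of Lemma \ref{SBP-approx}, and the reverse direction is the elementary observation that a bounded inverse forces the near-zero set of $g_n$ to have small measure.
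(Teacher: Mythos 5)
Your proof is correct and follows essentially the same route as the paper's: the (SBP) $\Rightarrow$ (RR0) direction is the same diagonalisation of Lemma \ref{SBP-approx}, and the converse likewise extracts an invertible self-adjoint approximant and uses its bounded inverse to force the near-zero set of a suitable representative to have uniformly small measure. The only cosmetic difference is that you verify the uniform-norm criterion of Lemma \ref{pre-rr0} via an explicit Chebyshev estimate with the bounded inverse $(h_n)$, whereas the paper passes to the $\norm{\cdot}_{2,\Delta}$-criterion of Theorem \ref{SBP-2-norm} and uses functional calculus (namely that $\chi_\delta$ of an invertible element vanishes); both are equally valid.
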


\begin{proof}
Assume that $l^\infty(\mathrm{C}(X))/J_{2, \omega, \Delta}$ has real rank zero. Let $f\in \mathrm{C}_\Real(X)$, and let $\eps>0$ be arbitrary. Consider the image of the constant sequence $\overline{(f, f, ...)} \in l^\infty(\mathrm{C}(X))/J_{2, \omega, \Delta}.$ By the real rank zero assumption, there is an invertible self-adjoint element $\overline{(g_1, g_2, ...)} \in l^\infty(\mathrm{C}(X))/J_{2, \omega, \Delta}$ such that 
$$ \norm{\overline{(f, f, ...)} - \overline{(g_1, g_2, ...)}}_\infty <  \eps.$$ 
In particular,
\begin{equation}\label{weakly-dense}
 \norm{\overline{(f, f, ...)} - \overline{(g_1, g_2, ...)}}_{2, \omega, \Delta} <  \eps,
 \end{equation}
 where
 $$\norm{ (a_1, a_2, ...) }_{2, \omega, \Delta} = \lim_{n\to\omega} (\sup\{\tau(\abs{a_n}^2): \tau\in\Delta\}),\quad (a_1, a_2, ...) \in l^\infty(\mathrm{C}(X)). $$

Since $\overline{(g_1, g_2, ...)}$ is invertible, there is $\delta>0$ such that (with $\chi_\delta$ as in Theorem \ref{SBP-2-norm}) $$\chi_\delta(\overline{(g_1, g_2, ...)}) = 0.$$ Hence, $$\mathrm{dist}_{2, \omega, \Delta}((f-g_1, f- g_2, ... ), J_{2, \omega, \Delta}) <\eps \quad \mathrm{and} \quad (\chi_\delta(g_1), \chi_\delta(g_2), ... ) \in J_{2, \omega, \Delta},$$ and then, with $n$ sufficiently close to $\omega$, 
$$\norm{f - g_n}_{2, \Delta} < \eps \quad \mathrm{and} \quad \tau(\chi_\delta(g_n))< \eps,\quad \tau \in\Delta.$$
By Theorem \ref{SBP-2-norm}, $(X, \Delta)$ has the (SBP).

Now, assume that $(X, \Delta)$ has the (SBP), and let us show that $l^\infty(\mathrm{C}(X))/J_{2, \omega, \Delta}$ has real rank zero. Let $f\in l^\infty(\mathrm{C}(X))/J_{2, \omega, \Delta}$ be a self-adjoint element, and let $\eps>0$ be arbitrary. Pick a self-adjoint representative $(f_1, f_2, ...)$ of $f$. By Lemma \ref{SBP-approx} (with $f=f_n$, $n=1, 2, ...$), there are self-adjoint elements $g_1, g_2, ...$ and $h_1, h_2, ...$ in $\mathrm{C}(X)$ such that for each $n=1, 2, ...$, 
\begin{enumerate}
\item $\norm{f_n-g_n}_\infty < \eps$,
\item $\tau_\mu(\abs{g_nh_n - 1}^2) < 1/n$ for all $\mu\in\Delta$, and
\item $\norm{h_n}_\infty < 4/\eps$.
\end{enumerate}
Then, with $g:=\overline{(g_1, g_2, ...)}$ and $h:=\overline{(h_1, h_2, ...)}$ in $l^\infty(\mathrm{C}(X))/J_{2, \omega, \Delta}$, one has
$$\norm{f - g}_\infty < \eps\quad\mathrm{and}\quad gh = 1.$$ That is, $g$ is invertible. Since $\eps$ is arbitrary, this shows that $l^\infty(\mathrm{C}(X))/J_{2, \omega, \Delta}$ has real rank zero.
\end{proof}

Note that in the proof of $\mathrm{(RR0)} \Rightarrow \mathrm{(SBP)}$ above, it is the following weak real rank zero property that is used: each self-adjoint element is approximated in the trace norm $\norm{\cdot}_{2, \omega, \Delta}$ by invertible self-adjoint elements (see \eqref{weakly-dense}). Therefore, we have
\begin{cor}
The pair $(X, \Delta)$ has the (SBP) if (and only if) the invertible self-adjoint elements of $l^\infty(\mathrm{C}(X))/J_{2, \omega, \Delta}$ are dense in the set of self-adjoint elements with respect to $\norm{\cdot}_{2, \omega, \Delta}$.
\end{cor}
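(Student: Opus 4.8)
The plan is to deduce both implications from the already-established Theorem \ref{RR0=SBP}, by observing exactly which hypothesis each half of its proof actually consumes.

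For the ``only if'' direction I would argue that the stated density is genuinely weaker than real rank zero, so it is free. Assuming $(X,\Delta)$ has the (SBP), Theorem \ref{RR0=SBP} gives that $l^\infty(\mathrm C(X))/J_{2,\omega,\Delta}$ has real rank zero, so every self-adjoint element is approximated in the operator norm $\norm{\cdot}_\infty$ by invertible self-adjoint elements. Since $\tau(\abs{a_n}^2)\le\norm{a_n}^2\le\norm{(a_1,a_2,\dots)}_\infty^2$ for every $\tau\in\Delta$, $\norm{\cdot}_\infty$-convergence of a sequence forces $\norm{\cdot}_{2,\omega,\Delta}$-convergence; hence the topology associated with $\norm{\cdot}_{2,\omega,\Delta}$ is coarser than the norm topology, and norm-density passes to $\norm{\cdot}_{2,\omega,\Delta}$-density. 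This yields the required density of the invertible self-adjoint elements.

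For the ``if'' direction, which is the substantive half, I would reuse the proof of $\mathrm{(RR0)}\Rightarrow\mathrm{(SBP)}$ given inside Theorem \ref{RR0=SBP}, after noting that that argument never invoked the norm approximation but only the estimate \eqref{weakly-dense}. Concretely, given $f\in\mathrm C_\Real(X)$ and $\eps>0$, apply the hypothesis to the image of the constant sequence $\overline{(f,f,\dots)}$ to obtain an invertible self-adjoint $\overline{(g_1,g_2,\dots)}$ with $\norm{\overline{(f,f,\dots)}-\overline{(g_1,g_2,\dots)}}_{2,\omega,\Delta}<\eps$. Invertibility furnishes $\delta>0$ with $\chi_\delta(\overline{(g_1,g_2,\dots)})=0$, i.e.\ $(\chi_\delta(g_1),\chi_\delta(g_2),\dots)\in J_{2,\omega,\Delta}$, while the displayed estimate gives $\mathrm{dist}_{2,\omega,\Delta}((f-g_1,f-g_2,\dots),J_{2,\omega,\Delta})<\eps$. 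Choosing $n$ sufficiently close to $\omega$ then yields $\norm{f-g_n}_{2,\Delta}<\eps$ and $\tau(\chi_\delta(g_n))<\eps$ for all $\tau\in\Delta$, whereupon Theorem \ref{SBP-2-norm} concludes that $(X,\Delta)$ has the (SBP).

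I do not expect a genuine obstacle, since all of the analytic content is already packaged in Lemma \ref{ta-na}, Theorem \ref{SBP-2-norm}, and Theorem \ref{RR0=SBP}; the corollary is essentially a sharpening of the hypothesis in one direction of the latter. The only point deserving care is the bookkeeping in the ``if'' direction, namely that a single index $n$ can be taken close enough to $\omega$ to make both $\norm{f-g_n}_{2,\Delta}$ and $\tau(\chi_\delta(g_n))$ small simultaneously; this is immediate from the definition of the limit along $\omega$, since both quantities are governed by elements whose value under $\norm{\cdot}_{2,\omega,\Delta}$ is below $\eps$.
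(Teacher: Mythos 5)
Your proposal is correct and follows essentially the same route as the paper, which obtains the corollary by remarking that the $(\mathrm{RR0})\Rightarrow(\mathrm{SBP})$ half of Theorem \ref{RR0=SBP} only ever used the estimate \eqref{weakly-dense}, i.e.\ $\norm{\cdot}_{2,\omega,\Delta}$-density of the invertible self-adjoint elements, while the converse follows from Theorem \ref{RR0=SBP} together with the domination of $\norm{\cdot}_{2,\omega,\Delta}$ by $\norm{\cdot}_\infty$. No issues.
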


Thus, we also have the following (possibly slightly surprising) corollary:
\begin{cor}
The C*-algebra $l^\infty(\mathrm{C}(X))/J_{2, \omega, \Delta}$ has real rank zero if (and only if) its invertible self-adjoint elements are dense in the set of self-adjoint elements with respect to $\norm{\cdot}_{2, \omega, \Delta}$.
\end{cor}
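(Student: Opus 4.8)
The plan is to deduce this corollary directly from Theorem \ref{RR0=SBP} together with the preceding corollary, both of which identify a property of $B := l^\infty(\mathrm{C}(X))/J_{2, \omega, \Delta}$ with the (SBP) of the pair $(X, \Delta)$; no fresh construction is required. For the \emph{only if} direction, suppose $B$ has real rank zero, so that its invertible self-adjoint elements are dense in the self-adjoint elements with respect to the operator norm $\norm{\cdot}_\infty$. Since $\norm{a}_{2, \omega, \Delta} \leq \norm{a}_\infty$ for every $a \in B$, the same elements are a fortiori dense with respect to $\norm{\cdot}_{2, \omega, \Delta}$; this half uses nothing but the inequality between the two seminorms.

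For the \emph{if} direction, suppose the invertible self-adjoint elements of $B$ are dense in the self-adjoint elements with respect to $\norm{\cdot}_{2, \omega, \Delta}$. By the preceding corollary this is precisely the assertion that $(X, \Delta)$ has the (SBP). Applying the (SBP) $\Rightarrow$ (RR0) half of Theorem \ref{RR0=SBP} then yields that $B$ has real rank zero, completing the chain: trace-norm density of invertible self-adjoints $\Rightarrow$ (SBP) $\Rightarrow$ (RR0).

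There is essentially no new obstacle to overcome, since all the genuine work has already been carried out in the (SBP) $\Rightarrow$ (RR0) direction of Theorem \ref{RR0=SBP}, which itself rests on the explicit approximation produced in Lemma \ref{SBP-approx}. The only conceptual content is the (perhaps surprising) conclusion that density of invertibles in the much weaker seminorm $\norm{\cdot}_{2, \omega, \Delta}$ already forces density in the operator norm; this is explained entirely by the fact that, for the algebra $B$, both density statements are equivalent to the single property (SBP) of the underlying pair.
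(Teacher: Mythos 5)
Your argument is correct and follows exactly the route the paper intends: the forward direction is the trivial domination $\norm{\cdot}_{2,\omega,\Delta}\leq\norm{\cdot}_\infty$, and the reverse direction chains the preceding corollary (trace-norm density of invertibles $\Rightarrow$ (SBP), extracted from the proof of Theorem \ref{RR0=SBP}) with the (SBP) $\Rightarrow$ (RR0) half of that theorem. Nothing is missing.
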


The following corollary (directly of Theorem \ref{RR0=SBP}) is straightforward:
\begin{cor}
Consider $(X, \Delta_1)$ and $(Y, \Delta_2)$ such that $l^\infty(\mathrm{C}(X))/J_{2, \omega, \Delta_1} \cong l^\infty(\mathrm{C}(Y))/J_{2, \omega, \Delta_2}$. Then $(X, \Delta_1)$ has the (SBP) if, and only if, $(Y, \Delta_2)$ has the (SBP).
\end{cor}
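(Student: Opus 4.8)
The plan is to reduce the statement entirely to Theorem \ref{RR0=SBP}, which translates the (SBP) of a pair into the real rank zero property of the associated sequence algebra, and then to exploit the fact that real rank zero is an isomorphism invariant. First I would apply Theorem \ref{RR0=SBP} twice: the pair $(X, \Delta_1)$ has the (SBP) if, and only if, $l^\infty(\mathrm{C}(X))/J_{2, \omega, \Delta_1}$ has real rank zero, and likewise $(Y, \Delta_2)$ has the (SBP) if, and only if, $l^\infty(\mathrm{C}(Y))/J_{2, \omega, \Delta_2}$ has real rank zero. This converts the desired equivalence into the statement that one sequence algebra has real rank zero precisely when the other does.

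The bridge between the two is the observation that real rank zero is preserved under $*$-isomorphism. Indeed, the defining condition---that every self-adjoint element is a norm-limit of invertible self-adjoint elements---is formulated purely in terms of the norm, the adjoint, and invertibility, all of which any $*$-isomorphism preserves. A $*$-isomorphism is isometric, carries self-adjoint elements to self-adjoint elements and invertible elements to invertible elements, and hence sends a norm-approximating sequence of invertible self-adjoints to one in the target algebra; consequently the two isomorphic algebras have real rank zero simultaneously.

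Combining these two ingredients with the hypothesis $l^\infty(\mathrm{C}(X))/J_{2, \omega, \Delta_1} \cong l^\infty(\mathrm{C}(Y))/J_{2, \omega, \Delta_2}$, the two sequence algebras share the real rank zero property, and so by the two applications of Theorem \ref{RR0=SBP} the pairs $(X, \Delta_1)$ and $(Y, \Delta_2)$ share the (SBP). I expect essentially no obstacle in carrying this out; the only point requiring a remark is the standard fact that real rank zero passes across a $*$-isomorphism, and that is immediate from its C*-algebraic formulation.
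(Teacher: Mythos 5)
Your argument is exactly the one the paper intends: the corollary is stated as following directly from Theorem \ref{RR0=SBP}, and your route---translate (SBP) into real rank zero of the sequence algebra on both sides and observe that real rank zero is a $*$-isomorphism invariant---is precisely that proof. The proposal is correct and complete.
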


Inside $l^\infty(\mathrm{C}(X))/J_{2, \omega, \Delta}$, consider the sub-C*-algebra consisting of the classes of bounded sequences of $\mathrm{C}(X)$ which are $\norm{\cdot}_{2, \Delta}$-Cauchy. This C*-algebra is the closure of $\mathrm{C}(X)$ inside $l^\infty(\mathrm{C}(X))/J_{2, \omega, \Delta}$  under the norm $\norm{\cdot}_{2, \omega,\Delta}$. Denote it by $\overline{\mathrm{C}(X)}^{\Delta}$ and denote the restriction of $\norm{\cdot}_{2, \omega,\Delta}$ to $\overline{\mathrm{C}(X)}^{\Delta}$ still by $\norm{\cdot}_{2, \Delta}$.  


\begin{thm}\label{RR0=SBP-X}
If the pair $(X, \Delta)$ has the (SBP), then any constant sequence in $\mathrm{C}_{\Real}(X)$ can be approximated in $\norm{\cdot}_\infty$ by self-adjoint invertible elements of $\overline{\mathrm{C}(X)}^{\Delta}$.

On the other hand, if any constant sequence in $\mathrm{C}_{\Real}(X)$ can be approximated in $\norm{\cdot}_{2, \Delta}$ by self-adjoint invertible elements of $\overline{\mathrm{C}(X)}^{\Delta}$, then $(X, \Delta)$ has the (SBP).
\end{thm}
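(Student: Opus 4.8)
The plan is to prove the two implications in parallel with the two halves of Theorem \ref{RR0=SBP}, the only new ingredient being to keep track of the fact that the relevant approximants can be chosen inside the Cauchy subalgebra $\overline{\mathrm{C}(X)}^{\Delta}$ rather than merely in the full sequence algebra.

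For the forward direction, suppose $(X,\Delta)$ has the (SBP), fix $f \in \mathrm{C}_\Real(X)$ and $\eps > 0$. I would apply Lemma \ref{SBP-approx} to $f$ to produce sequences $(g_n), (h_n)$ in $\mathrm{C}_\Real(X)$ satisfying its conditions (1)--(4). The decisive point is condition (4): the estimates $\norm{g_n - g_{n+1}}_{2,\Delta} < 1/2^n$ and $\norm{h_n - h_{n+1}}_{2,\Delta} < 1/2^n$ say precisely that $(g_n)$ and $(h_n)$ are $\norm{\cdot}_{2,\Delta}$-Cauchy, so their classes $g := \overline{(g_n)}$ and $h := \overline{(h_n)}$ lie in $\overline{\mathrm{C}(X)}^{\Delta}$. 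Condition (2) forces $(g_nh_n - 1) \in J_{2,\omega,\Delta}$, whence $gh = 1$; since $g$ is self-adjoint, $h^*g = (gh)^* = 1$ as well, and then $h^* = h^*(gh) = (h^*g)h = h$, so $g$ is invertible in $\overline{\mathrm{C}(X)}^{\Delta}$ with inverse $h$. Finally condition (1) gives $\norm{f - g_n}_\infty < \eps$ for every $n$, so the quotient C*-norm obeys $\norm{\overline{(f,f,\dots)} - g}_\infty \le \sup_n \norm{f - g_n}_\infty \le \eps$; starting from $\eps/2$ yields the strict inequality. Thus $g$ is the desired self-adjoint invertible approximant.

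For the converse, suppose every constant sequence $\overline{(f,f,\dots)}$ with $f \in \mathrm{C}_\Real(X)$ can be approximated in $\norm{\cdot}_{2,\Delta}$ by self-adjoint invertibles of $\overline{\mathrm{C}(X)}^{\Delta}$; I aim to verify the criterion of Theorem \ref{SBP-2-norm}. Fix $f$ and $\eps > 0$ and choose a self-adjoint invertible $g = \overline{(g_n)} \in \overline{\mathrm{C}(X)}^{\Delta}$ (with each $g_n \in \mathrm{C}_\Real(X)$) such that $\norm{\overline{(f,f,\dots)} - g}_{2,\Delta} < \eps$. Invertibility of $g$ gives a spectral gap: there is $\delta > 0$ with $\mathrm{sp}(g) \cap [-\delta,\delta] = \emptyset$, so $\chi_\delta(g) = 0$. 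Since the quotient map is a $*$-homomorphism, continuous functional calculus is computed coordinatewise, giving $\overline{(\chi_\delta(g_n))} = \chi_\delta(g) = 0$, i.e. $\lim_{n\to\omega}\sup_{\mu\in\Delta}\tau_\mu(\chi_\delta(g_n)^2) = 0$ (here $\chi_\delta \ge 0$). I would then use the pointwise inequality $\chi_{\delta/2} \le \chi_\delta^2$ (both equal $1$ near the origin, and $\chi_\delta \equiv 1$ on $[-\delta/2,\delta/2]$, which contains the support of $\chi_{\delta/2}$), which by functional calculus yields $\tau_\mu(\chi_{\delta/2}(g_n)) \le \tau_\mu(\chi_\delta(g_n)^2)$ and hence $\lim_{n\to\omega}\sup_\mu \tau_\mu(\chi_{\delta/2}(g_n)) = 0$. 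Choosing $n$ close enough to $\omega$ produces a single $g_n \in \mathrm{C}_\Real(X)$ with $\norm{f - g_n}_{2,\Delta} < \eps$ and $\tau_\mu(\chi_{\delta/2}(g_n)) < \eps$ for all $\mu \in \Delta$, which is exactly condition (2) of Theorem \ref{SBP-2-norm}. So $(X,\Delta)$ has the (SBP).

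I expect the only genuine obstacle to lie in the converse: turning the abstract invertibility of $g$ in $\overline{\mathrm{C}(X)}^{\Delta}$ into usable information about the representing functions $g_n$. The two things to get right are (i) the identity $\chi_\delta(\overline{(g_n)}) = \overline{(\chi_\delta(g_n))}$, which follows from continuity of functional calculus together with the fact that $J_{2,\omega,\Delta}$ is an ideal, and (ii) the \emph{direction} of the comparison between cutoffs: the hypothesis delivers $L^2$-smallness of $\chi_\delta(g_n)$, whereas Theorem \ref{SBP-2-norm} wants smallness of $\tau_\mu(\chi_{\delta'}(g_n))$, and the elementary bound $\chi_{\delta/2} \le \chi_\delta^2$ is precisely what converts one into the other. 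The forward direction, by contrast, is essentially immediate from Lemma \ref{SBP-approx} once condition (4) is read as a Cauchy condition placing the approximants in $\overline{\mathrm{C}(X)}^{\Delta}$.
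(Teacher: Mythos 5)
Your proposal is correct and follows essentially the same route as the paper: the forward direction is read off from Lemma \ref{SBP-approx}, with condition (4) placing the classes of $(g_n)$ and $(h_n)$ in $\overline{\mathrm{C}(X)}^{\Delta}$ and condition (2) giving invertibility, while the converse repeats the argument of Theorem \ref{RR0=SBP}, extracting a spectral gap $\chi_\delta(g)=0$ and passing to a single coordinate $g_n$ to verify the criterion of Theorem \ref{SBP-2-norm}. Your extra care in converting $\norm{\chi_\delta(g_n)}_{2,\Delta}\to 0$ into smallness of $\tau_\mu(\chi_{\delta/2}(g_n))$ via $\chi_{\delta/2}\leq\chi_\delta^2$ is a harmless (and slightly cleaner) variant of the step the paper leaves implicit.
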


\begin{proof}
Assume that any self-adjoint element of $\mathrm{C}(X)\subseteq \overline{\mathrm{C}(X)}^{\Delta}$ is in the $\norm{\cdot}_{2, \omega, \Delta}$ closure of invertible self-adjoint elements. Let $f\in \mathrm{C}_\Real(X)$, and let $\eps>0$ be arbitrary. Consider the image of the constant sequence $\overline{(f, f, ...)} \in l^\infty(\mathrm{C}(X))/J_{2, \omega, \Delta}.$ By the assumption, there is an invertible self-adjoint element $\overline{(g_1, g_2, ...)} \in l^\infty(\mathrm{C}(X))/J_{2, \omega, \Delta}$ such that 
$$ \norm{\overline{(f, f, ...)} - \overline{(g_1, g_2, ...)}}_{2, \omega, \Delta} <  \eps.$$ Then there is $\delta>0$ such that $$\chi_\delta(\overline{(g_1, g_2, ...)}) = 0.$$ 
Hence $$\mathrm{dist}_{2, \omega, \Delta}((f-g_1, f- g_2, ... ), J_{2, \omega, \Delta}) <\eps \quad \mathrm{and} \quad (\chi_\delta(g_1), \chi_\delta(g_2), ... ) \in J_{2, \omega, \Delta},$$
and so, with sufficiently large $n$, one has
$$\norm{f - g_n}_{2, \Delta} < \eps \quad \mathrm{and} \quad \tau(\chi_\delta(g_n))< \eps,\quad \tau \in\Delta.$$
By Theorem \ref{SBP-2-norm}, $(X, \Delta)$ has the (SBP).

Now, assume that $(X, \Delta)$ has the (SBP). It follows from Lemma \ref{SBP-approx} that a constant sequence $(f)$ with $f \in \mathrm{C}_\Real(X)$ can be approximated by an invertible element of $\overline{\mathrm{C}(X)}^{\Delta}$ to within $\eps$,  as the sequences $(g_n)$ and $(f_n)$ are $\norm{\cdot}_{2, \Delta}$-Cauchy. 
\end{proof}

Since $\norm{\cdot}_{2, \Delta}$ is dominated by $\norm{\cdot}_{\infty}$, we have the following characterization of the (SBP):
\begin{cor}
The pair $(X, \Delta)$ has the (SBP) if, and only if, the invertible self-adjoint elements of $\overline{\mathrm{C}(X)}^{\Delta}$ are dense in the set of self-adjoint elements with respect to $\norm{\cdot}_{2, \omega, \Delta}$.
\end{cor}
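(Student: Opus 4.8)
The plan is to derive this corollary directly from Theorem \ref{RR0=SBP-X}, using only two elementary observations: that $\norm{\cdot}_{2, \Delta}$ is dominated by $\norm{\cdot}_\infty$ (as noted just above the statement), and that $\mathrm{C}(X)$, sitting inside $\overline{\mathrm{C}(X)}^{\Delta}$ as the constant sequences, is by construction $\norm{\cdot}_{2, \Delta}$-dense in $\overline{\mathrm{C}(X)}^{\Delta}$. I recall that on $\overline{\mathrm{C}(X)}^{\Delta}$ the norm $\norm{\cdot}_{2, \omega, \Delta}$ restricts to $\norm{\cdot}_{2, \Delta}$, so the density in the statement may be read with respect to either.

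For the converse direction (density $\Rightarrow$ (SBP)), I would simply specialize the hypothesis. Each $f \in \mathrm{C}_\Real(X)$ is a self-adjoint element of $\overline{\mathrm{C}(X)}^{\Delta}$ (the constant sequence $(f, f, \dots)$), so by assumption it can be approximated in $\norm{\cdot}_{2, \Delta}$ by self-adjoint invertible elements of $\overline{\mathrm{C}(X)}^{\Delta}$. This is exactly the hypothesis of the second half of Theorem \ref{RR0=SBP-X}, which then yields the (SBP).

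For the forward direction ((SBP) $\Rightarrow$ density), I would combine the first half of Theorem \ref{RR0=SBP-X} with the two observations above. Given a self-adjoint $a \in \overline{\mathrm{C}(X)}^{\Delta}$ and $\eps > 0$, I first approximate $a$ in $\norm{\cdot}_{2, \Delta}$ to within $\eps/2$ by a self-adjoint constant sequence: by density of $\mathrm{C}(X)$ there is $f_0 \in \mathrm{C}(X)$ with $\norm{a - f_0}_{2, \Delta} < \eps/2$, and replacing $f_0$ by its real part $\tfrac{1}{2}(f_0 + f_0^*) =: f \in \mathrm{C}_\Real(X)$ only improves the estimate, since $a$ is self-adjoint and taking self-adjoint parts is $\norm{\cdot}_{2, \Delta}$-contractive. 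Then, using the (SBP), the first half of Theorem \ref{RR0=SBP-X} provides a self-adjoint invertible $g \in \overline{\mathrm{C}(X)}^{\Delta}$ with $\norm{f - g}_\infty < \eps/2$, whence $\norm{f - g}_{2, \Delta} < \eps/2$ because $\norm{\cdot}_{2, \Delta} \leq \norm{\cdot}_\infty$. The triangle inequality then gives $\norm{a - g}_{2, \Delta} < \eps$, as required.

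The argument is essentially bookkeeping, so I do not expect a serious obstacle; the only point requiring a little care is the passage in the forward direction from constant sequences — the only elements that Theorem \ref{RR0=SBP-X} directly addresses — to arbitrary self-adjoint elements of $\overline{\mathrm{C}(X)}^{\Delta}$. This is handled by the density of $\mathrm{C}(X)$ in $\overline{\mathrm{C}(X)}^{\Delta}$ together with the continuity of taking self-adjoint parts, which together ensure that the self-adjoint constant sequences are $\norm{\cdot}_{2, \Delta}$-dense among all self-adjoint elements of $\overline{\mathrm{C}(X)}^{\Delta}$.
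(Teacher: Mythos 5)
Your proof is correct and follows essentially the same route as the paper, which derives the corollary from Theorem \ref{RR0=SBP-X} with only the one-line observation that $\norm{\cdot}_{2,\Delta}$ is dominated by $\norm{\cdot}_\infty$; you have simply made explicit the remaining bookkeeping (the $\norm{\cdot}_{2,\Delta}$-density of the self-adjoint constant sequences in the self-adjoint part of $\overline{\mathrm{C}(X)}^{\Delta}$), which the paper leaves to the reader.
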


\begin{rem}
In contrast to Theorem \ref{RR0=SBP}, we do not know whether the property (SBP) of $(X, \Delta)$ is characterized by the property (RR0) of $\overline{\mathrm{C}(X)}^{\Delta}$.
\end{rem}

\section{A stronger version of uniform property $\Gamma$ and the small boundary property}

In this section, let us use the characterization of the (SBP) of Theorem \ref{SBP-2-norm} to connect it to a notion of approximate divisibility on traces (Theorem \ref{Gamma2SBP}).

Recall (\cite{CETW-Gamma}) that a C*-algebra $A$ is said to have the uniform McDuff property if for each $n$, there is a unital embedding
$$\mathrm{M}_n(\Comp) \to (l^\infty(A)/J_{2, \omega, \Delta})\cap A',$$
and the C*-algebra $A$ is said to have uniform property $\Gamma$ if for each $n$, there is a partition of unity $$p_1, p_2, ..., p_n \in (l^\infty(A)/J_{2, \omega, \Delta})\cap A'$$ such that
$$\tau(p_iap_i) = \frac{1}{n}\tau(a),\quad a\in A,\ i=1, 2, ..., n,\  \tau \in \tr(A)_\omega,$$
where $\tr(A)_\omega$ is the set of traces of $l^\infty(A)$ of the form
$$\tau((a_i)) = \lim_{i\to\omega}\tau_i(a_i),\quad \tau_i \in \tr(A),$$
and $a$ is regarded as the constant sequence $(a) \in l^\infty(A)$.

Let us introduce the following stronger versions of these properties, restricting the partitions to belong to a subalgebra $D \subseteq A$: 

\begin{defn}\label{RGamma-defn}
Let $A$ be a C*-algebra and let $D\subseteq A$ be a sub-C*-algebra. 
The pair $(D, A)$ is said to have the strong uniform McDuff property if for each $n\in \mathbb N$, there is a unital embedding
$$\phi: \mathrm{M}_n(\Comp) \to (l^\infty(A)/J_{2, \omega, \Delta})\cap A'$$
such that
$$ \phi(e_{ii}) \in  l^\infty(D)/J_{2, \omega, \Delta},\quad i=1, ..., n. $$

The pair $(D, A)$ is said to have the strong uniform property $\Gamma$ if for each $n\in \mathbb N$, there is a partition of unity $$p_1, p_2, ..., p_n \in (l^\infty(D)/J_{2, \omega, \Delta}) \cap A'$$ such that
$$\tau(p_iap_i) = \frac{1}{n}\tau(a),\quad a\in A,\ \tau\in \tr(A)_\omega,$$
where $\tr(A)_\omega$ is as above.
\end{defn}

Let us now formulate weaker versions of these two properties, relaxing the approximate centrality of $p_1, ..., p_n$, or removing the algebra $A$ altogether.


\begin{defn}\label{Definition-WGamma}
A pair $(D, A)$, where $A$ is a unital C*-algebra and $D\subseteq A$ is a unital subalgebra, will be said to be (tracially) approximately divisible if there is $K>0$ such that for each $n\in \mathbb N$, there is a partition of unity $$p_1, p_2, ..., p_n \in l^\infty(D)/J_{2, \omega, \Delta}$$ such that
$$\tau(p_iap_i) \leq \frac{1}{n}K\tau(a),\quad a\in D^+,\ \tau\in \tr(A)_\omega,\ i=1, ..., n.$$

Similarly, a pair $(D, \Delta)$ (or the pair $(X, \Delta)$), where $D = \mathrm{C}(X)$ and $\Delta$ is a closed set of Borel probability measures, is said to be approximately divisible if there is $K>0$ such that for each  $n \in \mathbb N$, there is a partition of unity $$p_1, p_2, ..., p_n \in l^\infty(D)/J_{2, \omega, \Delta}$$ such that
$$\tau(p_iap_i) \leq \frac{1}{n} K \tau(a),\quad a\in D^+,\ \tau\in \Delta_\omega,\ i=1, ..., n,$$
where $\Delta_\omega$ is the set of traces of $l^\infty(D)$ of the form
$$\tau((a_i)) = \lim_{i\to\omega}\tau_i(a_i),\quad \tau_i \in \Delta.$$

\end{defn}

\begin{rem}
It is clear that the strong uniform McDuff property of $(D, A)$ implies the strong uniform property $\Gamma$ (and in particular approximate divisibility).
\end{rem}

The property of approximate divisibility can be formulated locally as follows:
\begin{lem}\label{loc-Gamma}
The pair $(D, \Delta)$ is approximately divisible if, and only if, there is $K>0$ such that for any finite set $\mathcal F \subseteq D^+$, any $\eps>0$, and any $n\in\mathbb N$, there are positive contractions $p_1, ..., p_n \in D$ such that
\begin{enumerate}
\item $\norm{p_ip_j}_{2, \Delta} <\eps $ if $i\neq j$,
\item $\norm{p_i -p_i^2}_{2, \Delta} < \eps$, $i=1, ..., n$,
\item $\norm{1-(p_1^2 + \cdots + p_n^2)}_{2, \Delta} < \eps$,
\item $\tau(p_ifp_i) <  \frac{1}{n}K\tau(f) + \eps$,  $i=1, ..., n$, $f\in\mathcal F$, $\tau \in \Delta$.
\end{enumerate}
\end{lem}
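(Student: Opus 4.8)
The plan is to treat this as a standard transfer between the local ($\norm{\cdot}_{2,\Delta}$-approximate) formulation and the sequence-algebra formulation, exploiting that the quotient map $\ell^\infty(D)\to\ell^\infty(D)/J_{2,\omega,\Delta}$ is a surjective $*$-homomorphism onto a C*-algebra and that $\Delta_\omega$ is \emph{exactly} the set of traces $\tau((a_m))=\lim_{m\to\omega}\tau_m(a_m)$ with $\tau_m\in\Delta$. I prove the two implications separately.

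For the forward implication, suppose $(D,\Delta)$ is approximately divisible with constant $K$, and fix a partition of unity $p_1,\dots,p_n\in\ell^\infty(D)/J_{2,\omega,\Delta}$ with $\tau(p_iap_i)\le\frac1n K\tau(a)$ for all $a\in D^+$, $\tau\in\Delta_\omega$. Since each $p_i$ is a positive contraction in the quotient C*-algebra, I lift it to a sequence $(p_i^{(m)})_m$ of positive contractions in $D$; because $p_i$ is idempotent, the $p_i$ are mutually orthogonal, and $\sum_i p_i^2=\sum_i p_i=1$, the defining property of $J_{2,\omega,\Delta}$ yields $\lim_{m\to\omega}\norm{p_i^{(m)}-(p_i^{(m)})^2}_{2,\Delta}=0$, $\lim_{m\to\omega}\norm{p_i^{(m)}p_j^{(m)}}_{2,\Delta}=0$ for $i\neq j$, and $\lim_{m\to\omega}\norm{1-\sum_i(p_i^{(m)})^2}_{2,\Delta}=0$. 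Thus (1)--(3) hold on an $\omega$-large set of indices $m$. For (4), given finite $\mathcal F\subseteq D^+$ and $\eps>0$, I argue by contradiction: were the set of $m$ for which some $\tau\in\Delta$, $f\in\mathcal F$, $i$ violates $\tau(p_i^{(m)}fp_i^{(m)})<\frac1n K\tau(f)+\eps$ to be $\omega$-large, then using finiteness of $\mathcal F$ and of the index set to fix $f$ and $i$, and choosing witnessing measures $\tau_m\in\Delta$, the trace $\lim_{m\to\omega}\tau_m\in\Delta_\omega$ would violate the global inequality. Hence (4) also holds on an $\omega$-large set, and any $m$ lying in the intersection of these finitely many $\omega$-large sets yields the desired $p_1,\dots,p_n\in D$.

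For the reverse implication I build a diagonal sequence. Since $X$ is metrizable, $D^+$ is $\norm{\cdot}_\infty$-separable, so I fix an increasing chain of finite sets $\mathcal F_1\subseteq\mathcal F_2\subseteq\cdots\subseteq D^+$ with $\norm{\cdot}_\infty$-dense union, together with $\eps_m\downarrow 0$. Applying the local hypothesis with $(\mathcal F_m,\eps_m,n)$ produces positive contractions $p_1^{(m)},\dots,p_n^{(m)}\in D$ satisfying (1)--(4); setting $P_i:=\overline{(p_i^{(m)})_m}\in\ell^\infty(D)/J_{2,\omega,\Delta}$, conditions (1)--(3) force the $P_i$ to be mutually orthogonal projections with $\sum_i P_i=\sum_i P_i^2=1$, a partition of unity. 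For the trace inequality, I first note that for fixed $a\in\bigcup_m\mathcal F_m$ condition (4) applies for all large $m$, so passing to $\lim_{m\to\omega}$ (and using $\eps_m\to0$) gives $\tau(P_iaP_i)\le\frac1n K\tau(a)$ for every $\tau\in\Delta_\omega$. I then extend this to all $a\in D^+$ by $\norm{\cdot}_{2,\Delta}$-approximation, using the estimates $\abs{\tau(p_iap_i)-\tau(p_ia'p_i)}\le\norm{a-a'}_{2,\Delta}$ and $\abs{\tau(a)-\tau(a')}\le\norm{a-a'}_{2,\Delta}$, uniform in $\tau\in\Delta$ and in $i$, which follow from commutativity of $D$ (so $p_iap_i=ap_i^2$) and Cauchy--Schwarz since the $p_i$ are contractions.

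The routine points are the functional-calculus lifting of a projection to positive contractions and the ultrafilter bookkeeping. The step requiring the most care is the final one in the reverse direction: condition (4) is assumed only on the finite sets $\mathcal F_m$, so I must verify that $\tau(P_i\,\cdot\,P_i)$ and $\tau(\cdot)$ are $\norm{\cdot}_{2,\Delta}$-continuous \emph{uniformly} over $\tau\in\Delta$ and over all $i$ in order to promote the inequality from the dense union $\bigcup_m\mathcal F_m$ to arbitrary $a\in D^+$; this is precisely where commutativity of $D=\mathrm{C}(X)$ and the contractivity of the $p_i$ enter.
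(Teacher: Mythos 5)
Your argument is correct. Note that the paper itself offers no proof of Lemma \ref{loc-Gamma}: the authors state it and move on, evidently regarding it as the routine ultrafilter transfer between the sequence-algebra formulation and its local, $\eps$-approximate version. What you have written is exactly the argument they are suppressing, and the details check out. In the forward direction, lifting each $p_i$ to positive contractions and observing that the exact relations $p_i=p_i^2$, $p_ip_j=0$, $\sum_i p_i^2=1$ in the quotient translate into membership of the corresponding difference sequences in $J_{2,\omega,\Delta}$ gives (1)--(3) on an $\omega$-large set, and your contradiction argument for (4) correctly uses the finiteness of $\mathcal F\times\{1,\dots,n\}$ to stabilize $(f,i)$ before forming the limit trace $(\tau_m)_\omega\in\Delta_\omega$; the finite intersection of these $\omega$-large sets is nonempty, which is all that is needed. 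In the reverse direction, the diagonal construction over an exhausting sequence $(\mathcal F_m,\eps_m)$ yields genuine mutually orthogonal projections summing to $1$ in $\ell^\infty(D)/J_{2,\omega,\Delta}$, and you correctly identified the only step that requires care: condition (4) is only assumed on the sets $\mathcal F_m$, so the inequality $\tau(P_iaP_i)\le\frac{1}{n}K\tau(a)$ must be promoted from the dense union to all of $D^+$ by a $\norm{\cdot}_{2,\Delta}$-continuity estimate uniform in $\tau\in\Delta_\omega$ and in $i$; the Cauchy--Schwarz bounds $\abs{\tau((a-a')p_i^2)}\le\norm{a-a'}_{2,\Delta}$ and $\abs{\tau(a)-\tau(a')}\le\norm{a-a'}_{2,\Delta}$ do this (commutativity of $D$ makes the first one clean, though it is not strictly necessary). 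The only implicit hypotheses you rely on --- that $\omega$ is a free ultrafilter and that $D=\mathrm{C}(X)$ is separable --- are both standing assumptions in the paper. No gaps.
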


It turns out that approximate divisibility implies the (SBP):
\begin{thm}\label{Gamma2SBP}
Let $D$ be a commutative C*-algebra and let $\Delta$ be a closed set of probability Borel measures. If $(D, \Delta)$ is approximately divisible, then $(D, \Delta)$ has the (SBP).  

The converse holds in the case that $\Delta$ is $\tr(D \rtimes \Gamma)$ restricted to $D \subseteq D \rtimes \Gamma$, where $\Gamma$ is a discrete amenable group acting freely on $D$. 
\end{thm}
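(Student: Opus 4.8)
The plan is to prove the two implications separately, using the trace-norm characterization of the (SBP) from Theorem \ref{SBP-2-norm} for the forward direction and a Rokhlin-tower construction for the converse.

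For the forward implication, fix $f\in\mathrm{C}_\Real(X)$ and $\eps>0$; by Theorem \ref{SBP-2-norm} it suffices to produce a continuous $g$ with $\norm{f-g}_{2,\Delta}<\eps$ and $\mu(g^{-1}(0))<\eps$ for all $\mu\in\Delta$. Choose $n>K/\eps$ (with $K$ the divisibility constant) and, via the local form of approximate divisibility in Lemma \ref{loc-Gamma} applied to a finite set $\mathcal F$ to be specified below, obtain positive contractions $p_1,\dots,p_n\in\mathrm{C}(X)$ that are approximately orthogonal, approximately idempotent, and approximately sum to $1$ in $\norm{\cdot}_{2,\Delta}$, and that satisfy the divisibility estimate. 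Fix a small $\eta<\eps$, put $s_i=\tfrac{i}{n}\eta$, and define $g=f-\sum_{i=1}^n s_i p_i^2$. Since the $p_i^2$ nearly form a partition of unity and $0\le s_i\le\eta$, one gets $\norm{f-g}_{2,\Delta}\le\eta+o(1)<\eps$. For the zero set, the point is that on the $i$-th block $g$ agrees approximately with $f-s_i$, so that $\{|g|<\delta\}$ is governed by $\sum_i p_i^2\,\mathbf 1[f\in B_i]$ with $B_i=(s_i-\delta,s_i+\delta)$; taking $\delta<\eta/2n$ makes the $B_i$ pairwise disjoint. Replacing the indicators $\mathbf 1[f\in B_i]$ by continuous functions $a_i$ with $\mathbf 1[f\in B_i']\le a_i\le \mathbf 1[f\in B_i]$ and taking $\mathcal F=\{a_i\}$, the divisibility estimate gives
\[
\mu(\{|g|<\delta\})\ \lesssim\ \sum_{i=1}^n \tau_\mu(p_i a_i p_i)\ \le\ \frac{K}{n}\sum_{i=1}^n \tau_\mu(a_i)\ \le\ \frac{K}{n}\,\mu\Big(f^{-1}\big(\textstyle\bigsqcup_i B_i\big)\Big)\ \le\ \frac{K}{n}\ <\ \eps,
\]
uniformly in $\mu\in\Delta$, as desired. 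The main technical point here is bookkeeping: the $p_i$ are only approximately a partition of unity in $\norm{\cdot}_{2,\Delta}$, so the identities above must be replaced by $\norm{\cdot}_{2,\Delta}$-estimates, using that $\chi_\delta$ is $2/\delta$-Lipschitz (commutative functional calculus) to pass from the $\norm{\cdot}_{2,\Delta}$-closeness $g\approx\sum_i p_i^2(f-s_i)$ to a bound on $\tau_\mu(\chi_\delta(g))$; all error terms are absorbed by first shrinking $\eps$ in Lemma \ref{loc-Gamma}.

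For the converse, identify (using freeness) $\Delta=\tr(D\rtimes\Gamma)|_D$ with the set $M_\Gamma(X)$ of $\Gamma$-invariant Borel probability measures on $X$ --- a trace on the crossed product restricts to an invariant measure, and conversely every invariant measure extends through the canonical conditional expectation --- and reduce, via Lemma \ref{loc-Gamma}, to constructing for each finite $\mathcal F\subseteq\mathrm{C}(X)^+$, each $\eps>0$, and each $n\in\mathbb N$ a family $p_1,\dots,p_n\in\mathrm{C}(X)$ satisfying the four approximate-partition conditions together with $\tau(p_i f p_i)\le\tfrac1n\tau(f)+\eps$. The construction is via a Rokhlin tower (castle): using amenability, choose a sufficiently invariant Følner set $F\subseteq\Gamma$; using freeness together with the (SBP), choose a base set $B$ with $\mu(\partial B)=0$ for all $\mu\in\Delta$ and such that the levels $\{gB:g\in F\}$ are disjoint and their union $Q$ has $\mu(X\setminus Q)<\eps$ uniformly over $\mu\in M_\Gamma(X)$. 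Partition $F=F_1\sqcup\cdots\sqcup F_n$ into $n$ near-invariant pieces and set $p_i$ to be a continuous approximation of $\sum_{g\in F_i}\mathbf 1_{gB}$ (available because each $\partial(gB)$ is $\mu$-null). Invariance of $\mu$ gives $\mu(p_i f)=\sum_{g\in F_i}\int_{gB}f\,d\mu$, and the aim is that the balanced, near-invariant splitting of $F$ makes each sub-castle capture about $\tfrac1n$ of the mass of each $f$, yielding the trace estimate with $K=1$.

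The hard part is the converse, and specifically the requirement that the castle and its $n$-fold splitting be \emph{uniform over the whole simplex} $M_\Gamma(X)$: the condition must hold for every $\tau\in\Delta_\omega$, i.e.\ $\sup_{\mu\in\Delta}\big(\mu(p_i f)-\tfrac1n\mu(f)\big)$ must tend to $0$ along the defining sequence, with no Bauer or finite-dimensionality assumption on $\partial\Delta$. Producing a single base $B$ whose tower covers all but $\eps$ of the mass simultaneously for \emph{all} invariant measures, and splitting $F$ into $n$ pieces that are simultaneously balanced for all of them, is exactly the content of the Ornstein--Weiss quasi-tiling / almost-finiteness-in-measure machinery; this is where the real work lies, the (SBP) entering precisely to supply the small-boundary bases that make the levels $\mathbf 1_{gB}$ approximable in $\mathrm{C}(X)$ with $\norm{\cdot}_{2,\Delta}$-negligible error.
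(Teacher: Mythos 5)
Your forward direction is correct and, despite the different packaging, is essentially the paper's argument: writing $g=f-\sum_i s_ip_i^2$ with distinct small shifts $s_i=\tfrac{i}{n}\eta$ is the same gluing as the paper's $g=\sum_i p_i\overline{(f_i)}p_i$ with the specific choice $f_i=f-s_i$ (in a commutative algebra, $\sum_i p_i(f-s_i)p_i=f\sum_ip_i^2-\sum_is_ip_i^2\approx f-\sum_is_ip_i^2$). What you gain is that the disjointness of the intervals $B_i$ makes the key input --- that at each point at most one $f_i$ is near zero --- completely trivial, whereas the paper derives it from a nerve-complex perturbation lemma (Corollary \ref{mult-pert}, via Lemma 5.7 of \cite{GLT-Zk}); your shifts give a genuinely more elementary proof of the needed instance of that corollary, and the divisibility estimate then yields $\sum_i\tau_\mu(p_ia_ip_i)\le\tfrac{K}{n}\sum_i\mu(f^{-1}(B_i''))\le\tfrac{K}{n}$ exactly as you say. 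Two small repairs: the sandwich for $a_i$ is written in the wrong direction (you need continuous $a_i$ with $\mathbf 1[f\in B_i]\le a_i\le\mathbf 1[f\in B_i'']$ for slightly larger, still disjoint, $B_i''$, so that $\tau_\mu(p_i\mathbf 1[f\in B_i]p_i)\le\tau_\mu(p_ia_ip_i)$ gives an \emph{upper} bound), and since the $p_i$ are only close to $\{0,1\}$-valued off a small-measure bad set, the containment $\{|g|<\delta\}\subseteq\bigcup_i\{p_i\ge 1/2\}\cap f^{-1}(B_i)$ costs a harmless absolute constant (e.g.\ $\mathbf 1[p_i\ge1/2]\le 4p_i^2$), so take $n$ a constant multiple of $K/\eps$ larger.

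The converse, however, is where your proposal has a genuine gap. You correctly identify that the issue is producing a castle and an $n$-fold splitting of its shapes that work \emph{uniformly over all of} $\mathcal M_\Gamma(X)$, and you correctly name the Ornstein--Weiss / almost-finiteness-in-measure machinery as what is needed --- but you then stop, saying "this is where the real work lies," without carrying out or even precisely invoking that construction. A single F{\o}lner tower over one base $B$ will not in general tile an action of an arbitrary amenable group; one needs a quasi-tiling by several tower shapes, and the passage from the (SBP) to a castle whose levels have $\mu$-null boundaries and whose union has uniformly small complement is precisely the content of the proof of Theorem 9.4 of \cite{KS-comparison}, which is what the paper cites at this point (obtaining in fact the strong uniform property $\Gamma$ for $(D,D\rtimes\Gamma)$, hence approximate divisibility with $K=1$). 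As written, your converse is an accurate description of the statement to be proved rather than a proof; either execute the Kerr--Szab\'o argument or cite it explicitly.
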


Let us start with some preparations:

\begin{lem}
Let $f \in \mathrm{C}(X)$ be a self-adjoint element, and let $\eps>0$. Then there exist $n\in \mathbb N$, and self-adjoint elements $f_1, f_2, ..., f_n \in \mathrm{C}(X)$, such that
\begin{enumerate}
\item $\norm{f - f_i}_\infty < \eps$, $i=1, 2, ..., n$, and
\item $$\frac{1}{n} |\{ 1 \leq i\leq n:  f_i(x) =0  \} | < \eps,\quad x \in X.$$
\end{enumerate} 
\end{lem}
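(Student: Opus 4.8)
The plan is to perturb $f$ by a family of distinct, small real constants, exploiting the elementary fact that a real-valued function takes any single prescribed value on an a priori uncontrolled set, but that the zero sets obtained by subtracting $n$ \emph{distinct} prescribed values are automatically pairwise disjoint. Concretely, I would fix an integer $n$ with $n > 1/\eps$, choose $n$ pairwise distinct real numbers $c_1, c_2, \ldots, c_n$ in the open interval $(-\eps, \eps)$ (for instance the equally spaced values $c_i = \eps(2i - n - 1)/(2n)$, which all lie in $(-\eps/2, \eps/2)$), and set $f_i = f - c_i \in \mathrm{C}(X)$. Since $f$ is self-adjoint (i.e.\ real-valued) and each $c_i$ is real, every $f_i$ is self-adjoint.

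Property (1) is then immediate: $\norm{f - f_i}_\infty = \abs{c_i} < \eps$ for every $i$.

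For property (2), the key observation is that for a fixed point $x \in X$ the value $f(x)$ is a single real number, so the equation $f_i(x) = f(x) - c_i = 0$ holds precisely when $c_i = f(x)$. As the constants $c_1, \ldots, c_n$ are pairwise distinct, at most one index $i$ can satisfy $c_i = f(x)$; hence $\abs{\{1 \leq i \leq n : f_i(x) = 0\}} \leq 1$ for every $x \in X$. Dividing by $n$ and using $n > 1/\eps$ yields $\frac{1}{n}\abs{\{1 \leq i \leq n : f_i(x) = 0\}} \leq \frac{1}{n} < \eps$, uniformly in $x$, which is exactly property (2).

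There is no substantial obstacle here: the statement is a purely pointwise, topology-free fact about $\mathrm{C}(X)$, and the only things to get right are that the shifts be distinct (so that each point $x$ can ``hit'' at most one of them), that they be small (to keep the sup-norm perturbation below $\eps$), and that $n$ be chosen with $n > 1/\eps$ (to make the vanishing fraction small). Note in particular that the set $\Delta$ plays no role whatsoever in this lemma; its purpose in the sequel is simply to furnish, for each fixed $f$, a pool of admissible perturbations whose common refinement spreads the vanishing locus thinly across $X$, to be combined afterward with an approximately central partition of unity coming from approximate divisibility.
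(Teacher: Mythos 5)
Your proof is correct, and it is genuinely simpler than the one in the paper. The paper replaces $f$ by a function $g=\sum_U f(x_U)\phi_U$ subordinate to an open cover on which $f$ varies by less than $\eps$, factors $g$ through the nerve complex $N(\mathcal U)$, and then invokes Lemma 5.7 of \cite{GLT-Zk} to perturb the diagonal map $(\Psi,\dots,\Psi):N(\mathcal U)\to\Real^n$ so that at each point at most $\dim N(\mathcal U)$ of the $n$ coordinates vanish; taking $n>\dim N(\mathcal U)/\eps$ gives the bound. Your observation is that for a \emph{single} real-valued $f$ the target is already one-dimensional, so no nerve machinery is needed: translating by $n$ pairwise distinct constants $c_i\in(-\eps,\eps)$ makes the zero sets of $f-c_1,\dots,f-c_n$ pairwise disjoint for the trivial reason that $f(x)$ can equal at most one $c_i$, whence the vanishing fraction is at most $1/n<\eps$. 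This recovers exactly the content of the paper's remark that $n$ may be taken to be $\lfloor 1/\eps\rfloor+1$ independently of $X$ (there obtained by choosing $\mathcal U$ of order $1$), and your equally spaced choice $c_i$ even yields the subsequent Corollary on $\abs{f_i(x)}<\delta$ directly with an explicit $\delta$ (any $\delta\leq\eps/2n$), bypassing the compactness argument. What the paper's longer route buys is a template that would still work if one had to perturb a map into a higher-dimensional target, where the dimension of the nerve genuinely enters; for this lemma as stated, your argument suffices.
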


\begin{proof}
Choose an open cover $\mathcal U$ of $X$ such that
$$\abs{f(x) - f(y)} < \eps,\quad x, y \in U,\ U \in\mathcal U.$$
Choose a partition of unity $\{\phi_U: U\in \mathcal U\}$, subordinate to $\mathcal U$, and choose $x_U \in U$ for each $U \in \mathcal U$. Consider the function
$$g:=\sum_{U \in \mathcal U} f(x_U)\phi_U,$$
and it is straightforward to verify that $$\norm{ f - g }_\infty < \eps.$$ 
It is straightforward to check that $g$ has the factorization
\begin{displaymath}
\xymatrix{
X \ar[r]^-{\Phi} & N(\mathcal U) \ar[r]^-{\Psi} & \Real}
\end{displaymath}
where $N(\mathcal U)$ is the nerve complex of $\mathcal U$ (see, for instance, Definition IX 2.2 of \cite{ES-book} for the definition of the nerve complex of an open cover), the map $\Phi: X \to N(\mathcal U)$ is given by 
$$ x \mapsto \sum_{U \in \mathcal U} \phi_U(x)[U],$$
and the map $\Psi: N(\mathcal U) \to \Real$ is the linear map
$$\sum_{U \in \mathcal U}\alpha_U [U] \mapsto \sum_{U \in \mathcal U} f(x_U)\alpha_U.$$

Pick $n > \mathrm{dim}(N(\mathcal U))/\eps$, and consider the map 
$$(\Psi, ..., \Psi): N(\mathcal U) \to \Real^n.$$ By Lemma 5.7 of \cite{GLT-Zk}, there is a map  $$(\Psi_1, ...  \Psi_n): N(\mathcal U) \to \Real^n$$ such that
$$\norm{\Psi - \Psi_i}_\infty < \eps,\quad 1 \leq i\leq n$$
and
$$|\{ 1 \leq i \leq n: \Psi_i(y) = 0  \}| < \mathrm{dim}(N(\mathcal U)),\quad y \in N(\mathcal U).$$
Then $$(f_1, ..., f_n) := (\Psi_1\circ \Phi, ..., \Psi_n\circ\Phi)$$
satisfies  the conclusion of the lemma.
\end{proof}

\begin{rem}
The open cover $\mathcal U$ can be chosen to have order $1$, and hence the simplex $N(\mathcal U)$ to have dimension $1$. Therefore, the number $n$ in the proof can be chosen to be $\lfloor 1/\eps \rfloor + 1$, which is independent of $X$.
\end{rem}

\begin{cor}\label{mult-pert}
Let $f \in \mathrm{C}(X)$ be a self-adjoint element, and let $\eps>0$. Then there exist $n\in \mathbb N$, $\delta>0$, and self-adjoint elements $f_1, f_2, ..., f_n \in \mathrm{C}(X)$ such that
\begin{enumerate}
\item $\norm{f - f_i}_\infty < \eps$, $i=1, 2, ..., n$, and
\item $$\frac{1}{n} |\{ 1 \leq i\leq n:  |f_i(x) | < \delta  \} | < \eps,\quad x\in X.$$
\end{enumerate} 
In particular,
$$\frac{1}{n}(\tau((f_1)_\delta) + \cdots + \tau((f_n)_\delta)) < \eps,\quad \tau\in\mathrm{T}(\mathrm{C}(X)),$$
where $(f)_\delta = \chi_\delta(f)$ with $\chi_\delta: \Real \to\Real$ defined by
$$\chi_\delta(t) = 
\left\{
\begin{array}{ll}
1, & |t| < \delta/2, \\
0, & |t| > \delta, \\
\textrm{linear}, & \textrm{otherwise}.
\end{array}
\right.
$$
\end{cor}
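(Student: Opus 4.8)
The plan is to bootstrap the preceding lemma. That lemma already produces $n$ and self-adjoint $f_1, \dots, f_n \in \mathrm{C}(X)$ with $\norm{f - f_i}_\infty < \eps$ and $\frac{1}{n}\abs{\{i : f_i(x) = 0\}} < \eps$ for every $x \in X$; the only thing missing from the corollary is the replacement of the exact condition $f_i(x) = 0$ by the open condition $\abs{f_i(x)} < \delta$ for a single $\delta > 0$ that works uniformly in $x$. So first I would apply the preceding lemma verbatim (assuming, as I may, that $\eps \le 1$) to obtain $n$ and $f_1, \dots, f_n$; condition (1) of the corollary is then immediate, and it remains only to produce $\delta$.

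The heart of the argument is a local-to-global (compactness) step. Fix $x_0 \in X$ and set $S_0 = \{i : f_i(x_0) = 0\}$; since $\frac{1}{n}\abs{S_0} < \eps \le 1$, the complement of $S_0$ is nonempty, so $\eta := \min\{\abs{f_i(x_0)} : i \notin S_0\} > 0$. By continuity of the finitely many $f_i$, there is an open neighbourhood $U_{x_0} \ni x_0$ on which $\abs{f_i(x)} > \eta/2$ for every $i \notin S_0$. Consequently, with $\delta_{x_0} := \eta/2$, for every $x \in U_{x_0}$ and every $0 < \delta \le \delta_{x_0}$ one has $\{i : \abs{f_i(x)} < \delta\} \subseteq S_0$, and hence $\abs{\{i : \abs{f_i(x)} < \delta\}} \le \abs{S_0} < \eps n$. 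Covering the compact space $X$ by finitely many such neighbourhoods $U_{x_1}, \dots, U_{x_k}$ and setting $\delta := \min_{j} \delta_{x_j} > 0$, I obtain $\frac{1}{n}\abs{\{i : \abs{f_i(x)} < \delta\}} < \eps$ for all $x \in X$, which is exactly condition (2).

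Finally, the ``in particular'' statement follows by pointwise domination. Since $\chi_\delta$ vanishes on $\{\abs{t} \ge \delta\}$ and is bounded by $1$, one has $(f_i)_\delta(x) \le \mathbf{1}_{\{\abs{f_i(x)} < \delta\}}$ for each $i$ and $x$, whence $\frac{1}{n}\sum_{i=1}^n (f_i)_\delta(x) \le \frac{1}{n}\abs{\{i : \abs{f_i(x)} < \delta\}}$. By (2) this count is, uniformly in $x$, bounded by $M/n$, where $M := \max_{x\in X}\abs{\{i : \abs{f_i(x)} < \delta\}}$ is an integer satisfying $M < \eps n$ (the count being integer-valued and bounded); hence $\frac{1}{n}\sum_i (f_i)_\delta(x) \le M/n < \eps$ for all $x$. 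Integrating this pointwise bound against the probability measure representing any $\tau \in \mathrm{T}(\mathrm{C}(X))$ yields $\frac{1}{n}\sum_i \tau((f_i)_\delta) \le M/n < \eps$, as required. I expect no serious obstacle in this last computation; the one point requiring care is the uniformity in $x$ of the threshold $\delta$, which is precisely what the compactness step secures — the a priori danger being that ever-smaller $\delta$ might be needed as $x$ ranges over $X$, and this is ruled out by passing to a finite subcover.
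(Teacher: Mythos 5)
Your proposal is correct and follows essentially the same route as the paper: apply the preceding lemma, then use continuity of the finitely many $f_i$ to get a local threshold $\delta_x$ on a neighbourhood of each point, and pass to a finite subcover to obtain a uniform $\delta$. The paper leaves the local step and the ``in particular'' integration implicit, whereas you spell them out (including the reduction to $\eps\le 1$ and the pointwise domination $(f_i)_\delta\le\mathbf{1}_{\{\abs{f_i}<\delta\}}$), but the argument is the same.
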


\begin{proof}
By the lemma above, there exist $n\in \mathbb N$, and self-adjoint elements $f_1, f_2, ..., f_n \in \mathrm{C}(X)$ such that
\begin{enumerate}
\item $\norm{f - f_i}_\infty < \eps$, $i=1, 2, ..., n$, and
\item $$\frac{1}{n} |\{ 1 \leq i\leq n:  |f_i(x) | =0  \} | < \eps,\quad x\in X.$$
\end{enumerate} 
Since $f_1, ...  f_n$ are continuous, for each $x\in X$, there exist a neighbourhood $U_x \ni x$ and $\delta_x>0$ such that
$$\frac{1}{n} |\{ 1 \leq i\leq n:  |f_i(y) | < \delta_x  \} | < \eps,\quad y\in U_x.$$
Then the corollary follows from the compactness of $X$.
\end{proof}

It is worth pointing out that the corollary above holds for a general unital C*-algebra.

\begin{cor}\label{mult-pert-general}
Let $A$ be a unital C*-algebra. Let $f \in A$ be a self-adjoint element, and let $\eps>0$. Then there exist $n\in \mathbb N$, $\delta>0$, and self-adjoint elements $f_1, f_2, ..., f_n \in A$ such that
\begin{enumerate}
\item $\norm{f - f_i}_\infty < \eps$, $i=1, 2, ..., n$, and
\item $$\frac{1}{n}(\tau((f_1)_\delta) + \cdots + \tau((f_n)_\delta)) < \eps,\quad \tau\in\mathrm{T}(A). $$
\end{enumerate} 
\end{cor}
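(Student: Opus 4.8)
The plan is to reduce the statement to the commutative case already established in Corollary \ref{mult-pert}, by passing to the commutative sub-C*-algebra generated by $f$. First I would set $B := \mathrm{C}^*(f, 1_A) \subseteq A$, the unital commutative sub-C*-algebra generated by the self-adjoint element $f$. By the continuous functional calculus, $B$ is isomorphic to $\mathrm{C}(\sigma(f))$, where the spectrum $\sigma(f) \subseteq \Real$ is a compact metrizable space and $f$ corresponds to the identity (inclusion) function $t \mapsto t$ on $\sigma(f)$.

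Next I would apply Corollary \ref{mult-pert} to the compact metrizable space $\sigma(f)$, the self-adjoint element $f \in \mathrm{C}(\sigma(f)) \cong B$, and the given $\eps$. This produces $n \in \mathbb N$, $\delta > 0$, and self-adjoint elements $f_1, \ldots, f_n \in B \subseteq A$ with $\norm{f - f_i}_\infty < \eps$ for each $i$, and $\frac{1}{n}\big(\tau((f_1)_\delta) + \cdots + \tau((f_n)_\delta)\big) < \eps$ for every $\tau \in \mathrm{T}(B)$.

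Finally I would transfer these two conclusions from $B$ to $A$. The norm estimate (1) is immediate, since the C*-norm on the sub-C*-algebra $B$ is the restriction of the one on $A$. For (2), the key point is that each $(f_i)_\delta = \chi_\delta(f_i)$ lies in $B$ (the functional calculus of an element of $B$ does not leave $B$), while any tracial state $\tau \in \mathrm{T}(A)$ restricts to a state on the commutative algebra $B$, which is automatically a trace, so that $\tau|_B \in \mathrm{T}(B)$. Hence $\tau((f_i)_\delta) = (\tau|_B)((f_i)_\delta)$, and the estimate supplied by Corollary \ref{mult-pert} for $\tau|_B$ is exactly the required estimate for $\tau$. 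If $\mathrm{T}(A) = \emptyset$, condition (2) is vacuous.

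There is no real analytic obstacle in this argument: the genuine content, namely the nerve-complex construction and the dimension-lowering perturbation of Lemma 5.7 of \cite{GLT-Zk}, has already been carried out in the commutative Corollary \ref{mult-pert}. The only thing I would need to check with any care is the compatibility of traces and functional calculus under the inclusion $B \subseteq A$, that is, that a tracial state on $A$ restricts to a tracial state on $B$ and that $\chi_\delta(f_i)$ is computed inside $B$ so that its trace value depends only on $\tau|_B$; both facts are routine.
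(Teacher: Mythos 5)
Your proposal is correct and is exactly the paper's argument: the paper's proof is the one-line statement that the result ``follows directly from Corollary \ref{mult-pert} applied to the sub-C*-algebra $\mathrm{C}^*\{1, f\}$,'' which is precisely your reduction via $B \cong \mathrm{C}(\sigma(f))$. Your additional checks (that $\chi_\delta(f_i)$ is computed inside $B$ and that any $\tau \in \mathrm{T}(A)$ restricts to a state, hence a tracial state, on the commutative algebra $B$) are the routine verifications the paper leaves implicit.
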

\begin{proof}
The statement follows directly from Corollary \ref{mult-pert} applied to the sub-C*-algebra $D:=\textrm{C*}\{1, f\}$.
\end{proof}

\begin{proof}[Proof of Theorem \ref{Gamma2SBP}]
To show that the pair $(D, \Delta)$ has the (SBP), by Corollary \ref{SBP-2-norm}, it is enough to show that for any self-adjoint element $f \in D$ and any $\eps>0$, there is a self-adjoint element $g \in D$ such that
\begin{enumerate}
\item $\| f - g\|_{2, \Delta} < \eps$, and
\item $\mu(g^{-1}(0)) < \eps$, $\mu \in \Delta$.
\end{enumerate}

By Corollary \ref{mult-pert}, for the given $\eps$, there exist $n\in\mathbb N$ and self-adjoint elements $$f_1, f_2, ..., f_n \in D$$  such that
\begin{equation}\label{mult-pert-eq-1}
\norm{f - f_i}_\infty < \eps,\quad i=1, 2, ..., n, 
\end{equation}
and 
there is $\delta>0$ such that 
\begin{equation*}
\frac{1}{n}(\tau((f_1)_\delta) + \cdots + \tau((f_n)_\delta)) < \eps/K,\quad \tau\in\mathrm{T}(\mathrm{C}(X)),
\end{equation*} 
where $K$ is the constant of approximate divisibility. In particular, regarding $(f_1)_\delta, ..., (f_n)_\delta$ as constant sequences in $D$, we have
\begin{equation}\label{mult-pert-eq-2}
\frac{1}{n}(\tau((f_1)_\delta) + \cdots + \tau((f_n)_\delta)) < \eps/K,\quad \tau\in\Delta_\omega.
\end{equation} 

By approximate divisibility, there are $$p_1, p_2, ..., p_n \in l^\infty(D)/J_{2, \omega, \Delta} $$
such that
\begin{equation}\label{r-gamma-cut-eq}
\tau(p_iap_i) \leq \frac{1}{n} K \tau(a),\quad a\in D^+,\ \tau\in \Delta_\omega.
\end{equation}

Consider the element $$g:=p_1\overline{(f_1)}p_1 + \cdots + p_n\overline{(f_n)}p_n \in l^\infty(D)/J_{2, \omega, \Delta}. $$
By \eqref{mult-pert-eq-1},
\begin{equation}\label{close-cond-1}
\norm{f-g}_{2, \omega, \Delta} = \norm{p_1\overline{(f-f_1)}p_1 + \cdots + p_n\overline{(f-f_n)}p_n}_{2, \omega, \Delta}  < \eps.
\end{equation}
Note that, for each $\tau \in \Delta_\omega$, by \eqref{r-gamma-cut-eq},
$$\tau(p_i\overline{((f_i)_\delta)} p_i) \leq \frac{1}{n}K\tau((f_i)_\delta),\quad i=1, ..., n, \ \tau\in\Delta_\omega,$$
and hence, together with \eqref{mult-pert-eq-2},
\begin{eqnarray}\label{close-cond-2}
\tau((g)_\delta) & = & \tau(p_1\overline{((f_1)_\delta)}p_1) + \cdots + \tau(p_n\overline{((f_n)_\delta)}p_n) \\
& \leq & \frac{1}{n}K(\tau((f_1)_\delta) + \cdots + \tau((f_n)_\delta)  ) \nonumber \\
& < & \eps.  \nonumber
\end{eqnarray}
Pick a representative sequence $g = \overline{(g_k)}$ with $g_k$, $k=1, 2, ...$, self-adjoint elements of $D$. By \eqref{close-cond-1} and \eqref{close-cond-2}, with some sufficiently large $k$, the function $g_k$ satisfies
\begin{enumerate}
\item $\norm{f - g_k}_{2, \Delta} < \eps$, and
\item $\tau((g_k)_\delta) < \eps$, $\tau\in \Delta$,
\end{enumerate}
as desired.

Now, assume that $(D, \tr(A))$ has the SBP (Definition \ref{defn-sbp}), where $A= D \rtimes\Gamma$ and $(D, \Gamma)$ is free and $\Gamma$ is amenable. It follows from the proof of Theorem 9.4 of \cite{KS-comparison} that $(D, A)$ has the strong uniform property $\Gamma$, and so $(D, \tr(A))$ is approximately divisible.
\end{proof}

\begin{cor}
Consider a C*-pair $(D, A)$. If $A$ has the strong uniform McDuff property, then $(D, \tr(A))$ has the (SBP).
\end{cor}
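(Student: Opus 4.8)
The plan is to chain three reductions so as to land on the implication already proved in Theorem~\ref{Gamma2SBP}, namely that approximate divisibility of a commutative pair yields the (SBP). Throughout I set $\Delta := \tr(A)|_D$, the set of restrictions to $D$ of the tracial states of $A$. Since $A$ is unital, $\tr(A)$ is weak*-compact and the restriction map $\sigma \mapsto \sigma|_D$ is weak*-continuous, so $\Delta$ is a closed set of states of $D$; identifying $D = \mathrm{C}(X)$, this is a closed set of Borel probability measures on $X$, and the assertion that $(D, \tr(A))$ has the (SBP) is literally the assertion that $(X, \Delta)$ does. (The conclusion presupposes $D$ commutative, which is forced by the fact that the (SBP) has only been defined for pairs $(\mathrm{C}(X), \Delta)$.)

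First I would invoke the remark following Definition~\ref{RGamma-defn}: the strong uniform McDuff property of $(D, A)$ passes to the strong uniform property $\Gamma$, and thence to approximate divisibility of $(D, A)$ with constant $K = 1$. Concretely, writing $p_i := \phi(e_{ii})$ for the images of the matrix units under the unital embedding $\phi : \Mat{n} \to (\ell^\infty(A)/J_{2,\omega,\Delta}) \cap A'$, one obtains mutually orthogonal projections summing to $1$, lying in $\ell^\infty(D)/J_{2,\omega,\Delta}$ and commuting with $A$. The identity $\tau(p_i a p_i) = \frac{1}{n}\tau(a)$ for $a \in A$, $\tau \in \tr(A)_\omega$ is the standard matrix-unit computation: using that the $p_i$ commute with $a$ and the trace property, all the quantities $\tau(\phi(e_{ii})a)$ coincide and sum to $\tau(a)$. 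Restricting to $a \in D^+$ gives the divisibility inequality for the pair $(D, A)$.

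The crux is the second step, transporting this inequality from the trace set $\tr(A)_\omega$ to the set $\Delta_\omega$ demanded in Definition~\ref{Definition-WGamma}. I would first observe that on $\ell^\infty(D)$ the two trace-kernel ideals coincide, $J_{2,\omega,\Delta} = J_{2,\omega,\tr(A)} \cap \ell^\infty(D)$, because for $(a_i) \in \ell^\infty(D)$ one has $|a_i|^2 \in D$ and hence $\sup_{\mu \in \Delta}\mu(|a_i|^2) = \sup_{\sigma \in \tr(A)}\sigma(|a_i|^2)$; this yields a canonical inclusion $\ell^\infty(D)/J_{2,\omega,\Delta} \hookrightarrow \ell^\infty(A)/J_{2,\omega,\tr(A)}$ in which the $p_i$ already reside. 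Next, given any $\tau \in \Delta_\omega$, say $\tau((a_i)) = \lim_{i\to\omega}\mu_i(a_i)$ with $\mu_i \in \Delta$, I would lift each $\mu_i$ to some $\sigma_i \in \tr(A)$ with $\sigma_i|_D = \mu_i$ (possible by the very definition of $\Delta$) and put $\tilde\tau((b_i)) := \lim_{i\to\omega}\sigma_i(b_i)$. Then $\tilde\tau \in \tr(A)_\omega$, it vanishes on $J_{2,\omega,\tr(A)}$ by Cauchy--Schwarz, and it restricts to $\tau$ on $\ell^\infty(D)/J_{2,\omega,\Delta}$. Since $p_i$ and $a \in D^+$ all lie in $\ell^\infty(D)/J_{2,\omega,\Delta}$, the inequality $\tilde\tau(p_i a p_i) \leq \frac{1}{n}\tilde\tau(a)$ from step one gives $\tau(p_i a p_i) \leq \frac{1}{n}\tau(a)$. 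Thus the same partition of unity witnesses the approximate divisibility of $(D, \Delta)$.

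Finally, with $D = \mathrm{C}(X)$ commutative and $(D, \Delta)$ approximately divisible, Theorem~\ref{Gamma2SBP} shows that $(D, \Delta) = (D, \tr(A))$ has the (SBP), which completes the argument. I expect the only genuinely delicate point to be the trace bookkeeping of the third step above --- verifying that the ideals agree on $\ell^\infty(D)$ and that every element of $\Delta_\omega$ is the restriction of some $\tr(A)_\omega$-trace --- whereas the passage from strong McDuff to divisibility and the final appeal to Theorem~\ref{Gamma2SBP} are essentially citations.
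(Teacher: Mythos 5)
Your argument is correct and is precisely the route the paper intends: the Remark following Definition~\ref{Definition-WGamma} records that the strong uniform McDuff property implies strong uniform property $\Gamma$ and hence approximate divisibility (your matrix-unit computation and the identification of $J_{2,\omega,\tr(A)|_D}$ with $J_{2,\omega,\tr(A)}\cap \ell^\infty(D)$, together with the lifting of $\Delta_\omega$-traces to $\tr(A)_\omega$-traces, are exactly the details the paper leaves implicit), after which Theorem~\ref{Gamma2SBP} yields the (SBP). No discrepancy with the paper's reasoning.
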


\begin{cor}
Consider C*-pairs $(D_1, A_1)$ and $(D_2, A_2)$, and assume that one of them, say $(D_1, \tr(A_1))$, is approximately divisible, then $(D_1 \otimes D_2, \mathrm{T}(A_1 \otimes A_2))$ has the (SBP).
\end{cor}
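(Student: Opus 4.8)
The plan is to prove the ostensibly stronger statement that $(D_1\otimes D_2, \mathrm{T}(A_1\otimes A_2))$ is approximately divisible, and then invoke Theorem \ref{Gamma2SBP}. Here $D_1\otimes D_2 = \mathrm{C}(X_1\times X_2)$ is commutative, and $\Delta := \mathrm{T}(A_1\otimes A_2)$ restricted to $D_1\otimes D_2$ is weak*-compact (being the continuous image of the trace simplex), hence a closed set of probability measures; so Theorem \ref{Gamma2SBP} applies once divisibility is in hand.

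First I would transport the divisibility partition from $D_1$ to the tensor product. The elementary observation is that restriction of traces $\tau\mapsto \tau(\,\cdot\,\otimes 1)$ carries $\mathrm{T}(A_1\otimes A_2)$ into $\tr(A_1)$, and since $\tau(\abs{x\otimes 1}^2) = \tau(\abs{x}^2\otimes 1)$ one gets $\sup_{\tau\in\mathrm{T}(A_1\otimes A_2)}\tau(\abs{x\otimes 1}^2)\leq \sup_{\sigma\in\tr(A_1)}\sigma(\abs{x}^2)$. Consequently $\overline{(x_k)}\mapsto \overline{(x_k\otimes 1)}$ is a well-defined unital $*$-homomorphism $\iota\colon l^\infty(D_1)/J_{2,\omega,\tr(A_1)}\to l^\infty(D_1\otimes D_2)/J_{2,\omega,\mathrm{T}(A_1\otimes A_2)}$. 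Taking the divisibility partition $p_1,\dots,p_n\in l^\infty(D_1)/J_{2,\omega,\tr(A_1)}$ of $(D_1,\tr(A_1))$ and setting $q_i := \iota(p_i)$ produces mutually orthogonal projections with $\sum_i q_i = 1$, keeping the same constant $K$.

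The substance of the argument is the divisibility inequality $\tau(q_i a q_i)\leq \frac1n K\tau(a)$ for $a\in(D_1\otimes D_2)^+$ and $\tau\in\mathrm{T}(A_1\otimes A_2)_\omega$. The difficulty is that the divisibility of $D_1$ controls only the $X_1$-direction, whereas $a$ is a function on $X_1\times X_2$ and $D_2$ carries no divisibility at all; a naive estimate replacing $a$ by $\sup_{x_2}a(\,\cdot\,,x_2)$ loses far too much. I would get around this by decoupling the $X_2$-dependence through slicing. First approximate $a$ in norm by $a' = \sum_{j} b_j\otimes c_j$, where $\{c_j\}\subseteq \mathrm{C}(X_2)^+$ is a finite partition of unity subordinate to small open sets and $b_j(x_1) = a(x_1, x_2^{(j)})\in \mathrm{C}(X_1)^+$ for sample points $x_2^{(j)}$; uniform continuity makes $\norm{a-a'}_\infty$ as small as desired, and this error is harmless since $q_i$ is a contraction and $\tau$ a state. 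Writing $\tau = \lim_{k\to\omega}\tau_k$ with $\tau_k\in\mathrm{T}(A_1\otimes A_2)$, the crucial point is that for each fixed $c_j$ the functional $x\mapsto \tau_k(x\otimes c_j)$ is a \emph{positive tracial functional} on $A_1$: the trace property follows from that of $\tau_k$ together with $(x\otimes 1)$ commuting with $(1\otimes c_j)$, and positivity from $x\otimes c_j\geq 0$ when $x,c_j\geq 0$. Normalizing these to states of $A_1$ and feeding the resulting sequence into the divisibility of $(D_1,\tr(A_1))$ applied to the fixed element $b_j\in D_1^+$ yields $\lim_\omega \tau_k((p_i^{(k)})^2 b_j\otimes c_j)\leq \frac1n K\lim_\omega \tau_k(b_j\otimes c_j)$; here I use $q_i a q_i = \overline{((p_i^{(k)})^2\otimes 1)\,a}$ and the multiplicativity of $\omega$-limits of bounded sequences, so that the normalizing masses $\tau_k(1\otimes c_j)$ cause no difficulty. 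Summing over the finitely many $j$ gives $\tau(q_i a' q_i)\leq \frac1n K\tau(a')$, and letting the approximation error tend to $0$ delivers the inequality for $a$.

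I expect this third step — the divisibility inequality — to be the main obstacle, and within it the essential device is the reduction, via the $c_j$-slices, to positive tracial functionals on $A_1$; this is exactly what allows the one-sided divisibility of $D_1$ to survive tensoring with an arbitrary $D_2$. Once the inequality is established, $(D_1\otimes D_2, \mathrm{T}(A_1\otimes A_2))$ is approximately divisible with constant $K$, and Theorem \ref{Gamma2SBP} completes the proof.
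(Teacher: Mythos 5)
Your proof is correct, and it reaches the conclusion by a genuinely different (and somewhat more self-contained) route than the paper. Both arguments use the same partition $q_i=p_i\otimes 1$, but the paper's proof rests on the identification $\partial \tr(A_1\otimes A_2)=\partial\tr(A_1)\times\partial\tr(A_2)$: one verifies the divisibility inequality of Lemma \ref{loc-Gamma} for product traces $\tau_1\otimes\tau_2$ by a Fubini computation in the $X_1$-variable, and then extends to all of $\tr(A_1\otimes A_2)$ by affineness and weak* continuity of the relevant functionals. You avoid the tracial boundary altogether: by slicing $a$ against a partition of unity $\{c_j\}$ in $D_2$ and observing that $x\mapsto\tau_k(x\otimes c_j)$ is a positive tracial functional on $A_1$, you feed arbitrary (limit) traces of $A_1\otimes A_2$ directly into the divisibility hypothesis for $(D_1,\tr(A_1))$. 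What your route buys is independence from the structure theory of extreme traces of tensor products and from the local reformulation in Lemma \ref{loc-Gamma}; what the paper's route buys is brevity, since the product structure of $\partial\tr(A_1\otimes A_2)$ makes the estimate a one-line Fubini argument. The small points you flag --- the vanishing normalizing masses $\tau_k(1\otimes c_j)$, handled by Cauchy--Schwarz and multiplicativity of $\omega$-limits, and the passage from the additive-error inequality for $a'$ to the exact inequality for $a$ by letting the approximation error tend to $0$ with $q_i$ fixed --- are exactly the right ones, and your treatment of them is sound.
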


\begin{proof}
Note that $\partial \tr(A_1 \otimes A_2) = \partial\tr(A_1) \times \partial\tr(A_2)$. Then, using Lemma \ref{loc-Gamma}, it is easy to see that $D_1\otimes D_2$ is approximately divisible with respect to $\tr(A_1\otimes A_2)$ (say, if $(D_1, A_1)$ is approximately divisible, then consider the corresponding elements $p_1\otimes 1_{D_2}$, ..., $p_n\otimes 1_{D_2}$ in $D_1\otimes D_2$).
\end{proof}

\begin{rem}
Is $(D_1 \otimes D_2, A_1 \otimes A_2)$ always approximately divisible. (Cf. Theorem 5.6 of \cite{EN-MD0}) 
\end{rem}

\section{The case that $\mathrm{T}(A)$ is a Bauer simplex}


As an application of Theorem \ref{Gamma2SBP}, in this section, let us show that if $A = \mathrm{C}(X)\rtimes \Int^d$ or $A$ is an AH algebra with diagonal maps, and if $\partial \mathrm{T}(A)$ is compact, then $A$ has the (SBP) (Theorem \ref{Bauer-SBP}), and hence $A$ is $\mathcal Z$-stable.

%


\begin{defn}
Let $D$ be a unital commutative C*-algebra, and let $\Delta \subseteq \mathrm{T}(D) $ be a closed subset. Let $c = \overline{(c_1, c_2, ... )} \in l^\infty(D)/J_{2, \omega, \Delta}$ be a non-zero positive contraction. Then $\Delta$ is said to be ample with respect to $c$ if for any $\tau_1, \tau_2, ... \in \Delta$, the limit tracial states
\begin{equation}\label{tr-1}
 D \ni x \mapsto \frac{1}{(\tau_n)_\omega(c)}(\tau_n)_\omega(x c) = \frac{\lim_{n\to\omega}\tau_n(xc_n)}{\lim_{n\to\omega} \tau_n(c_n)} \in \Comp
 \end{equation}
and
\begin{equation}\label{tr-2}
D \ni x \mapsto \frac{1}{1 - (\tau_n)_\omega(c)}(\tau_n)_\omega(x (1-c)) = \frac{\lim_{n\to\omega}\tau_n(x(1-c_n))}{1- \lim_{n\to\omega} \tau_n(c_n)} \in \Comp
\end{equation}
are still in $\Delta$ (when $(\tau_n)_\omega(c) = 0$ or $1$, only the trace \eqref{tr-2} or \eqref{tr-1}, respectively, should be considered), where $(\tau_n)_\omega$ is the limit trace of $(\tau_n)$ on $l^\infty(D)$.
\end{defn}

\begin{example}
Let $A$ be a unital C*-algebra, and let $D \subseteq A$ be a unital commutative sub-C*-algebra. 
Then the set $\tr(A)|_D$ is ample with respect to any positive contraction $$c \in (l^\infty(D)/J_{2, \omega, \tr(A)|_D}) \cap A',$$ where $A$ is regarded as the subalgebra of $l^\infty(A)/J_{2, \omega, \tr(A)}$ consisting of constant sequences,  as then, the asymptotic commutativity implies that 
\begin{equation*}
 A \ni x \mapsto \frac{1}{(\tau_n)_\omega(c)}(\tau_n)_\omega(x c) = \frac{\lim_{n\to\omega}\tau_n(xc_n)}{\lim_{n\to\omega} \tau_n(c_n)} \in \Comp
 \end{equation*}
 and
 \begin{equation*}
A \ni x \mapsto \frac{1}{1 - (\tau_n)_\omega(c)}(\tau_n)_\omega(x (1-c)) = \frac{\lim_{n\to\omega}\tau_n(x(1-c_n))}{1- \lim_{n\to\omega} \tau_n(c_n)} \in \Comp
\end{equation*}
 are tracial states of $A$.
 
\end{example}

\begin{lem}\label{extreme-cond}
Consider a pair $(D, \Delta)$ with $\Delta\subseteq \mathrm{T}(D)$ a closed convex subset with $\partial \Delta$ compact.
If $\Delta$ is ample with respect to a positive contraction $c = \overline{(c_n)} \in l^\infty(D)/J_{2, \omega, \Delta}$, 
then $$\lim_{k\to\omega} \sup_{\tau\in \partial\Delta}\abs{\tau(ac_k) - \tau(a)\tau(c_k)} = 0,\quad a\in D.$$
\end{lem}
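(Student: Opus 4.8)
The plan is to argue by contradiction, extracting a sequence of extreme traces that realizes the supremum and then using ampleness together with the extremality of the ultralimit to force the defect $\tau(ac_k)-\tau(a)\tau(c_k)$ to vanish along $\omega$. First I would fix $a\in D$ and set $s_k:=\sup_{\tau\in\partial\Delta}\abs{\tau(ac_k)-\tau(a)\tau(c_k)}$; since this is a bounded sequence, $\lim_{k\to\omega}s_k$ exists, and the goal is to show it is $0$. If not, there is $\eps>0$ with $\{k:s_k\geq\eps\}\in\omega$. For each $k$ the function $\tau\mapsto\abs{\tau(ac_k)-\tau(a)\tau(c_k)}$ is weak-$*$ continuous (all of $a$, $c_k$, $ac_k$ lie in $D$), so by compactness of $\partial\Delta$ I may choose $\tau_k\in\partial\Delta$ attaining $s_k$; thus $\abs{\tau_k(ac_k)-\tau_k(a)\tau_k(c_k)}\geq\eps$ on a set in $\omega$. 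Passing to the limit trace, $\tau_\infty:=\lim_{k\to\omega}\tau_k$ exists in the compact, hence closed, set $\partial\Delta$, so $\tau_\infty$ is extreme; write $\lambda:=(\tau_k)_\omega(c)=\lim_{k\to\omega}\tau_k(c_k)$.

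The main case is $0<\lambda<1$. Here ampleness applied to the sequence $(\tau_k)\subseteq\Delta$ produces the two tracial states $\sigma_1(x)=\tfrac1\lambda\lim_{k\to\omega}\tau_k(xc_k)$ and $\sigma_2(x)=\tfrac1{1-\lambda}\lim_{k\to\omega}\tau_k(x(1-c_k))$, both in $\Delta$. Since $\lim_{k\to\omega}\tau_k(xc_k)+\lim_{k\to\omega}\tau_k(x(1-c_k))=\lim_{k\to\omega}\tau_k(x)=\tau_\infty(x)$ for every $x\in D$, one obtains the convex decomposition $\tau_\infty=\lambda\sigma_1+(1-\lambda)\sigma_2$ inside $\Delta$. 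As $\tau_\infty$ is extreme and $0<\lambda<1$, this forces $\sigma_1=\sigma_2=\tau_\infty$. Evaluating $\sigma_1=\tau_\infty$ at $a$ gives $\lim_{k\to\omega}\tau_k(ac_k)=\lambda\,\tau_\infty(a)$, while $\lim_{k\to\omega}\tau_k(a)\tau_k(c_k)=\tau_\infty(a)\lambda$ as a product of convergent ultralimits; hence $\lim_{k\to\omega}\bigl(\tau_k(ac_k)-\tau_k(a)\tau_k(c_k)\bigr)=0$, contradicting the lower bound $\eps$.

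It remains to rule out the degenerate values $\lambda\in\{0,1\}$, and here ampleness is not even needed: positivity of $c$ does the work. Choosing the representatives $c_k$ to be positive contractions and using the commutative Cauchy--Schwarz bound $\abs{\tau_k(ac_k)}\leq\norm{a}\,\tau_k(c_k)$ together with its symmetric counterpart for $1-c_k$, when $\lambda=0$ both $\lim_{k\to\omega}\tau_k(ac_k)$ and $\lim_{k\to\omega}\tau_k(a)\tau_k(c_k)$ vanish, and when $\lambda=1$ both equal $\tau_\infty(a)$; in either case the defect tends to $0$ along $\omega$, again contradicting $\eps$. This exhausts all cases and proves $\lim_{k\to\omega}s_k=0$.

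The step I expect to be delicate is the passage to the extreme ultralimit: it is exactly the compactness of $\partial\Delta$ that guarantees $\tau_\infty$ is again an extreme point, so that the decomposition $\tau_\infty=\lambda\sigma_1+(1-\lambda)\sigma_2$ collapses, and it is exactly ampleness that guarantees $\sigma_1,\sigma_2\in\Delta$, so that extremality can be invoked at all. Without either hypothesis the argument fails, which is presumably why both appear in the statement.
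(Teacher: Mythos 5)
Your proof is correct and follows essentially the same route as the paper: argue by contradiction, take the ultralimit of extreme traces (which lands in $\partial\Delta$ by compactness), use ampleness to produce the convex decomposition $\tau_\infty=\lambda\sigma_1+(1-\lambda)\sigma_2$ inside $\Delta$, and collapse it by extremality. The only cosmetic difference is that you dispose of the degenerate cases $\lambda\in\{0,1\}$ separately via positivity, whereas the paper deduces $\lambda\neq 0,1$ directly from the assumed non-vanishing of the defect --- the same observation in contrapositive form.
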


\begin{proof}
If the statement were not true, then there would exist $a\in D$, $\eps>0$ and a sequence $(\tau_n)$ in $\Delta$ such that
$$\abs{\tau_n(ac_n) - \tau_n(a)\tau_n(c_n))} \geq \eps,\quad n = 1, 2, ... .$$ Consider $(\tau_n)_\omega \in \Delta_\omega$. Then \begin{equation}\label{controdition-eq-1}
(\tau_n)_\omega(ac) \neq (\tau_n)_\omega(a)(\tau_n)_\omega(c),
\end{equation}
and, in particular, 
$$(\tau_n)_\omega(c) \neq 0, 1.$$

Note that, for any $x \in D$,
\begin{equation}\label{lin-comb}
(\tau_n)_\omega(x) = (\tau_n)_\omega(c) \cdot \frac{(\tau_n)_\omega(xc)}{(\tau_n)_\omega(c)}  +  (1 - (\tau_n)_\omega(c)) \cdot \frac{(\tau_n)_\omega(x(1-c))}{1-(\tau_n)_\omega(c)}.
\end{equation}

Since $\Delta$ is ample for $c$, we have 
$$\frac{(\tau_n)_\omega(\ \cdot\ c)}{(\tau_n)_\omega(c)},\quad  \frac{(\tau_n)_\omega(\ \cdot \ (1-c))}{1-(\tau_n)_\omega(c)} \in \Delta.$$
Since $(\tau_n) \subseteq \partial \Delta$ and $\partial\Delta$ is compact, $(\tau_n)_\omega|_D \in \partial \Delta$; in particular, $(\tau_n)_\omega|_D$ is an  extreme point of $\Delta$. Therefore (note that $(\tau_n)_\omega(c) \neq 0$), by \eqref{lin-comb}, 
$$(\tau_n)_\omega(x) = \frac{(\tau_n)_\omega(x c)}{(\tau_n)_\omega(c)}, \quad x \in D,$$
which is
$$(\tau_n)_\omega(xc) = (\tau_n)_\omega(x) (\tau_n)_\omega(c), \quad x\in D.$$
This contradicts \eqref{controdition-eq-1}. 
\end{proof}


\begin{thm}\label{Bauer-case-A-free}
Consider a pair $(D, \Delta)$ with $\Delta\subseteq \tr(D)$ a closed convex subset with $\partial \Delta$ compact. Assume that, for each $n \in \mathbb N$, there is a partition of unity $p_1, p_2, ..., p_n \in l^\infty(D)/J_{2, \omega, \Delta}$ such that 
\begin{equation}\label{u-cut-1-A-free}
\tau(p_i) = \frac{1}{n},\quad \tau\in\Delta_\omega,\ i=1, ..., n,
\end{equation}
where (as before) $\Delta_\omega$ is the set of traces of $l^\infty(D)$ of the form
$$\tau((a_i)) = \lim_{i\to\omega}\tau_i(a_i),\quad \tau_i \in \Delta,$$ 
and $\Delta$ is ample with respect to each $p_i$, $i=1, ..., n$.
Then, the partition of unity $p_1, p_2, ..., p_n$ above satisfies
\begin{equation}\label{u-gamma-A-free}
\tau(p_iap_i) = \frac{1}{n}\tau(a),\quad a\in D,\quad \tau\in\Delta_\omega,\ i=1, ..., n.
\end{equation}
So, $(D, \Delta)$ is approximately divisible.
\end{thm}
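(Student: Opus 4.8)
The plan is to exploit that the quotient $\ell^\infty(D)/J_{2, \omega, \Delta}$ is commutative (since $D$ is), so that $p_i a p_i = a p_i^2 = a p_i$, which reduces the desired identity \eqref{u-gamma-A-free} to the covariance identity $\tau(ap_i) = \tau(a)\tau(p_i)$ for every $\tau \in \Delta_\omega$ and every $a \in D$; as $\tau(p_i) = 1/n$ by \eqref{u-cut-1-A-free}, this is exactly $\tau(ap_i) = \frac1n\tau(a)$. To prove it, I fix $i$, choose a self-adjoint representative $(c_k)$ of $p_i$, and write an arbitrary $\tau \in \Delta_\omega$ as $\tau = (\tau_k)_\omega$ with $\tau_k \in \Delta$. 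Since $\tau(ap_i) = \lim_{k\to\omega}\tau_k(ac_k)$ and $\tau(a) = \lim_{k\to\omega}\tau_k(a)$, it suffices to show $\lim_{k\to\omega}\abs{\tau_k(ac_k) - \tfrac1n\tau_k(a)} = 0$.

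The first step is to convert the normalization and the ampleness hypothesis into two uniform estimates over the extreme boundary. For the normalization, set $\gamma_k := \sup_{\sigma\in\partial\Delta}\abs{\sigma(c_k) - 1/n}$; the supremum is attained (by compactness of $\partial\Delta$ and continuity of $\sigma \mapsto \sigma(c_k)$) at some $\sigma_k \in \partial\Delta$, and applying \eqref{u-cut-1-A-free} to the trace $(\sigma_k)_\omega \in \Delta_\omega$ gives $\lim_{k\to\omega}\sigma_k(c_k) = 1/n$, whence $\lim_{k\to\omega}\gamma_k = 0$. For the ampleness, since $\Delta$ is ample with respect to $p_i$, Lemma \ref{extreme-cond} yields precisely that $\eps_k := \sup_{\sigma\in\partial\Delta}\abs{\sigma(ac_k) - \sigma(a)\sigma(c_k)} \to 0$ along $\omega$. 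Combining the two (using $\abs{\sigma(a)} \leq \norm{a}$) gives the single uniform bound
\[
\sup_{\sigma\in\partial\Delta}\abs{\sigma(ac_k) - \tfrac1n\sigma(a)} \leq \eps_k + \norm{a}\gamma_k \To 0 \quad (k\to\omega).
\]

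The final step is to pass from $\partial\Delta$ to all of $\Delta$ by Choquet theory. Since $\Delta$ is metrizable ($X$ being metrizable), compact, and convex with closed extreme boundary $\partial\Delta$, each $\tau_k$ is the barycenter of a probability measure $\nu_k$ supported on $\partial\Delta$, so that $\tau_k(x) = \int_{\partial\Delta}\sigma(x)\,d\nu_k(\sigma)$ for every $x \in D$, in particular for $x = a$ and $x = ac_k$. Integrating the uniform estimate against $\nu_k$ gives $\abs{\tau_k(ac_k) - \tfrac1n\tau_k(a)} \leq \eps_k + \norm{a}\gamma_k$, and taking the limit along $\omega$ establishes \eqref{u-gamma-A-free}. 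Approximate divisibility of $(D,\Delta)$ then follows at once by taking $K = 1$ and restricting to $a \in D^+$, so that $\tau(p_iap_i) = \frac1n\tau(a) \leq \frac1n K\tau(a)$.

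I expect the main obstacle to be exactly this transition from the extreme boundary to all of $\Delta$: a naive attempt to extend the bare covariance statement $\sigma(ac_k) \approx \sigma(a)\sigma(c_k)$ from extreme points to mixtures fails, because covariance is not preserved under convex combinations. The point of the argument is that the normalization $\tau(p_i) = 1/n$ forces $\sigma(c_k)$ to be uniformly close to the \emph{constant} $1/n$ over $\partial\Delta$, so the two ingredients collapse into the single affine estimate $\sup_{\sigma\in\partial\Delta}\abs{\sigma(ac_k) - \tfrac1n\sigma(a)} \to 0$, which \emph{does} survive barycentric averaging. Verifying that this collapse is legitimate (and that the representing measures $\nu_k$ may indeed be taken on the closed set $\partial\Delta$) is the delicate part of the proof.
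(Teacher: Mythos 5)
Your proof is correct and follows essentially the same route as the paper's: Lemma \ref{extreme-cond} gives the asymptotic covariance over $\partial\Delta$, the normalization $\tau(p_i)=1/n$ replaces $\sigma(c_k)$ by the constant $1/n$, and the result passes to all of $\Delta_\omega$. You merely make explicit two steps the paper compresses into ``which implies \eqref{u-gamma-A-free}'' --- the commutativity reduction $p_iap_i=ap_i$ and the barycentric (Choquet) passage from $\partial\Delta$ to $\Delta$ --- and both are handled correctly.
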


\begin{proof}
Let $a \in D$ and $1 \leq i\leq n$ be arbitrary. Pick a representative $(c_k) \in l^\infty(D)$ for $p_i$. By Lemma \ref{extreme-cond},
$$\lim_{k\to\omega} \sup_{\tau\in \partial\Delta}\abs{\tau(ac_k) - \tau(a)\tau(c_k)} = 0,\quad a\in D.$$ Together with \eqref{u-cut-1-A-free}, one has
$$\lim_{k\to\omega} \sup_{\tau\in \partial\Delta}\abs{\tau(ac_k) - \tau(a)\frac{1}{n}} = 0,\quad a\in D,$$
which implies  \eqref{u-gamma-A-free}.
\end{proof}

The theorem above in particular applies to a C*-pair $(D, A)$ where $\Delta = \tr(A)|_D$. In this case, one actually only has to assume $\tr(A)$ to be Bauer (in general this does not obviously imply $\tr(A)|_D$ is Bauer), and the proof is even shorter:

\begin{thm}\label{Bauer-case}
Consider a C*-pair $(D, A)$ where $D$ is a unital commutative sub-C*-algebra of $A$, and assume that $\mathrm{T}(A)$ is a Bauer simplex. If $p_1, p_2, ..., p_n \in (l^\infty(D)/J_{2, \omega, \mathrm{T}(A)|_D}) \cap A'$, where $n\in \mathbb N$, is a partition of unity such that 
\begin{equation*}
\tau(p_i) = \frac{1}{n},\quad \tau\in \mathrm{T}(A)_\omega,\ i=1, ..., n,
\end{equation*}
where $\mathrm{T}(A)_\omega$ is the set of traces of $l^\infty(A)$ of the form
$$\tau((a_i)) = \lim_{i\to\omega}\tau_i(a_i),\quad \tau_i \in \mathrm{T}(A),$$
then,
\begin{equation}\label{u-gamma}
\tau(p_iap_i) = \frac{1}{n}\tau(a),\quad a\in A,\quad \tau\in \mathrm{T}(A)_\omega,\ i=1, ..., n.
\end{equation}
\end{thm}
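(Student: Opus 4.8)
The plan is to reduce the claimed identity to the factorization of the central sequence $p_i$ against $a$ at each extreme trace, and then to average over the compact boundary $\partial\mathrm{T}(A)$.

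First I would dispose of the passage from $p_iap_i$ to $ap_i$: since each $p_i$ is a projection, the trace property gives, for every $\tau\in\mathrm{T}(A)_\omega$,
$$\tau(p_iap_i) = \tau(ap_ip_i) = \tau(ap_i).$$
Thus it suffices to prove $\tau(ap_i) = \frac1n\tau(a)$ for all $a\in A$ and $\tau\in\mathrm{T}(A)_\omega$. Fixing $a$ and a representative $(c_k)\in l^\infty(D)$ of $p_i$, and writing $\tau = (\tau_k)_\omega$, the target becomes $\lim_{k\to\omega}\tau_k(ac_k) = \frac1n\lim_{k\to\omega}\tau_k(a)$.

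The heart of the argument is the factorization
$$\lim_{k\to\omega}\sup_{\tau\in\partial\mathrm{T}(A)}\abs{\tau(ac_k) - \tau(a)\tau(c_k)} = 0,\quad a\in A.$$
Because $p_i\in A'$, the example preceding Lemma \ref{extreme-cond} shows that the conditional functionals $x\mapsto(\tau_k)_\omega(xc)/(\tau_k)_\omega(c)$ and $x\mapsto(\tau_k)_\omega(x(1-c))/(1-(\tau_k)_\omega(c))$ are genuine tracial states of all of $A$ (asymptotic centrality is exactly what makes them positive and tracial). This is the ampleness input, now available for $A$ and not merely for $D$. I would then run the proof of Lemma \ref{extreme-cond} with $A$ in place of $D$ and $\mathrm{T}(A)$ in place of $\Delta$ --- that proof uses only the ampleness property and the compactness of the extreme boundary, so it transfers to this (noncommutative) setting: a putative failure produces extreme traces $\tau_k\in\partial\mathrm{T}(A)$ whose limit $(\tau_k)_\omega$ is still extreme, by compactness of $\partial\mathrm{T}(A)$, yet is written as a nontrivial convex combination of two traces through $c$, forcing it to coincide with its conditional traces and hence forcing the factorization. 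The Bauer hypothesis enters precisely here, to keep the limit of extreme traces extreme.

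Finally I would combine this with the normalization. From $\tau(p_i) = 1/n$ for all $\tau\in\mathrm{T}(A)_\omega$, a diagonal argument against varying sequences $\sigma_k\in\mathrm{T}(A)$ yields $\lim_{k\to\omega}\sup_{\tau\in\mathrm{T}(A)}\abs{\tau(c_k) - 1/n} = 0$; substituting this into the factorization gives $\sup_{\sigma\in\partial\mathrm{T}(A)}\abs{\sigma(ac_k) - \frac1n\sigma(a)}\to 0$ as $k\to\omega$. To reach an arbitrary limit trace $(\tau_k)_\omega$, I would represent each $\tau_k$ as the barycenter $\int_{\partial\mathrm{T}(A)}\sigma\,d\mu_k(\sigma)$ of a probability measure on the compact set $\partial\mathrm{T}(A)$ and integrate the uniform estimate, obtaining $\abs{\tau_k(ac_k) - \frac1n\tau_k(a)}\to 0$ and therefore $\tau(ap_i) = \frac1n\tau(a)$. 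I expect the factorization step to be the only real obstacle: one must check that a central sequence decouples from $a$ at every extreme trace and that this decoupling is uniform over $\partial\mathrm{T}(A)$; the reduction and the barycentric averaging are routine once that estimate is in hand.
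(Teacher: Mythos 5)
Your proof is correct, and its overall skeleton coincides with the paper's: reduce $\tau(p_iap_i)$ to $\tau(ap_i)$, establish the decoupling $\sup_{\tau\in\partial\mathrm{T}(A)}\abs{\tau(ac_k)-\tau(a)\tau(c_k)}\to 0$ along $\omega$, substitute the normalization $\tau(c_k)\to 1/n$, and pass from $\partial\mathrm{T}(A)$ to all of $\mathrm{T}(A)_\omega$ by barycentric averaging. The one place where you diverge is the justification of the decoupling step: the paper simply cites Proposition 3.1 of \cite{CETW-Gamma} (which is exactly the statement that an asymptotically central sequence factorizes against every element at the extreme traces of a Bauer simplex), whereas you re-derive it by running the ampleness-plus-extremality argument of Lemma \ref{extreme-cond} with $A$ in place of $D$, using the Example before that lemma to see that asymptotic centrality of $p_i$ makes the conditional functionals $x\mapsto(\tau_k)_\omega(xc)/(\tau_k)_\omega(c)$ genuine tracial states of all of $A$. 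That adaptation is sound (positivity and traciality of the conditional functionals follow from $\tau_k(xc_k)=\tau_k(c_k^{1/2}xc_k^{1/2})$ and the asymptotic commutation in $\Vert\cdot\Vert_{2,\tau}$), and it correctly works with extremality and compactness in $\mathrm{T}(A)$ itself rather than in $\mathrm{T}(A)|_D$ --- which matters, since the paper explicitly notes that $\mathrm{T}(A)$ being Bauer does not obviously imply $\mathrm{T}(A)|_D$ is. So your route is self-contained (essentially folding Theorem \ref{Bauer-case-A-free} and Lemma \ref{extreme-cond}, transplanted to the noncommutative algebra, into the proof) at the cost of being longer; the paper's route is shorter because it outsources the key estimate to the literature.
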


\begin{proof}

Let $a \in A$ and $1 \leq i\leq n$ be arbitrary. Let $p_1, p_2, ..., p_n \in (l^\infty(D)/J_{2, \omega, \mathrm{T}(A)|_D}) \cap A'$ be a partition of unity such that 
\begin{equation}\label{u-cut-1}
\tau(p_i) = \frac{1}{n},\quad \tau\in\mathrm{T}(A)_\omega,\ i=1, ..., n.
\end{equation}

Pick a representative $(b_k) \in l^\infty(D)$ for $p_i$. Since $p_i \in (l^\infty(D)/J_{2, \omega, \mathrm{T}(A)|_D}) \cap A'$, one has
$$\lim_{k\to\omega}\norm{ab_k - b_ka}_{2, \tr(A)} = 0,\quad a\in A.$$ Since $\mathrm{T}(A)$ is a Bauer simplex,  by Proposition 3.1 of \cite{CETW-Gamma}, 
$$\lim_{k\to\omega} \sup_{\tau\in \partial\tr(A)}\abs{\tau(ab_k) - \tau(a)\tau(b_k)} = 0,\quad a\in A.$$
By \eqref{u-cut-1},
$$\lim_{k\to \omega}\sup_{\tau\in\tr(A)}\abs{\tau(b_k) - \frac{1}{n}} = 0,$$
and hence
$$\lim_{k\to\omega} \sup_{\tau\in \partial\tr(A)}\abs{\tau(ab_k) - \frac{1}{n}\tau(a)} = 0,\quad a\in A,$$
which implies \eqref{u-gamma}.
\end{proof}


The uniform Rokhlin property (URP) and the property of comparison of open sets (COS) were introduced in \cite{Niu-MD-Z}. Every free and minimal dynamical system $(X, \Int^d)$ has the (URP) and (COS) (\cite{Niu-MD-Zd}); in the case that $\Gamma$ is finitely generated and has subexponential growth, if a free and minimal dynamical system $(X, \Gamma)$ has a Cantor factor, then it has the (URP) and (COS) (\cite{Niu-MD-Z}). For free and minimal dynamical systems $(X, \Gamma)$ with the (URP) and (COS), if $(X, \Gamma)$ has the (SBP), then the C*-algebra $\mathrm{C}(X)\rtimes\Gamma$ is $\mathcal Z$-absorbing (\cite{Niu-MD-Z-absorbing}).

\begin{thm}\label{Bauer-SBP}
Let $(X, \Gamma)$ be a topological dynamical system with the (URP), where $\Gamma$ is an infinite  discrete amenable group. Assume that $\mathcal M_1(X, \Gamma)$ is a Bauer simplex (but with no restriction on $\mathrm{dim}(\partial \mathcal M_1(X, \Gamma))$). Then $(X, \Gamma)$ has the (SBP). 

In particular, if $(X, \Gamma)$ is free and minimal, and has the (URP) and (COS), the C*-algebra $A:=\mathrm{C}(X)\rtimes\Gamma$ is $\mathcal Z$-absorbing, i.e., $A \cong A \otimes \mathcal Z$.
\end{thm}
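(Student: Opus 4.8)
The plan is to verify the hypotheses of Theorem~\ref{Bauer-case-A-free} for the pair $(\mathrm{C}(X),\Delta)$ with $\Delta=\mathcal M_1(X,\Gamma)$, the compact convex set of $\Gamma$-invariant probability measures, and then invoke Theorem~\ref{Gamma2SBP}. Since $\Delta$ is assumed Bauer, $\partial\Delta$ is compact, so it remains only, for each $n$, to produce a partition of unity (mutually orthogonal projections summing to $1$) $p_1,\dots,p_n\in l^\infty(\mathrm{C}(X))/J_{2,\omega,\Delta}$ with $\tau(p_i)=1/n$ for all $\tau\in\Delta_\omega$ and with $\Delta$ ample with respect to each $p_i$. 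The ``in particular'' assertion is then immediate: Part~1 supplies the (SBP), and for a free minimal system with the (URP) and (COS) the $\mathcal Z$-absorption of $\mathrm{C}(X)\rtimes\Gamma$ is exactly the result of \cite{Niu-MD-Z-absorbing} recalled above.

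To build the $p_i$ I would feed a large finite $K\subseteq\Gamma$ and a small $\eps>0$ into the (URP), obtaining Rokhlin towers with bases $B_1,\dots,B_m$ and $(K,\eps)$-invariant (F\o lner) shapes $\Gamma_1,\dots,\Gamma_m$ whose levels $\{\gamma B_j:\gamma\in\Gamma_j\}$ are pairwise disjoint and cover all but an $\eps$-fraction of $X$ uniformly over $\Delta$. Because every $\mu\in\Delta$ is invariant, the levels of a fixed tower all carry the same mass, $\mu(\gamma B_j)=\mu(B_j)$. I would then colour the levels: partition each shape $\Gamma_j$ into $n$ classes $\Gamma_j^{(1)},\dots,\Gamma_j^{(n)}$ of nearly equal cardinality and let $p_k$ be a continuous approximation of $\sum_j\sum_{\gamma\in\Gamma_j^{(k)}}\chi_{\gamma B_j}$, the leftover $\eps$-set being distributed arbitrarily. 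Equality of the class sizes forces $\mu(p_k)\approx 1/n$ for every $\mu\in\Delta$; letting $K\uparrow\Gamma$, $\eps\downarrow 0$ and passing to the limit along $\omega$ turns the $p_k$ into genuine orthogonal projections with $\tau(p_k)=1/n$ for $\tau\in\Delta_\omega$.

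The decisive requirement is that the colouring be arranged so that each $p_k$ is \emph{approximately invariant}, i.e.\ $\norm{p_k-p_k\circ\gamma}_{2,\Delta}\to 0$ for every $\gamma\in\Gamma$; equivalently the level-colourings $c_j\colon\Gamma_j\to\{1,\dots,n\}$ should satisfy $c_j(s\gamma)=c_j(\gamma)$ off a F\o lner-small set, for each $s\in K$. Granting this, ampleness comes for free: if $\tau=(\tau_n)_\omega$ with each $\tau_n\in\Delta$ invariant and $c=p_k$, then invariance of the $\tau_n$ together with $\norm{c\circ\gamma^{-1}-c}_{2,\Delta}\to 0$ gives $\tau((x\circ\gamma)c)=\tau(x\,(c\circ\gamma^{-1}))=\tau(xc)$ for all $x$, so the conditioned functionals $x\mapsto\tau(xc)/\tau(c)$ and $x\mapsto\tau(x(1-c))/\tau(1-c)$ are again $\Gamma$-invariant and hence lie in $\Delta$. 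With $\tau(p_k)=1/n$, ampleness, and $\partial\Delta$ compact all in hand, Theorem~\ref{Bauer-case-A-free} yields approximate divisibility of $(\mathrm{C}(X),\Delta)$, and Theorem~\ref{Gamma2SBP} then delivers the (SBP).

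I expect the main obstacle to be exactly the construction of these balanced, approximately invariant colourings of the F\o lner shapes over an \emph{arbitrary} amenable $\Gamma$. For $\Gamma=\Int^d$ one may take the shapes to be large boxes and colour by a slowly varying function of a single coordinate, so that left-translation by $s\in K$ recolours only a vanishing fraction of levels while keeping the $n$ classes equal in size; for a general amenable group the left-translation structure is less transparent, and I would instead route the shapes through the Ornstein--Weiss quasi-tiling machinery, as in the argument behind Theorem~9.4 of \cite{KS-comparison}, to obtain classes that are simultaneously nearly equidistributed and nearly $K$-invariant. The remaining points---replacing indicators of closed levels by subordinate continuous functions, and absorbing all boundary and $\eps$-errors in the ultralimit along $\omega$---are routine.
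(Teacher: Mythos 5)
Your proposal is correct and follows essentially the same route as the paper: use the (URP) together with F\o lner tilings of the amenable group to produce an approximately invariant partition of unity $p_1,\dots,p_n$ in $l^\infty(\mathrm{C}(X))/J_{2,\omega,\Delta}$ with $\tau(p_i)=1/n$, observe that approximate invariance gives ampleness (equivalently, membership in $A'$, which is how the paper phrases it via Theorem~\ref{Bauer-case}), and then conclude via Theorems~\ref{Bauer-case-A-free} and~\ref{Gamma2SBP} and the cited $\mathcal Z$-absorption result. The one technical step you flag as the main obstacle --- balanced, approximately invariant colourings of the tower shapes --- is exactly what the paper outsources to the exact tilings of amenable groups from \cite{DHZ-tiling} combined with Lemma~7.1 of \cite{LN-sr1}, so your Ornstein--Weiss/quasi-tiling plan is the intended mechanism.
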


\begin{proof}

Since $(X, \Gamma)$ has the property (URP), $\mathcal M_1(X, \Gamma)$ is canonically isomorphic to $\tr(A)$. Therefore $\tr(A)$ is a Bauer simplex.

Since $(X, \Gamma)$ has the (URP) and since any discrete amenable group has tilings by F{\o}lner sets (Theorem 4.3 of \cite{DHZ-tiling}), an argument using Lemma 7.1 of \cite{LN-sr1} shows that, for any $n \in \mathbb N$, there is a partition of unity $p_1, p_2, ..., p_n \in (l^\infty(D)/J_{2, \omega, D}) \cap A'$ such that 
\begin{equation*}
\tau(p_i) = \frac{1}{n},\quad \tau\in (\mathrm{T}(A))_\omega,\ i=1, ..., n.
\end{equation*}
Then, by Theorem \ref{Bauer-case-A-free} with $\Delta = \tr(A)|_D$ (or just Theorem \ref{Bauer-case}), one has that $(D, \tr(A)|_D)$ is approximately divisible, and by Theorem \ref{Gamma2SBP}, $(D, \tr(A)|_D)$ has the (SBP).

The second statement then follows from Theorem 5.4 of \cite{Lindenstrauss-Weiss-MD} and Theorem 4.8 of \cite{Niu-MD-Z-absorbing}.
\end{proof}

\begin{rem}
If $\abs{\Gamma} < \infty$ and $(X, \Gamma)$ is minimal, then $X$ must be a finite set, and hence $(X, \Gamma)$ has the (SBP). But the statement does not hold in general without minimality for a finite group $\Gamma$, as the action of the trivial group on $[0, 1]$ provides a counterexample.
\end{rem}


Since any free and minimal dynamical system $(X, \Int^d)$ has the (URP) and (COS) (Theorem 4.2 and Theorem 5.5 of \cite{Niu-MD-Zd}), one has the following corollary:
\begin{cor}
Let $(X, \Int^d)$ be a free and minimal topological dynamical system. If $\mathcal M_1(X, \Int^d)$ is a Bauer simplex, then $(X, \Int^d)$ has the (SBP) and the C*-algebra $\mathrm{C}(X) \rtimes \Int^d$ is $\mathcal Z$-absorbing.

\end{cor}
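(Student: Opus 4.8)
The plan is to obtain this corollary as the special case $\Gamma = \Int^d$ of Theorem \ref{Bauer-SBP}, so that essentially all that remains is to verify the hypotheses of that theorem in the present setting. The group $\Int^d$ is an infinite discrete amenable group, so the structural assumption on $\Gamma$ in Theorem \ref{Bauer-SBP} is satisfied automatically, and $\mathcal M_1(X, \Int^d)$ is assumed to be a Bauer simplex by hypothesis.

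First I would import the cited structural facts for $\Int^d$-systems: by Theorem 4.2 and Theorem 5.5 of \cite{Niu-MD-Zd}, every free and minimal dynamical system $(X, \Int^d)$ has both the (URP) and the (COS). This is the only external input, and it is quoted wholesale from the reference rather than reproven here; it is precisely what bridges the general group statement of Theorem \ref{Bauer-SBP} to the concrete case of $\Int^d$.

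With these in hand, the two assertions follow directly. For the small boundary property, since $(X, \Int^d)$ has the (URP) and $\mathcal M_1(X, \Int^d)$ is a Bauer simplex, the first conclusion of Theorem \ref{Bauer-SBP} yields that $(X, \Int^d)$ has the (SBP). For $\mathcal Z$-absorption, since $(X, \Int^d)$ is in addition free and minimal and enjoys both the (URP) and the (COS), the second conclusion of Theorem \ref{Bauer-SBP} gives $\mathrm{C}(X) \rtimes \Int^d \cong (\mathrm{C}(X)\rtimes\Int^d)\otimes\mathcal Z$.

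I expect no genuine obstacle in this last step. The substantive work has already been carried out upstream: that the Bauer property forces approximate divisibility of the canonical pair (via Theorem \ref{Bauer-case-A-free}), that approximate divisibility implies the (SBP) (via Theorem \ref{Gamma2SBP}), and that the (SBP) together with the (URP) and (COS) delivers $\mathcal Z$-absorption, are all packaged inside Theorem \ref{Bauer-SBP}. The sole point to confirm is that the hypotheses specialize correctly to $\Gamma = \Int^d$, and this reduces entirely to the citation of \cite{Niu-MD-Zd}; accordingly the proof is a short verification rather than a new argument.
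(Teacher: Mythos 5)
Your proposal is correct and matches the paper's own argument exactly: the corollary is obtained by citing Theorems 4.2 and 5.5 of \cite{Niu-MD-Zd} for the (URP) and (COS) of free minimal $\Int^d$-systems and then specializing Theorem \ref{Bauer-SBP} to $\Gamma = \Int^d$. Nothing further is needed.
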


Now, let us consider $A$, an AH algebra with diagonal maps, which is an inductive limit
$$\xymatrix{
A_1 \ar[r] & A_2 \ar[r] & \cdots \ar[r] & A = \varinjlim A_n,
}$$
where $A_i = \bigoplus_{j} \mathrm{M}_{n_{i, j}}(\mathrm{C}(X_{i, j}))$, and the connecting maps have the form $$f \mapsto \mathrm{diag}\{f\circ\lambda_1, ..., f\circ\lambda_n\}.$$ 
Since the connecting maps preserve the diagonal subalgebras $D_i \subseteq A_i$, $i=1, 2, ...$, their inductive limit $D$ is a commutative subalgebra of $A$. Let us assume that $A$ has no finite-dimensional quotient. Then the matrix ranks $n_{i, j}$ are arbitrarily large as $i\to\infty$, and a standard argument shows that $A$ has the property that, for any $n \in \mathbb N$, any finite set $\mathcal F \subseteq A$, and any $\eps>0$, there are mutually orthogonal projections $p_1, ..., p_n \in D$ such that $p_1+\cdots + p_n = 1$, 
$$ \abs{\tau(p_i) - \frac{1}{n}} < \eps, \quad\mathrm{and}\quad \norm{[p_i, a]}_{2, \tau} < \eps,\quad i=1, ..., n,\ a\in\mathcal F,\ \tau \in\mathrm{T}(A). $$
In other words, the conditions of Theorem \ref{Bauer-case} are satisfied for the pair $(D, A)$.
 
\begin{thm}\label{Bauer-SBP-AH}
Let $A$ be a unital separable AH algebra with diagonal maps, and assume that $A$ has no finite-dimensional quotient. If $\tr(A)$ is a Bauer simplex, then $(D, \tr(A)|_D)$ has the (SBP). In particular, by Proposition \ref{AH-Z} below, the C*-algebra $A$ is $\mathcal Z$-absorbing if it is simple.
\end{thm}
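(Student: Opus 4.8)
The plan is to chain the structural input recorded just above the theorem (the existence of approximately central, approximately equidistributed projections inside $D$) with Theorem \ref{Bauer-case}, the approximate-divisibility notion of Definition \ref{Definition-WGamma}, and the implication Theorem \ref{Gamma2SBP}, and then to invoke Proposition \ref{AH-Z} for the $\mathcal Z$-absorption. First I would set up notation: since $D = \varinjlim D_i$ is a unital commutative sub-C*-algebra of $A$, write $D = \mathrm{C}(X)$, and observe that $\Delta := \tr(A)|_D$ is the continuous image of the compact set $\tr(A)$ under restriction, hence a closed (compact) set of probability Borel measures on $X$. Thus $(D, \Delta)$ is a legitimate pair in the sense of Definition \ref{defn-sbp}, eligible for Theorem \ref{Gamma2SBP}, and since $\tr(A)$ is a Bauer simplex we may apply Theorem \ref{Bauer-case}.

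The first real step is to pass from the local data to the sequence algebra. Fix $n\in\mathbb N$. Using the displayed property recalled just before the theorem --- for every finite $\mathcal F\subseteq A$ and $\eps>0$ there are mutually orthogonal projections $p_1,\dots,p_n\in D$ with $p_1+\cdots+p_n=1$, $\abs{\tau(p_i)-1/n}<\eps$, and $\norm{[p_i,a]}_{2,\tau}<\eps$ uniformly over $\tau\in\tr(A)$ --- I would run the standard reindexing argument along $\omega$, choosing finite sets $\mathcal F_k$ exhausting a dense subset of $A$ and $\eps_k\to 0$. The resulting projections assemble into elements $p_1,\dots,p_n\in \ell^\infty(D)/J_{2,\omega,\tr(A)|_D}$ forming a partition of unity. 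Because the commutator estimate is \emph{uniform} in $\tau$, and because the inclusion $\ell^\infty(D)/J_{2,\omega,\tr(A)|_D}\hookrightarrow \ell^\infty(A)/J_{2,\omega,\tr(A)}$ is isometric for the relevant $2$-norms, these $p_i$ lie in $A'$ (with $A$ viewed as constant sequences); and the trace estimate upgrades to the exact equality $\tau(p_i)=1/n$ for all $\tau\in\tr(A)_\omega$. This is precisely the input of Theorem \ref{Bauer-case}.

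Applying Theorem \ref{Bauer-case} to this partition of unity yields $\tau(p_iap_i)=\frac1n\tau(a)$ for all $a\in A$, $\tau\in\tr(A)_\omega$, and $i=1,\dots,n$. Restricting $a$ to $D^+$ and noting that every $\tau\in\Delta_\omega=(\tr(A)|_D)_\omega$ is the restriction to $\ell^\infty(D)$ of some $\tilde\tau\in\tr(A)_\omega$, this equality (with constant $K=1$) is exactly the statement that $(D,\Delta)$ is approximately divisible in the sense of Definition \ref{Definition-WGamma}. Theorem \ref{Gamma2SBP} then gives that $(D,\tr(A)|_D)$ has the (SBP). Finally, when $A$ is simple, Proposition \ref{AH-Z} converts this (SBP) into $\mathcal Z$-absorption, $A\cong A\otimes\mathcal Z$.

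The genuinely substantial content lives in results external to this chaining: the existence of the local projections (which rests on the AH structure with unbounded matrix ranks --- the hypothesis of no finite-dimensional quotient entering exactly here) and the comparison/Bauer mechanisms packaged in Theorems \ref{Bauer-case} and \ref{Gamma2SBP}. Within the argument itself, the one point requiring care, and the step I expect to be the main obstacle, is the reindexing: one must guarantee that the limit elements are honest projections lying in $\ell^\infty(D)/J_{2,\omega,\tr(A)|_D}$ (so that the hypothesis of Theorem \ref{Bauer-case} is met) and that approximate centrality holds in the uniform $2$-norm over \emph{all} of $\tr(A)$ rather than merely tracewise --- it is precisely the uniformity in the recalled local statement that forces $p_i\in A'$ after passing to the limit.
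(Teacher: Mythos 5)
Your proposal is correct and follows essentially the same route as the paper: the local divisibility data recorded before the theorem is reindexed into a partition of unity in the sequence algebra, the Bauer hypothesis upgrades it to exact tracial equidistribution under cutting (you via Theorem \ref{Bauer-case}, the paper via Theorem \ref{Bauer-case-A-free} together with the canonical identification $\tr(A)\cong\tr(A)|_D$ — a variant the paper itself notes is interchangeable), and then Theorem \ref{Gamma2SBP} and Proposition \ref{AH-Z} finish the argument. Your more detailed treatment of the reindexing step is exactly the ``standard argument'' the paper leaves implicit.
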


\begin{proof}
Note that (as $A$ is AH) $\tr(A)$ is canonically isomorphic to $\tr(A)|_D$. Since $\tr(A)$ is a Bauer simplex, by Theorem \ref{Bauer-case-A-free}, $(D, \tr(A)|_D)$ is approximately divisible, and so by Theorem \ref{Gamma2SBP}, $(D, \tr(A)|_D)$ has the (SBP).
\end{proof}

Just like the transformation group C*-algebra $\mathrm{C}(X)\rtimes\Gamma$ (with the (URP) and (COS)) (see \cite{EN-MD0} for $\Int$-actions, \cite{Niu-MD-Z} and \cite{Niu-MD-Z-absorbing} for the general case), an AH algebra with diagonal maps which satisfies the (SBP) is also $\mathcal Z$-absorbing:
\begin{prop}\label{AH-Z}
Let $A = \varinjlim A_n$ be a simple unital separable AH algebra with diagonal maps. Denote by $D\subseteq A$ the standard diagonal sub-C*-algebra. If $(D, \mathrm{T}(A)|_D)$ has the (SBP), then $A$ locally has slow dimension growth. That is, for any finite set $\mathcal F \subseteq A$ and any $\eps>0$, there is a unital sub-C*-algebra $C \subseteq A$ such that $\mathcal F \subseteq_\eps C$, $C \cong \bigoplus_{j=1}^d\mathrm{M}_{m_j}(\mathrm{C}(Y_j))$, and $$\frac{\mathrm{dim}(Y_j)}{m_j} < \eps,\quad j=1, ..., d.$$
In particular, $A$ is $\mathcal Z$-absorbing.
\end{prop}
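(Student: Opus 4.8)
The plan is to reduce the proposition to its genuinely new content, the implication that the (SBP) of $(D,\mathrm{T}(A)|_D)$ forces $A$ to locally have slow dimension growth; the final clause ``$A$ is $\mathcal Z$-absorbing'' then follows from the known regularity theory of simple unital AH algebras, for which local slow dimension growth is already known to imply $\mathcal Z$-stability. For the main implication I would follow the blueprint of the crossed-product argument of \cite{Niu-MD-Z-absorbing}, with the multiplicities of the diagonal connecting maps playing the role of Rokhlin-tower heights and the (SBP) playing its usual role of keeping the ``base'' low-dimensional.

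First I would localize: given $\mathcal F$ and $\eps$, choose $n$ with $\mathcal F\subseteq_{\eps/4}A_n$, and since the desired subalgebra may be built summand-by-summand it suffices to treat a single building block $\mathrm M_r(\mathrm C(X))$ of $A_n$, with $X=X_{n,j}$ of some fixed dimension $d$. Passing far out to $A_N$, the image of this block inside a summand $\mathrm M_R(\mathrm C(Y))$ of $A_N$ has the block-diagonal form $f\mapsto\mathrm{diag}(f\circ\lambda_1,\dots,f\circ\lambda_k)$ with $R=rk$, and since $A$ is simple the multiplicity $k$ can be taken as large as we please. The quantity $k$ is what will sit in the denominator of the dimension ratio, so driving it to infinity is the source of the required smallness.

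The heart of the matter is to build, inside $\mathrm M_R(\mathrm C(Y))$, a homogeneous subalgebra $\mathrm M_m(\mathrm C(Y'))$ with $m$ of order $k$ and $\dim Y'$ bounded in terms of $d$ (not of $N$), which approximately absorbs $\mathrm{diag}(f\circ\lambda_1,\dots,f\circ\lambda_k)$. This is a dimension-drop construction in the spirit of Gong: the individual point-evaluation blocks $f\circ\lambda_s$ are spliced together, through homotopies realized by matrix units of $\mathrm M_R(\mathrm C(Y))$, into a single amplified block over a space $Y'$ whose dimension stays near $d$. I would use Theorem \ref{SBP-2-norm}, applied to the diagonal functions determined by $\mathcal F$, to perturb the data so that the exceptional set on which this splicing fails (where the evaluations ``collide'' and the dimension cannot be dropped) has, in every trace of $\Delta=\mathrm{T}(A)|_D$, arbitrarily small measure and small boundary; the approximately central diagonal projections available here (the partitions of unity discussed just before Theorem \ref{Bauer-SBP-AH}) supply the approximately central partition of unity underlying the amplification.

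The step I expect to be the main obstacle is precisely the passage from the measure-theoretic smallness furnished by the (SBP) to a norm-level statement: the exceptional ``collision'' set has small trace but not small norm, and its contribution to $\mathcal F$ must be genuinely absorbed into the low-dimensional subalgebra rather than merely ignored. This is where a Cuntz-comparison argument is needed, in the role played by the property (COS) in \cite{Niu-MD-Z-absorbing}: the exceptional corner must be dominated by a corner of the subalgebra $C$ so that the offending matrix entries can be conjugated in with norm error at most $\eps$. Assembling the resulting blocks over the summands of $A_n$ yields $C=\bigoplus_j\mathrm M_{m_j}(\mathrm C(Y_j))$ with $\mathcal F\subseteq_\eps C$ and $\dim(Y_j)/m_j<\eps$, establishing local slow dimension growth, whence $A\cong A\otimes\mathcal Z$.
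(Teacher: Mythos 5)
There is a genuine gap, and it sits exactly where you flag it yourself: the ``main obstacle'' of converting tracial smallness into a norm approximation is deferred to ``a Cuntz-comparison argument \ldots in the role played by (COS)''. No comparison property is available at this point --- strict comparison for $A$ is essentially a consequence of the $\mathcal Z$-stability one is trying to prove, and nothing like the (COS) has been established for AH algebras with diagonal maps --- so as written the argument is circular or at best incomplete at its central step. Moreover the step is not actually needed: the paper's proof never absorbs an exceptional corner by comparison. Its mechanism is different. Using the (SBP) (via the construction of Lemma \ref{unity-sbp}), one first replaces $\mathcal F_0\subseteq D\cong\mathrm{C}(X)$ by a partition of unity $\phi_1,\dots,\phi_L$ whose transition sets $\phi_i^{-1}(0,1)$ have measure $<\eps/L$ for \emph{every} $\tau\in\mathrm{T}(A)$. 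Then $C$ is taken to be the C*-algebra generated, inside a single building block $A_k\cong\bigoplus_j\mathrm{M}_{n_j}(\mathrm{C}(Z_j))$, by the matrix units of $A_k$ together with the functions $\theta_{\eps,1-\eps}((\phi_i'-\eps)_+)$, where $\phi_i'\in D_k$ are norm-$\eps/4$ approximants of the $\phi_i$. The inclusion $\mathcal F\subseteq_{3\eps}C$ is then an elementary functional-calculus estimate valid pointwise everywhere, including on the exceptional set; the tracial smallness is used \emph{only} to bound the dimension of the spectrum of $C$: at each point of $Z_j$ at most $2\eps n_j$ of the diagonal eigenvalue entries lie in the transition region, whence (by the argument of Theorem 4.5 of \cite{EN-MD0}) $C\cong\bigoplus_j\mathrm{M}_{n_j}(\mathrm{C}(Z_j'))$ with $\dim(Z_j')\le 2\eps\,n_j$.

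A second, related misdirection in your sketch is the source of the small ratio $\dim(Y_j)/m_j$. You propose to drive the multiplicity $k$ of the connecting maps to infinity and splice the point evaluations together in the style of Gong's decomposition theorem; that is a much harder road (Gong's theorem needs delicate spectral-distribution control) and it is not what happens here. In the paper the denominator is just the matrix size $n_j$ of one fixed building block, and the ratio is $<2\eps$ because the numerator is bounded by $2\eps n_j$ --- the smallness comes entirely from the tracial smallness of the boundary sets supplied by the (SBP), not from large multiplicities or from approximately central partitions of unity (which enter only in Theorem \ref{Bauer-SBP-AH}, to \emph{verify} the (SBP), not in the present proposition). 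Your reduction of the final clause to known regularity results for simple AH algebras of local slow dimension growth is fine and matches the paper.
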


\begin{proof}
The proof is similar to the proof of Theorem 4.5 of \cite{EN-MD0}. Let $\eps>0$ be arbitrary, and let $\mathcal F \subseteq  A$ be a finite subset. Without loss of generality, one may assume that $\mathcal F = \mathcal F_0 \cup \mathcal F_1$ with $\mathcal F_0 \subseteq D\cong \mathrm{C}(X)$ and $\mathcal F_1$ consisting  of finitely many matrix units of a building block. On passing to a finite open cover of $X$ and using the (SBP), the argument of the proof of Lemma \ref{unity-sbp} shows that one may assume further that $$\mathcal F_0 = \{\phi_1, ..., \phi_L\},$$
where $\phi_1, ..., \phi_L$ is a partition of unity of $X$ with the property
$$\mu_\tau(\phi_i^{-1}(0, 1)) < \frac{\eps}{L}, \quad i=1, ..., L,\ \tau \in \mathrm{T}(A).$$
Then there is a positive contraction $g\in \mathrm{C}(X)$ such that 
\begin{equation}\label{small-ring}
\tau(g) < \eps \quad \mathrm{and} \quad g \chi_{\eps/4, 1-\eps/4}(\phi_i) = \chi_{\eps/4, 1-\eps/4}(\phi_i),\quad i=1, ..., L,\ \tau \in \mathrm{T}(A),
\end{equation}
where $\chi_{\eps/4, 1-\eps/4}$ is a positive function vanishing outside $(\eps/4, 1-\eps/4)$, but is $1$ on $[\eps/2, 1-\eps/2]$.

With a sufficiently large $k$, there are positive contractions $g' \in D_k$ and $\phi_i \in D_k$, $i=1, ..., L$, such that
\begin{equation}\label{small-ring-local}
\norm{g - g'}_\infty < \eps/4 \quad \mathrm{and} \quad \norm{\phi'_i - \phi_i}_\infty < \eps/4,\quad i=1, ..., L.
\end{equation}
Without loss of generality, one may assume that
\begin{equation}\label{small-g'}
\tau(g') < \eps,\quad \tau\in\mathrm{T}(A_k),
\end{equation}
and
$$\mathcal F_1 \subseteq A_k.$$
Then, by \eqref{small-ring}, \eqref{small-ring-local}, and the fact that $\phi_i, \phi_i', g, g'$, $i=1, ..., L$, commute (they all belong to $D$), one has 
\begin{equation}\label{local-1}
\chi_{\eps, 1-\eps}((\phi'_i-\eps)_+)h_{1-\eps/4}(g') = \chi_{\eps, 1-\eps}((\phi'_i-\eps)_+), \quad i=1, ..., L,
\end{equation}
where $h_{1-\eps/4}$ is the continuous function which is $1$ on $1-\eps/4$, $0$ at $0$, and linear in between.
Also note that
\begin{equation}\label{pertu-g'}
\norm{h_{1-\eps/4}(g') - g'}_\infty < \eps/4.
\end{equation}

Then consider 
$$\theta_{\eps, 1-\eps}((\phi_i'-\eps)_+),\quad i=1, ..., L,$$
where $\theta$ is the positive function which is $0$ on $(0, \eps)$, $1$ on $(1-\eps, \infty)$, and linear in between.
By \eqref{local-1}, \eqref{small-g'}, and \eqref{pertu-g'}, 
\begin{equation}\label{pert-small-bd}
\mu_\tau(\bigcup_{i=1}^L(\theta_{\eps, 1-\eps}((\phi_i'-\eps)_+))^{-1}(0, 1)) < \tau(h_{1-\eps/4}(g')) < 2\eps,\quad \tau \in \mathrm{T}(A_k).
\end{equation}

Now, write $A_k \cong \bigoplus_{j=1}^d \mathrm{M}_{n_j}(\mathrm{C}(Z_j))$. For each $j=1, ..., d$, denote by $V_j$ the  set of standard matrix units of $\mathrm{M}_{n_j}(\Comp) \subseteq A_k$. Consider the C*-algebra
$$C:=\textrm{C*}\{ \theta_{\eps, 1-\eps}((\phi_i'-\eps)_+)), V_j: i=1, ..., L,\ j=1, ..., d\} \subseteq A_k.$$ Note that $\mathcal F_0 \subseteq_{3\eps} C$.

Then, by \eqref{pert-small-bd}, an argument similar to the proof of Theorem 4.5 of \cite{EN-MD0} shows that
$$C \cong \bigoplus_{j=1}^d \mathrm{M}_{n_j}(\mathrm{C}(Z'_j))$$
where $Z_j'$, $j=1, ..., d$ is a metrizable compact space with $\mathrm{dim}(Z'_j) \leq (2\eps) n_j$. Since $\mathcal F_0 \subseteq_{3\eps} C$ and $\mathcal F_1 \subseteq A_k$, one has that $\mathcal F \subseteq_{3\eps} C$. Since $\mathcal F$ and $\eps$ are arbitrary, this implies that $A$ locally has slow dimension growth, as desired.
\end{proof}

\bibliographystyle{plainurl}

\end{document}